\setlist[enumerate,1]{label=(\arabic*), ref=(\arabic*)}
\setlist[enumerate,3]{label=(\roman*), ref=(\roman*)}
\theoremstyle{plain}
\newtheorem{theorem}{Theorem}[section]
\newtheorem{lemma}[theorem]{Lemma}
\newtheorem{corollary}[theorem]{Corollary}
\newtheorem{proposition}[theorem]{Proposition}
\newtheorem{observation}[theorem]{Observation}
\newtheorem{conjecture}[theorem]{Conjecture}
\newtheorem{question}[theorem]{Question}
\newtheorem{claim}{Claim}[theorem]
\newtheorem*{claim*}{Claim}
\newenvironment{claimproof}[1][Proof]{\par
	\pushQED{\qed}%
	
	\normalfont \topsep6\p@\@plus6\p@\relax
	\trivlist
	\item[\hskip\labelsep
	\textit{#1}\@addpunct{.}~]\ignorespaces
}{%
	\popQED\endtrivlist\@endpefalse
}
\newlist{Cases}{enumerate}{3}
\setlist[Cases]{parsep=0pt plus 1pt}
\setlist[Cases,1]{wide=0pt, listparindent=\parindent,
    label = \textbf{Case~\arabic*:}, ref = \arabic*}
\setlist[Cases,2]{wide=\parindent, listparindent=\parindent,
    label = \textbf{Case~\arabic{Casesi}-\arabic{Casesii}:}}
\crefname{Casesi}{case}{cases}
\newcounter{case}
\crefname{case}{case}{cases}
\theoremstyle{definition}
\newtheorem{definition}[theorem]{Definition}
\newcommand{\calC}{\mathcal{C}}
\newcommand{\calF}{\mathcal{F}}
\newcommand{\calH}{\mathcal{H}}
\newcommand{\calP}{\mathcal{P}}
\newcommand{\calT}{\mathcal{T}}
\newcommand{\ve}{\varepsilon}
\NewDocumentCommand{\xsideset}{mmme{_^}}{%
  \mathop{%
    \settowidth{\dimen0}{$\m@th\displaystyle#3$}%
    \dimen0=.5\dimen0
    \settowidth{\dimen2}{$%
      \m@th\displaystyle#3%
      \IfValueT{#4}{_{#4}}%
      \IfValueT{#5}{^{#5}}%
    $}%
    \dimen2=.5\dimen2
    \advance\dimen2 -\dimen0
    \sbox6{\scriptspace\z@$\displaystyle{\vphantom{#3}}#1$}
    \sbox8{\scriptspace\z@$\displaystyle{\vphantom{#3}}#2$}
    \ifdim\wd6>\dimen2 \kern\dimexpr\wd6-\dimen2\relax\fi
    {%
     \mathop{\llap{\copy6}{\displaystyle#3}\rlap{\copy8}}\limits
     \IfValueT{#4}{_{#4}}%
     \IfValueT{#5}{^{#5}}%
    }%
    \ifdim\wd8>\dimen2 \kern\dimexpr\wd8-\dimen2\relax\fi
  }%
}
\newcommand{\defeq}{\coloneqq}
\let\originalleft\left
\let\originalright\right
\renewcommand{\left}{\mathopen{}\mathclose\bgroup\originalleft}
\renewcommand{\right}{\aftergroup\egroup\originalright}
\title{Towards a high-dimensional Dirac's theorem}
\author{
Hyunwoo Lee%
        \thanks{Department of Mathematical Sciences, KAIST, South Korea and Extremal Combinatorics and Probability Group (ECOPRO), Institute for Basic Science (IBS).
        E-mail: {\ttfamily hyunwoo.lee@kaist.ac.kr.} Supported by the National Research Foundation of Korea (NRF) grant funded by the Korea government(MSIT) No. RS-2023-00210430, and the Institute for Basic Science (IBS-R029-C4).}
}
\begin{document}
\maketitle

\begin{abstract}
    Dirac's theorem determines the sharp minimum degree threshold for graphs to contain perfect matchings and Hamiltonian cycles. There have been various attempts to generalize this theorem to hypergraphs with larger uniformity by considering hypergraph matchings and Hamiltonian cycles. 
    In this paper, we consider another natural generalization of perfect matchings, Steiner triple systems. As a Steiner triple system can be viewed as a partition of pairs of vertices, it is a natural high-dimensional analogue of a perfect matching in graphs.
    We prove that for sufficiently large integer $n$ with $n \equiv 1 \text{ or } 3 \pmod{6}$, any $n$-vertex $3$-uniform hypergraph $H$ with minimum codegree at least $\left(\frac{3 + \sqrt{57}}{12} + o(1) \right)n = (0.879... + o(1))n$ contains a Steiner triple system. In fact, we prove a stronger statement by considering transversal Steiner triple systems in a collection of hypergraphs. 
    We conjecture that the number $\frac{3 + \sqrt{57}}{12}$ can be replaced with $\frac{3}{4}$ which would provide an asymptotically tight high-dimensional generalization of Dirac's theorem.
    
\end{abstract}


\section{Introduction}\label{sec:intro}

High-dimensional combinatorics initiated by Linial and other researchers have recently drawn much attention (see~\cite{linial2018challenges}) considering high-dimensional generalizations of many classical results. For example, a classical theorem of Dirac~\cite{dirac1952some} asserts that an $n$-vertex graph $G$ with the minimum degree $\delta(G) \geq \frac{n}{2}$ contains a perfect matching provided that $n$ is even. This theorem has been generalized to hypergraphs in several ways regarding the existence of hypergraph perfect matching, for instance, in~\cite{alon2012large,han2009perfect,lu2023co,rodl2009perfect,treglown2012exact,you2023minimum}.
While perfect matchings in hypergraphs provide a partition of $0$-dimensional objects (vertices), it is also very natural to consider a partition of higher-dimensional objects. In fact, Linial~\cite{linial2018challenges} mentioned that a Steiner triple system can be considered as a $2$-dimensional generalization of a graph perfect matching. This poses the following very natural question.

\begin{question}\label{ques:mincodegree}
    What is the minimum codegree condition on a $3$-uniform hypergraph $H$ that guarantees $H$ contains a Steiner triple system as a subgraph?     
\end{question}


\subsection{Combinatorial designs}\label{subsec:designs}

What is a Steiner triple system? A design with parameters $(n, q, r, \lambda)$ is a collection of $q$-element subsets of $n$ vertices such that every set of $r$ vertices is contained in exactly $\lambda$ subsets. We say that $(n, q, r, 1)$-designs are Steiner systems, and especially, $(n, 3, 2, 1)$-designs are called Steiner triple system. We will abbreviate `Steiner triple system' as $STS$. 
For a design with given parameters to exist, the divisibility condition on $n$ should be equipped. It is easy to see that an $(n, q, r, \lambda)$-design exists only if $\binom{q-i}{r-i}$ divides $\lambda \binom{n-i}{r-i}$ for every $0\leq i \leq r-1$. This natural condition is called the `divisibility condition'. The divisibility condition for $STS$ is $n \equiv 1 \text{ or }3 \pmod{6}$. Indeed, this is also a sufficient condition for the existence of the $STS$. Kirkman (1846) and independently Steiner (1853) proved that for every $n$ satisfying the divisibility condition, there is a $STS$ on $n$ vertices. This raises the natural question of whether $(n, q, r, \lambda)$-designs exist for all types $(n, q, r, \lambda)$ satisfying the divisibility condition and when $n$ is sufficiently large. This question is called the existence conjecture and it had been unsolved for over a hundred years until Keevash's resolution~\cite{keevash2014existence}.

The existence of Steiner systems when $r = 2$ was solved by Wilson~\cite{wilson1972existence1, wilson1972existence2, wilson1975existence3}. Wilson's theorem can be viewed as an edge decomposition of a complete graph. For a graph $F$, every sufficiently large $n$ with suitable divisibility condition, the edges of a complete graph $K_n$ can be decomposed into isomorphic copies of $F$. This decomposition is called an $F$-decomposition. More generally, the existence of designs is equivalent to the existence of hypergraph edge decomposition problems. An $(n, q, r, \lambda)$-design provides an edge decomposition of $n$-vertex $r$-uniform complete multi-hypergraph with all edge multiplicity $\lambda$ into copies of $r$-uniform complete hypergraph $K^{(r)}_q$ on $q$ vertices.

Keevash~\cite{keevash2014existence} resolved the existence conjecture by proving that for given parameters $q, r, \lambda$ and sufficiently large $n$ equipped with the divisibility condition, every hypergraph $K_n^{(r)}$ with multiplicity $\lambda$ admits an edge decomposition into $K_q^{(r)}$. These were generalized to $F$-decompositions for any $r$-uniform hypergraph $F$ by Glock, K\"{u}hn, Lo and Osthus~\cite{glock2023existence} via iterative absorption method.

Note that both results in~\cite{glock2023existence,keevash2014existence} can be generalized to show that near-complete hypergraphs (hypergraphs with almost maximum possible minimum codegree) also admit such decompositions as long as an appropriate ``$F$-divisibility condition" is met.
In~\cite{keevash2018existence}, Keevash further proved stronger decomposition theorems, proving that for every sufficiently large $n$ equipped with a suitable divisibility condition, $K_n^q$ can be decomposed into $(n, q, r, \lambda)$-designs.
Again this theorem can be generalized to near-regular hypergraphs if some additional codegree-regularity condition is given. Then it is also very natural to ask about the minimum codegree threshold for the existence of $(n, q, r, \lambda)$-design in $n$-vertex $q$-uniform hypergraph, in particular, \Cref{ques:mincodegree}. Note that a hypergraph matching in $3$-uniform hypergraph is a $(n, 3, 1, 1)$-design and $STS$ is an $(n, 3, 2, 1)$-design.

We prove the following theorem which provides a good upper bound on the codegree threshold that we are interested in.

\begin{theorem}\label{thm:simple-main}
    For any positive real number $\ve > 0$, there is $n_0 = n_0(\ve)$ that satisfies the following for all $n \geq n_0$ satisfying $n\equiv 1\text{ or } 3 \pmod{6}$.
    If $H$ is an $n$-vertex $3$-uniform hypergraph where the minimum codegree of $H$ is at least $\left(\frac{3 + \sqrt{57}}{12} + \ve \right)n$, then $H$ contains a $STS$.
\end{theorem}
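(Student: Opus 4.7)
My overall approach is the absorption method, which has become the standard technique for extremal problems about spanning designs. The plan has three stages: (i) build an absorbing partial Steiner triple system, (ii) cover almost all pairs outside the absorber by an iterative cover-down argument, and (iii) use the absorber to finish the leftover pairs. The transversal strengthening promised in the abstract is the natural vehicle for this, since having one hypergraph per triple slot decouples the random choices in stage (ii) and makes the local switching in stage (i) substantially cleaner.

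For stage (i), I would choose a small random reservoir $R \subseteq V(H)$ with $|R| = o(n)$ and, inside $R$, construct an absorbing partial STS $A$. The defining property of $A$ should be flexibility: for any admissible collection of ``leftover'' pairs $L$ outside $R$, one can locally rewire triples of $A$, using switching gadgets in the style of Pasch trades, so that the rewired $A$ together with a small completion covers every pair of $R \cup V(L)$ exactly once. The existence of enough such gadgets rooted at a prescribed pair should follow from the codegree hypothesis by a supersaturation count.

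For stage (ii), I would apply an iterative cover-down procedure in the spirit of Keevash~\cite{keevash2014existence} and Glock--K\"{u}hn--Lo--Osthus~\cite{glock2023existence}. Those results already yield decompositions in near-complete hypergraphs, so the job here is to repeatedly extract near-random partial STSs and pass to a residual hypergraph whose codegree ratio tends to $1$, all while leaving the reservoir $R$ untouched and preserving the pseudorandom conditions needed at each step. After sufficiently many rounds the residual is near-complete and can be decomposed directly; the uncovered pairs all involve $R$ and are handed to the absorber in stage (iii).

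The main obstacle, and the step that pins down the threshold, is stage (i). The number $\alpha = (3+\sqrt{57})/12$ is the positive root of $6\alpha^2 - 3\alpha - 2 = 0$, equivalently $\alpha = \tfrac{1}{2} + \tfrac{1}{3\alpha}$. I expect this to arise from counting suitable switching gadgets: starting from a target pair $\{u,v\}$ with $|N_H(u,v)| \geq \alpha n$, one must locate auxiliary vertices in a common codegree neighborhood $N_H(u,v) \cap N_H(u',v')$ of size at least $(2\alpha - 1)n$, and then inside that restricted set find further vertices with still more codegree constraints. Multiplying these densities produces an $\alpha(2\alpha - 1)n^{2}$-type lower bound on the number of available gadgets, whose positivity is precisely the inequality $6\alpha^2 - 3\alpha - 2 > 0$. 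Making this count robust and \emph{uniform} over every possible leftover $L$, so as to produce a single absorber that works simultaneously for all admissible $L$, will be the most delicate part of the argument; the conjectured $3/4$ threshold would correspond to a more efficient gadget whose construction is presently out of reach.
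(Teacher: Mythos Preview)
Your three-stage outline is the right architecture, and you have correctly identified the quadratic $6\alpha^{2}-3\alpha-2=0$ whose positive root is $\tfrac{3+\sqrt{57}}{12}$. But you have placed the bottleneck in the wrong stage. In the paper the constant does \emph{not} come from counting switching gadgets for the absorber; it comes from the \emph{fractional} problem. Via a max-flow min-cut argument on a flow network built from rooted copies of $K_{4}^{(3)}$, the paper shows that essential minimum codegree at least $\tfrac{3+\sqrt{57}}{12}\,n$ forces a perfect fractional STS, and the inequality $6\delta^{2}-3\delta-2>0$ is exactly what makes that flow argument go through. This fractional STS is then the engine of the cover-down step: it is converted into a highly pseudorandom weighted auxiliary hypergraph to which a quantitative Pippenger-type matching theorem applies, yielding an approximate STS with extremely small leftover. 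Your stage (ii) misses this entirely: one cannot ``pass to a residual whose codegree ratio tends to $1$'' by extracting partial systems, and the near-complete decomposition results you cite do not apply at codegree $\approx 0.88n$. The fractional relaxation is the missing idea.

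Your absorber is also not of the right kind. The leftover after cover-down is a $K_{3}$-divisible \emph{graph} (a set of pairs, not of vertices), and a random reservoir $R$ of size $o(n)$ cannot absorb a leftover whose vertex support may be all of $V$. The paper instead runs \emph{iterative} absorption along a vortex $U_{0}\supseteq\cdots\supseteq U_{\ell}$ with $|U_{\ell}|$ a fixed constant; repeated applications of the cover-down lemma confine the leftover to $\binom{U_{\ell}}{2}$, and for every $K_{3}$-divisible graph $T$ on $U_{\ell}$ an exclusive absorber $A_{T}$ of bounded $T$-rooted edge-degeneracy (so that both $A_{T}^{(2)}$ and $A_{T}^{(2)}\setminus T$ have $K_{3}$-decompositions inside $A_{T}$) is greedily embedded in advance. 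That absorber construction needs only codegree a bit above $\tfrac{3}{4}$ and imposes no constraint near $0.88$, so the threshold genuinely lives in the cover-down stage, not in the absorption.
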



\subsection{Transversals}\label{subsec:transversal}

Indeed, our main result (\Cref{thm:main}) is stronger than \Cref{thm:simple-main} as it provides a transversal copy of $STS$ for a given collection of hypergraphs. In order to properly state our main theorem, we introduce the concept of transversals over a hypergraph collection.

Let $\calF$ be the collection of sets $\{F_1, \dots, F_m\}$. We say a set $X = \{x_1, \dots, x_m\}$ is $\calF$-transversal if $F_i$ contains $x_i$ for each $i\in [m]$. The existence of transversals in various mathematical objects has been extensively studied in the various literature of mathematics. For example, in discrete geometry, many variations of Carath\'{e}odory's theorem~\cite{barany1982generalization,kalai2009colorful} and colorful Helly's theorem~\cite{kalai2005topological} were considered. For Matroid theory, Rota's basis conjecture~\cite{huang1994relations,pokrovskiy2020rota} concerns the transversals in the matroid basis. In design theory, many problems were studied related to Ryser's conjecture on transversals in Latin squares~\cite{keevash2022new,gould2023hamilton}. 

Recently, in (hyper)graph theory, finding a sufficient condition of the existence of transversals over a collection of (hyper)graphs was actively investigated (for instance, see ~\cite{joos2020rainbow,chakraborti2023bandwidth,cheng2023transversals,keevash2007rainbow,chakraborti2023hamilton,you2023minimum,aharoni2018rainbow}). 
Indeed, the notion of transversals was implicitly used in numerous literature even before its explicit introduction. The explicit notion of graph transversals was first defined in~\cite{joos2020rainbow} and extended to directed graphs and hypergraphs in~\cite{chakraborti2023hamilton} as follows.

\begin{definition}
    Let $\calF = \{F_1, \dots F_m\}$ be a collection of graphs/hypergraphs/digraphs on the common vertex set $V$. For a graph/hypergraph/digraph $G$, we say $G$ is $\calF$-transversal if there is a bijective function $\phi : E(G) \to [m]$ such that for every $e\in E(G)$, we have $e\in E(F_{\phi(e)})$. 
\end{definition}

As we are interested in $STS$ in this article, we only consider a collection of $3$-uniform hypergraphs throughout the paper.


\subsection{Fractional relaxations}\label{subsec:fractional-relaxation}

As we mentioned in \Cref{subsec:designs}, the existence of $STS$ is equivalent to an edge-decomposition of $K_n$ into triangles. For a graph $G$, we say $G$ is \emph{$K_3$-divisible} if all the vertices of $G$ have even degrees and $|E(G)|$ is divisible by $3$. 
The famous conjecture of Nash-Williams~\cite{nash1970unsolved} states that for sufficiently large $n$, if a given $K_3$-divisible graph $G$ on $n$ vertices has minimum degree at least $\frac{3}{4}n$, then $G$ can be edge-decomposed into triangles. While the Nash-Williams conjecture is widely open, the breakthrough result of Barber, K\"{u}hn, Lo, and Osthus~\cite{barber2016edge} essentially reduced the conjecture into a problem on fractional decomposition.

For the collection $\calT$ of triangles in a graph $G$, a \emph{fractional triangle-decomposition} of $G$ is a function $w: \calT \to [0, 1]$ such that $\sum_{e\in T\in \calT} W(T) = 1$ for every $e\in E(G)$. Let $\delta^*$ to be the infimum of $\delta \in [0, 1]$ that satisfies the following: there is $n_0$ such that for every graph $G$ on $n \geq n_0$ vertices with minimum degree at least $\delta n$ has a fractional triangle-decomposition. Barber, K\"{u}hn, Lo, and Osthus~\cite{barber2016edge} proved the following theorem stating that determination of $\delta^*$ would resolve an asymptotic form of Nash-Williams conjecture.

\begin{theorem}[Barber et al.~\cite{barber2016edge}]\label{thm:nash-williams}
    For any positive real number $\ve > 0$, there is $n_0 = n_0(\ve)$ that satisfies the following for all $n\geq n_0$.
    Let $G$ be a $K_3$-divisible $n$-vertex graph with minimun degree at least $\left(\max\{\frac{3}{4}, \delta^*\} + \ve \right)n$. Then $G$ has a triangle decomposition.
\end{theorem}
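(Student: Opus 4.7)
The plan is to adapt the iterative absorption framework developed by Barber, K\"{u}hn, Lo, and Osthus. The key conceptual point is to decompose the task into two parts: an \emph{absorber} that handles an arbitrary admissible leftover, and an \emph{approximate decomposition} procedure that reduces $G$ to such a leftover. The hypothesis $\delta(G) \geq (\max\{3/4,\delta^*\}+\varepsilon)n$ serves both purposes: the $\delta^*$ part guarantees, after restricting to smaller induced subgraphs, the existence of fractional triangle decompositions (which can be rounded to integral near-decompositions via a R\"{o}dl-nibble / semi-random argument), while the $3/4$ part ensures every edge sits in many triangles even after restriction, which is precisely what makes absorbers and cover-down steps work.

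Concretely, I would first build a random \emph{vortex}: a nested sequence $V(G)=U_0\supset U_1\supset\dots\supset U_\ell$ with $|U_{i+1}|=\mu|U_i|$ for small $\mu>0$, chosen so that with positive probability each $G[U_i]$ inherits minimum degree approximately $(\max\{3/4,\delta^*\}+\varepsilon/2)|U_i|$, and boundary edges between $U_i\setminus U_{i+1}$ and $U_{i+1}$ have many common-neighbour extensions inside $G[U_i]$; standard concentration handles this. Inside the innermost set $U_\ell$ I would then construct an absorber $A$: a small subgraph such that for \emph{any} $K_3$-divisible graph $L$ supported on $U_\ell$ and edge-disjoint from $A$, the union $A\cup L$ admits a triangle decomposition. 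Such absorbers are assembled from many edge-disjoint "transformer" gadgets that locally swap a designated edge for another via short chains of triangles; the existence of each gadget requires every pair of vertices to share many common neighbours, which is exactly what $\delta(G)\geq(3/4+\varepsilon)n$ delivers.

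Next, processing the vortex from the outermost level inward, at each level $i$ I would run a two-step procedure: (i) apply the nibble to the fractional triangle decomposition of $G[U_i]$ to obtain a near-perfect integral packing, leaving only an $o(|U_i|^2)$ remainder spread quasi-uniformly; (ii) execute a \emph{cover-down}, a careful greedy triangle packing that removes every remaining edge incident to $U_i\setminus U_{i+1}$, using the high codegree hypothesis to guarantee, at each step, a triangle through the given boundary edge whose third vertex lies in $U_{i+1}$. After $\ell$ iterations every uncovered edge lies inside $U_\ell$; divisibility is preserved throughout because each removed triangle is $K_3$-divisible, so what remains is a $K_3$-divisible graph $L$ on $U_\ell$ avoiding $A$, which the absorber then finishes off.

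The main obstacle is the absorber: it must work simultaneously for \emph{every} admissible leftover $L$, yet be sparse enough that reserving its edges from the start does not disrupt the fractional/nibble analysis at each vortex level. This is where the $3/4$ threshold is essentially tight for the method, because the transformer gadgets and the cover-down step must function even inside the smallest vortex set $U_\ell$, so the ratio $\delta(G[U_i])/|U_i|$ must stay above $3/4+\varepsilon$ uniformly across $i$; only the global hypothesis $\delta(G)\geq(3/4+\varepsilon)n$ guarantees this. Making the absorber's reserved edges "invisible" to the nibble at every level, while proving that the final leftover is genuinely decomposable by $A$, is the delicate bookkeeping that carries the bulk of the technical work.
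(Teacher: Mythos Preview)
The paper does not prove this theorem at all: it is stated as a result of Barber, K\"{u}hn, Lo, and Osthus and cited from~\cite{barber2016edge} without proof, serving only as motivation for the analogous reduction (\Cref{thm:main}) in the hypergraph/transversal setting. There is therefore no ``paper's own proof'' to compare your proposal against.

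That said, your sketch is a faithful high-level summary of the iterative absorption scheme used in the original source~\cite{barber2016edge}: vortex, absorber for an arbitrary $K_3$-divisible leftover on the innermost set, and a level-by-level cover-down combining a nibble step with a greedy cleanup. Two small inaccuracies are worth flagging if you intend to turn this into an actual proof. First, the absorber is not literally supported inside $U_\ell$; its new vertices are placed in $U_0\setminus U_1$ (only the rooted part lies in $U_\ell$), exactly as in the present paper's use of \Cref{lem:sparse-K3-absorber} in \Cref{sec:proof}. Second, at level~$i$ the nibble is not simply run on $G[U_i]$: one must cover all edges of $G$ meeting $U_i\setminus U_{i+1}$ while leaving $G[U_{i+1}]$ essentially intact, which is a genuinely bipartite-flavoured step (compare \Cref{lem:cover-down} here). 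Neither point is a fatal gap, but both would need to be addressed in a full write-up.
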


Since we consider $3$-uniform hypergraph decomposition, we define a fractional $STS$ on a given graph $G$ as follows.
Let $H$ be a hypergraph. We define \emph{$2$-shadow} $H^{(2)}$ of $H$ to be a graph with $V(H^{(2)}) = V(H)$ and $E(H^{(2)}) = \{uv\in \binom{V(H)}{2} : \{u, v\}\subseteq f \text{ for some } f\in E(H)\}$.
A \emph{fractional $STS$ of $H$} is a function $\phi : E(H)\to [0, 1]$ such that $\sum_{e\subset f\in E(H)} \phi(f) \leq 1$ for every $e\in E(H^{(2)})$. We say this is \emph{perfect} if $\sum_{e\subset f\in E(H)} \phi(f) = 1$ holds for all $e\in H^{(2)}$. In other words, a fractional $STS$ of $H$ is a fractional triangle decomposition of $H^{(2)}$ whose support is a subset of $E(H)$. We define a threshold function for perfect fractional $STS$ as below.

\begin{definition}
    A function $\delta_f: [0, 1)\to [0, 1]$ is defined as follows. For a real number $\ve \in [0, 1)$, the value $\delta_f(\ve)$ is the infimum of the set of numbers $\delta \in [0, 1]$ with the following property: there is $n_0$ such that for every $3$-uniform hypergraph $H$ on $n \geq n_0$ vertices with $\delta(H^{(2)}) \geq (1 - \ve)(n - 1)$ and codegree of $e$ in $H$ is at least $\delta (n - 2)$ for all $e\in E(H^{(2)})$ contains a perfect fractional $STS$ of $H$. 
    We denote by $\delta_f^*$ the right limit of $\delta_f(\ve)$ as $\ve \to 0$.
\end{definition}

Since a complete $3$-uniform hypergraph obviously contains a fractional $STS$, the function $\delta_f(\cdot)$ is well-defined. By the definition, the function $\delta_f(\ve)$ is bounded and non-increasing as $\ve \to 0$. Thus, by the monotone convergence theorem, $\delta_f^*$ exists. Note that we don't know whether $\delta_f(0)$ and $\delta^*_f$ have the same value.


\subsection{Our results}\label{subsec:results}

Our main result is the following theorem proving the existence of transversal $STS$ in a collection of sufficiently dense $3$-uniform hypergraphs.

\begin{theorem}\label{thm:main}
    For any positive real number $\ve > 0$, there is $n_0 = n_0(\ve)$ that satisfies the following for all $n \geq n_0$ satisfying $n\equiv 1\text{ or } 3 \pmod{6}$.
    Let $\calH = \{H_1, \dots, H_{\frac{n(n-1)}{6}}\}$ be a collection of $3$-uniform hypergraphs on the common $n$-vertex set $V$. Assume for each $i\in [\frac{n(n-1)}{6}]$, the minimum codegree of $H_i$ is at least $\left(\max\{\frac{3}{4}, \delta^*_f\} + \ve\right)n$. Then there is a $\calH$-transversal $STS$ on $V$.
\end{theorem}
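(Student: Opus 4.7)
The plan is to follow the iterative absorption framework developed by Keevash~\cite{keevash2014existence} for designs and by Barber, K\"uhn, Lo and Osthus~\cite{barber2016edge} for minimum-degree triangle decompositions, adapted to the transversal setting in the spirit of~\cite{joos2020rainbow,chakraborti2023hamilton}. I would reserve a small random vertex set $R \subset V$ together with a matching random subcollection $\calH_{\mathrm{abs}} \subset \calH$ of colors for absorption. A Chernoff-type argument shows that the codegree hypothesis survives passing to $V\setminus R$ and $\calH\setminus\calH_{\mathrm{abs}}$ with loss $o(1)$, so a bound of $\bigl(\max\{\tfrac{3}{4},\delta^*_f\}+\tfrac{\ve}{2}\bigr)(n-|R|)$ is maintained throughout the remainder of the argument.

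Next, using the distributive absorption method, I would build a \emph{transversal absorber} on $R$: a partial transversal $STS$ with color set inside $\calH_{\mathrm{abs}}$ such that, for every small \emph{compatible} leave---a partial transversal configuration on $V\setminus R$ that uses a suitable sub-collection of $\calH\setminus\calH_{\mathrm{abs}}$---the union extends to a full transversal $STS$ on $V$. As in Glock--K\"uhn--Lo--Osthus~\cite{glock2023existence}, such a structure is assembled from many copies of a local gadget along a robust auxiliary graph; the transversal refinement requires the gadget to admit triples of arbitrary prescribed unused color, and this is supplied by the density hypothesis in each $H_i$.

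With the absorber fixed, it remains to cover $\binom{V\setminus R}{2}$ almost entirely by triples drawn from $\calH\setminus\calH_{\mathrm{abs}}$, leaving a residue compatible with the absorber. I would iterate a transversal R\"odl nibble over a constant number of rounds: in each round, a near-perfect fractional transversal decomposition of the auxiliary bipartite (pair, color) hypergraph is rounded to an integral one using semi-random methods. The existence of such a fractional decomposition at each stage is guaranteed precisely by the $\delta^*_f$ hypothesis. Between rounds, I pass to a uniformly random smaller subset of vertices (the \emph{cover-down} step); since codegrees concentrate around their original density, the Nash--Williams-type threshold $\tfrac{3}{4}$ appearing in~\Cref{thm:nash-williams} continues to apply at every level of the hierarchy, which is what forces the $\max\{\tfrac{3}{4},\delta^*_f\}$ bound in the hypothesis.

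The main obstacle I anticipate is the construction of the transversal absorber. Classical $STS$ absorbers are already intricate because they must accommodate any compatible pair-leave; the transversal version must simultaneously accept any compatible color-leave in $\calH_{\mathrm{abs}}$. The natural device is a bipartite auxiliary hypergraph whose two sides represent unused pairs of vertices and unused colors, on which absorption and nibble are designed jointly. Verifying that the leave produced by the iterative nibble is genuinely \emph{compatible} with the absorber---rather than merely small---and controlling the joint distribution of uncovered pairs and unused colors through every cover-down step, is where the most delicate bookkeeping will be required.
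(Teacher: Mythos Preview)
Your high-level architecture---iterative absorption with a vortex, adapted to track colours as well as pairs---matches the paper's, and you correctly identify that the crux is making the absorber and the cover-down step handle pairs and colours simultaneously. However, two of your key technical choices diverge from what the paper does, and in both places your outline stops short of a workable mechanism.

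First, the absorber. You propose distributive absorption; the paper instead uses \emph{exclusive} absorbers. Because the final vortex set $U_\ell$ has constant size $m$ and the surviving colour set $C_{\ell-1}$ is also of bounded size, the number of possible leftover pairs $(T,Z)$, with $T$ a $K_3$-divisible graph on $U_\ell$ and $Z$ a subset of $C_{\ell-1}$ of the right cardinality, is bounded. The paper simply embeds, in $U_0\setminus U_1$, one dedicated absorber $A_{(T,Z)}$ for \emph{every} such pair, using a gadget with $T$-rooted edge-degeneracy at most $4$ so that greedy embedding succeeds under the codegree hypothesis. This sidesteps entirely the ``joint pair/colour distributive gadget'' you flag as the main obstacle: no flexibility is needed, because every eventuality has its own absorber waiting. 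Your sketch does not indicate how a distributive construction would absorb an arbitrary colour-leave, and that is a genuine gap.

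Second, the colour bookkeeping through cover-down. A plain R\"odl nibble leaves an $\varepsilon$-fraction of pairs and colours uncovered, which is far too many to push into the next vortex level. The paper gets around this by (i) manufacturing, from the $\delta^*_f$ hypothesis, a $(1,0,\tfrac{\log^2 n}{n})$-pseudorandom fractional $STS$ via repeated fractional matchings with large weights deleted, (ii) discretising it to a highly regular auxiliary $4$-graph on pairs $\cup$ colours, and (iii) applying a quantitatively strong matching theorem of Kang--Kelly--K\"uhn--Methuku--Osthus rather than Pippenger--Spencer. Even so the residue is only polynomially small, so the paper tracks unused colours across \emph{nine} consecutive vortex levels (the sets $X_1,\dots,X_9$ in the cover-down lemma), shrinking each residual class by a factor $n^{1/4}$ per level until it can be absorbed greedily. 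Your proposal mentions neither the regularisation step nor any device for reducing the colour-leave below the nibble's $\varepsilon$-barrier; without this, the leave at the bottom of the vortex will not fit inside $C_{\ell-1}$ and the absorber cannot finish. Finally, a small correction: the $\tfrac{3}{4}$ in the hypothesis is not invoked via \Cref{thm:nash-williams}; it enters in the cover-down lemma, where one needs roughly $\tfrac{2}{3}|A_i|$ common $L$-neighbours inside each reserved set $A_i$ to run the local matching argument.
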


We remark that \Cref{thm:main} is the first attempt to obtain the minimum codegree threshold not only for the existence of $(n, q, 2, 1)$-designs but also for the existence of transversal $(n, q, 2, 1)$-designs.

Indeed, implicitly in~\cite{glock2023existence}, it was proven that there exists $c > 0$ such that for sufficiently large $n$ satisfying the divisibility condition, every $n$-vertex $3$-uniform hypergraph $H$ with minimum codegree at least $(1 - c)n$ contains a $STS$. However, the value $c$ was not explicitly determined and not practical. \Cref{thm:main} determines a reasonable minimum codegree threshold not only for the existence of $STS$ but also for transversal $STS$. We observe that the value $\frac{3}{4}$ in the coefficient $\max\{\frac{3}{4}, \delta^*_f\}$ is the best possible.

\begin{proposition}\label{prop:lower-bound}
    For every sufficiently large $n$ with $n \equiv 1 \text{ or } 3 \pmod{6}$, there is an $n$-vertex $3$-uniform hypergraph $H$ with minimum codegree at least $\frac{3}{4}n - 26$ but has no $STS$.
\end{proposition}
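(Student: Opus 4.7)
My plan is to construct an explicit $n$-vertex $3$-uniform hypergraph $H$ with minimum codegree at least $\frac{3n}{4} - 10$ that contains no Steiner triple system. The construction starts from a bipartition $V = A \cup B$ with $|A|$ and $|B|$ of sizes and parities chosen to match $n \pmod{12}$, and sets $H = \binom{V}{3}\setminus D$ for a carefully chosen family $D$ of forbidden triples. The family $D$ is designed so that (i) every pair $\{u,v\}\subseteq V$ lies in at most $\lfloor n/4\rfloor + 8$ triples of $D$, and (ii) every STS on $V$ must contain at least one triple of $D$.

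To verify condition (i), I would do a direct case analysis on the type of the pair $\{u,v\}$ (both in $A$, both in $B$, or crossing). Condition (i) then immediately gives that every pair has codegree at least $(n-2) - (\lfloor n/4\rfloor + 8) \ge \frac{3n}{4} - 10$ in $H$.

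The key step is to establish (ii). Assume for contradiction $\mathcal{S}\subseteq H$ is an STS on $V$. For each vertex $v$, the link matching $M_v$ of $\mathcal{S}$ at $v$ is a perfect matching of $V\setminus\{v\}$ whose edges $\{u,w\}$ must satisfy $\{u,v,w\}\in H$. The design of $D$ forces each $M_v$ (for $v\in A$, say) to avoid certain forbidden edges, which in turn forces the vertices of $A\setminus\{v\}$ to be matched to specific subsets of $B$ and the remainder of $B$ to be matched within $B$ with a prescribed parity. Aggregating these local constraints via a double-counting or parity argument across all $v \in A$ (or across pairs crossing between $A$ and $B$) yields two incompatible values modulo a small integer, giving the desired contradiction.

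The main obstacle is designing $D$ so that the parity obstruction saturates the codegree budget, reaching the sharp $\frac{3n}{4}$ threshold. Naive constructions, such as forbidding all triples inside a single subset $S\subseteq V$ with $|S| > (n+1)/2$, only give a codegree lower bound of $\approx\frac{n}{2}$, because the resulting obstruction lives at a single vertex's link. Pushing the threshold to $\frac{3n}{4}$ requires the forbidden family $D$ to encode an obstruction whose violation is distributed across many links simultaneously, so that each link is individually compatible (hence each link matching exists with high room to spare) but the joint system of link matchings is globally inconsistent.
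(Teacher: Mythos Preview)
Your proposal is not yet a proof: you describe the outline of a partition-plus-parity construction but stop short of specifying the forbidden family $D$, and you explicitly flag the construction as the ``main obstacle.'' The gap is real, and the bipartition $V=A\cup B$ is the wrong starting point. With only two parts, any natural choice of $D$ that creates a global parity obstruction (e.g.\ forbidding $\binom{A}{3}$) concentrates too many forbidden triples on pairs inside one part, so the codegree drops to roughly $n/2$ rather than $3n/4$. Your closing paragraph diagnoses this correctly but does not resolve it.

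The paper's construction uses a \emph{four}-part partition $V=V_0\cup V_1\cup V_2\cup V_3$ with each $|V_i|\ge n/4-8$, $|V_0|$ even, and $|V_1|,|V_2|,|V_3|$ odd. The forbidden triples are exactly those lying entirely inside $V_0$ together with those having one vertex in $V_0$ and two vertices in the same $V_i$ for some $i\ge 1$. A direct check shows that every pair lies in at most roughly $n/4$ forbidden triples (pairs inside $V_0$ are blocked only by a third vertex in $V_0$; a pair $\{v_0,v_i\}$ with $v_0\in V_0$, $v_i\in V_i$ is blocked only by a third vertex in $V_i$; a pair inside some $V_i$, $i\ge1$, is blocked only by a third vertex in $V_0$; cross pairs between $V_i$ and $V_j$ for $1\le i<j\le3$ are never blocked). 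This is precisely how the $3n/4$ threshold is reached: the three odd parts $V_1,V_2,V_3$ spread out the link restrictions that a single part would otherwise have to absorb.

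The non-existence argument is not a link-matching argument but a direct double count. Since no triple lies inside $V_0$, all $\binom{|V_0|}{2}$ pairs in $V_0$ are covered by triples with two vertices in $V_0$; these account for $|V_0|(|V_0|-1)$ of the $V_0$--$(V\setminus V_0)$ cross pairs, and the remaining $|V_0|(n-2|V_0|+1)$ cross pairs must be covered by triples meeting $V_0,V_i,V_j$ for distinct $i,j\ge1$. Each such triple covers exactly one $V_i$--$V_j$ pair, so the number of $V_i$--$V_j$ pairs (over all $1\le i<j\le3$) still uncovered is $|V_1||V_2|+|V_2||V_3|+|V_3||V_1|-\tfrac{|V_0|}{2}(n-2|V_0|+1)$, and these must be covered in pairs by triples of type ``two in $V_i$, one in $V_j$.'' The chosen parities make this last quantity odd, a contradiction. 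If you want to salvage your plan, replace the bipartition by this four-part scheme and run the counting argument above rather than aggregating link matchings.
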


\begin{proof}
    We construct such a hypergraph $H$. Let $V$ be a set of $n$ points and let $V = V_0 \cup V_1 \cup V_2 \cup V_3$ be a vertex partition with $\frac{n}{4} - 8 \leq |V_i|$ for each $0 \leq i \leq 3$ and $|V_0|$ is even and $|V_i|$ are odd for all $1 \leq i \leq 3$. 
    Let $E_0 = \{e\in \binom{V}{3}: |e\cap V_0| = 2\}$, $E_1 = \{e\in \binom{V}{3}: \text{ $e$ intersects each $V_0$, $V_i$, $V_j$ for some $1 \leq i \neq j \leq 3$}\}$. Let $E_2 = \{e\in \binom{V}{3}: e\subset V_i \text{ for some $1 \leq i \leq 3$}\}$ and $E_3 = \{e\in \binom{V}{3}: |e \cap V_i| = 2,\text{ } |e \cap V_j| = 1 \text{ for some $1 \leq i \neq j \leq 3$}\}$.
    We construct $H$ as $V(H) = V$ and $E(H) = E_0 \cup E_1 \cup E_2 \cup E_3$. Then the minimum codegree of $H$ is at least $\frac{3}{4}n - 26$. Thus, it suffices to show $H$ has no $STS$.

    \begin{figure}[ht]
     \centering
     \begin{subfigure}[b]{0.2\columnwidth}
         \centering
         \includegraphics[width=\columnwidth]{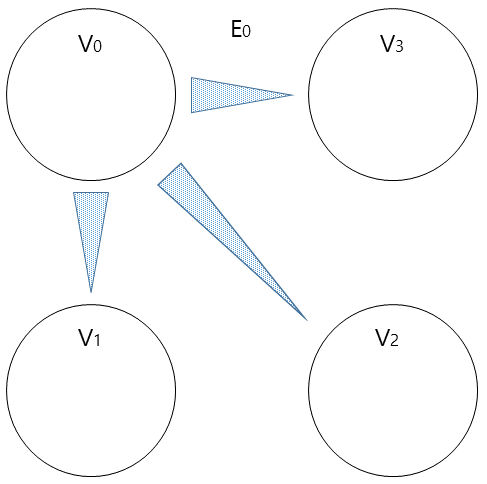}
         \caption{$E_0$}
     \end{subfigure}
     \hfill
     \begin{subfigure}[b]{0.2\columnwidth}
         \centering
         \includegraphics[width=\columnwidth]{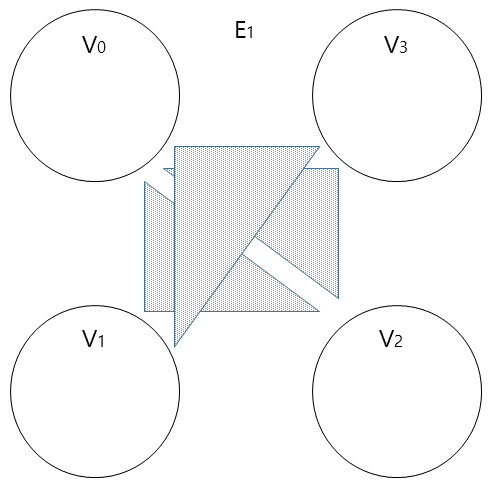}
         \caption{$E_1$}
     \end{subfigure}
     \hfill
     \begin{subfigure}[b]{0.2\columnwidth}
         \centering
         \includegraphics[width=\columnwidth]{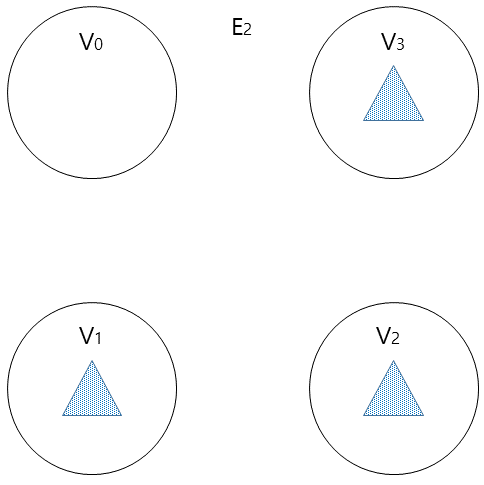}
         \caption{$E_2$}
     \end{subfigure}
    \hfill
     \begin{subfigure}[b]{0.2\columnwidth}
         \centering
         \includegraphics[width=\columnwidth]{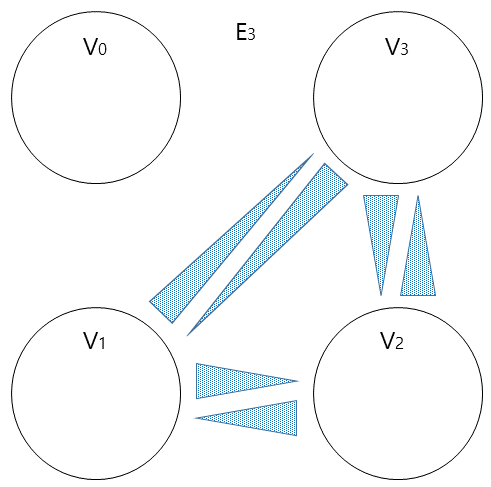}
         \caption{$E_3$}
     \end{subfigure}
     \caption{Vertex subsets and edge subsets of $H$.}
    \end{figure}

    For the sake of contradiction, assume $H$ contains a $STS$. Let $S$ be the such $STS$ in $H$. We denote $E'_i$ as $E(S)\cap E_i$ for each $0 \leq i \leq 3$. Since there is no edge in $V_0$, every pair of distinct vertices in $V_0$ must covered by an edge in $E'_0$. Since each edge in $E'_0$ covers two distinct pairs of vertices between $V_0$ and $V \setminus V_0$, the edges in $E'_0$ cover precisely $V_0(V_0 - 1)$ pairs of vertices between $V_0$ and $V\setminus V_0$. Then the number of uncovered pairs between $V_0$ and $V\setminus V_0$ is $|V_0| (n - 2|V_0| + 1)$. Then these edges should covered by $E'_1$. Since each edge in $E'_1$ covers two pairs of vertices between $V_0$ and $V\setminus V_0$ while covering one pair of vertices between $V_i$ and $V_j$ for some $1 \leq i \neq j \leq 3$. Thus, the number of pair of vertices between $V_i$ and $V_j$, where $1 \leq i \neq j \leq 3$ is covered by $E'_1$ is $\frac{|V_0|}{2}(n - 2|V_0| + 1)$. Thus, the number of uncovered pair of vertices between $V_i$ and $V_j$ for $1 \leq i \neq j \leq 3$ is $(|V_1||V_2| + |V_2||V_3| + |V_3||V_1|) - \frac{|V_0|}{2}(n - 2|V_0| + 1)$. These pairs should be covered by $E'_3$. Since each edges in $E'_3$ covers two pairs, this implies that $(|V_1||V_2| + |V_2||V_3| + |V_3||V_1|) - \frac{|V_0|}{2}(n - 2|V_0| + 1)$ should be even. However, since $|V_i|$ are odd for all $1\leq i \leq 3$, and $|V_0|$ is even, and $n - 2|V_0| + 1$ is even, the number is odd, a contradiction. Thus, $H$ has no $STS$.
\end{proof}

By taking each hypergraph to be the same, we can see that the value $\frac{3}{4}$ cannot be replaced with any smaller number also in the case of the transversal $STS$. Indeed, we conjecture that any $3$-uniform hypergraph systems with each hypergraph has minimum codegree at least $\frac{3}{4}n + C$ contains a transversal $STS$ for some universal constant $C$.

In order to get an explicit bound for $\delta_f^*$, we obtain a uniform upper bound for the function $\delta_f(\cdot)$.

\begin{theorem}\label{thm:threshold-uniformbound}
    $\delta_f(\ve) \leq \frac{3 + \sqrt{57}}{12} < 0.88$ for all $\ve\in [0, 1)$.
\end{theorem}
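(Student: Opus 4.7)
The plan is to produce, for every $3$-uniform hypergraph $H$ satisfying the hypothesis with $\delta > \frac{3+\sqrt{57}}{12}$, an explicit nonnegative weighting $\phi : E(H) \to [0,1]$ with $\sum_{f \supset e} \phi(f) = 1$ for every $e \in E(H^{(2)})$. The linear-programming formulation makes this a question about the row space of the pair-versus-hyperedge incidence matrix, so the natural strategy is to write $\phi$ as a uniform baseline plus a small symmetric correction, check feasibility via codegree counting, and finally clean up a negligible residual.

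First I would take a uniform baseline $\phi_0(f) = \frac{1}{n-2}$ for every $f \in E(H)$. For each pair $e \in E(H^{(2)})$, this contributes $\frac{d_H(e)}{n-2} \in [\delta, 1]$, where $d_H(e)$ denotes the codegree of $e$ in $H$, so every pair has a nonnegative defect of size at most $1-\delta$. The remaining task is to distribute these defects to hyperedges in a way that is symmetric in the three pairs of each triple and respects the codegree profile of $H$.

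Second, I would use a symmetric correction of the form
\[
    \phi_1(uvw) = \mu + \nu \bigl( h(uv) + h(uw) + h(vw) \bigr),
\]
where $h(e)$ depends only on $d_H(e)$, and then optimize over $\mu$, $\nu$, and the function $h$. Summing $\phi_0+\phi_1$ over all $w$ with $\{u,v,w\}\in E(H)$ turns each equality constraint at a pair $e$ into an identity that is linear in $\mu,\nu$ and whose coefficients involve aggregate codegree sums along $e$. Bounding these sums via the minimum codegree hypothesis reduces feasibility of the ansatz (up to $o(1)$) to a scalar inequality in $\delta$; the critical case occurs exactly when $6\delta^{2} - 3\delta - 2 = 0$, whose positive root is $\frac{3+\sqrt{57}}{12}$.

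Finally, since $\delta$ exceeds the threshold by a fixed constant and the shadow has minimum degree $(1-\varepsilon)(n-1)$, I would absorb the remaining $o(1)$ defects by short cherry redistributions, each transferring an $o(1)$ amount of weight across a chain of two or three hyperedges sharing pairs; standard convergence estimates then produce a feasible $\phi$. The main obstacle, where the quadratic $6\delta^{2} - 3\delta - 2 = 0$ genuinely enters, is keeping $\phi_0 + \phi_1$ nonnegative on every hyperedge while matching the defect exactly at every pair: pushing $\delta$ lower forces either $\nu$ large enough to make $\phi_1$ negative on triples whose three pairs are all \emph{light}, or else leaves some defects unresolved. This obstruction also suggests that reaching the conjectured $3/4$ threshold would require a genuinely richer gadget than the symmetric correction used here.
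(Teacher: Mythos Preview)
Your outline recovers the correct quadratic $6\delta^{2}-3\delta-2=0$, but only by assertion; the plan is missing the mechanism that actually produces it. The trouble is step two. You never specify $h$, $\mu$, $\nu$, and the claim that summing the ansatz over the hyperedges through a pair $e=uv$ reduces to a scalar condition in $\delta$ is exactly what must be proved. That sum contains the cross term
\[
\nu\sum_{w:\,uvw\in E(H)}\bigl(h(uw)+h(vw)\bigr),
\]
which depends on the codegrees of the \emph{other} pairs through $u$ and $v$; the hypothesis gives only a one-sided bound on each summand, so this aggregate varies by $\Theta(1)$ across pairs, not $o(1)$. Forcing $\sum_{f\supset e}(\phi_0+\phi_1)(f)$ to be constant is therefore an overdetermined linear system in your handful of parameters, and the unspecified ``cherry redistributions'' are being asked to do the real work rather than mop up a negligible residual. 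Nonnegativity of $\phi_0+\phi_1$ on every hyperedge is likewise not established.

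The paper's proof (adapting Dross's technique for fractional triangle decompositions) supplies the missing idea: a \emph{rooted $K_4^{(3)}$} on $\{u_1,u_2,v_1,v_2\}$ admits an $\eta$-flow operation that increases the pair-sum at $u_1v_1$ by $\eta$, decreases it at $u_2v_2$ by $\eta$, and leaves the other four pair-sums fixed. One starts from the constant weight $\omega=|E(G)|/|E(H)|$, builds a flow network on $E(G)$ whose internal edges are these rooted $K_4^{(3)}$s (with capacities chosen so that all hyperedge weights stay nonnegative), and applies max-flow min-cut: if the required flow does not exist, analysing an optimal cut via the lower bound on the number of rooted $K_4^{(3)}$s at each pair forces $-(1+\delta)<1-\delta^{2}+\delta(2-5\delta)$, i.e.\ $6\delta^{2}-3\delta-2<0$, a contradiction. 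The $K_4^{(3)}$ gadget and the cut analysis are precisely what generate the threshold; your sketch contains no analogue of either.
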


Together with \Cref{thm:main}, this theorem immediately implies the following.

\begin{corollary}\label{cor:0.88}
    For any positive real number $\ve > 0$, there is $n_0 = n_0(\ve)$ that satisfies the following for all $n$ satisfying $n \equiv 1 \text{ or } 3 \pmod{6}$.
    Let $\calH = \{H_1, \dots, H_{\frac{n(n-1)}{6}}\}$ be the collection of $3$-uniform hypergraph on a common $n$-vertex set $V$ such that the minimum codegree of $H_i$ is at least $\left(\frac{3 + \sqrt{57}}{12} + \ve \right) n$ for each $i\in [\frac{n(n-1)}{6}]$. Then there is a $\calH$-transversal $STS$ on $V$.
\end{corollary}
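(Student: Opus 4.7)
The plan is to obtain \Cref{cor:0.88} as an immediate logical consequence of the two preceding results, \Cref{thm:main} and \Cref{thm:threshold-uniformbound}. The strategy is simply to use the uniform upper bound on the fractional threshold provided by \Cref{thm:threshold-uniformbound} to verify the hypothesis of \Cref{thm:main}, with no further combinatorial work required at this stage.

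First I would establish the inequality $\delta_f^* \leq \frac{3+\sqrt{57}}{12}$. By definition, $\delta_f^*$ is the right limit of $\delta_f(\varepsilon)$ as $\varepsilon \to 0^+$, and \Cref{thm:threshold-uniformbound} gives $\delta_f(\varepsilon) \leq \frac{3+\sqrt{57}}{12}$ uniformly for every $\varepsilon \in [0,1)$, so the bound passes to the limit. Next, since $\sqrt{57} > 6$ one has $\frac{3+\sqrt{57}}{12} > \frac{3}{4}$, and therefore
\[
    \max\!\left\{\tfrac{3}{4},\, \delta_f^*\right\} \;\leq\; \frac{3+\sqrt{57}}{12}.
\]
Consequently, if each $H_i$ satisfies the codegree bound assumed in \Cref{cor:0.88}, namely at least $\bigl(\frac{3+\sqrt{57}}{12} + \varepsilon\bigr)n$, then it automatically satisfies the codegree bound $\bigl(\max\{\tfrac{3}{4},\delta_f^*\} + \varepsilon\bigr)n$ required by \Cref{thm:main}. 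Applying \Cref{thm:main} to $\mathcal{H}$ (with the same $\varepsilon$ and a suitable $n_0(\varepsilon)$ inherited from that theorem) produces the desired $\mathcal{H}$-transversal \emph{STS}.

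Since every step of this derivation is either a direct invocation of a previously stated theorem or an elementary numerical comparison, there is no genuine obstacle at this level. The substantive difficulty of the paper is concentrated entirely in the proofs of \Cref{thm:main} (establishing a transversal version of the hypergraph decomposition threshold) and \Cref{thm:threshold-uniformbound} (bounding the fractional \emph{STS} threshold by an explicit constant), and the corollary merely packages these two inputs into a clean explicit codegree statement.
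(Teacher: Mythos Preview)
Your proposal is correct and matches the paper's approach exactly: the corollary is stated as an immediate consequence of \Cref{thm:main} together with \Cref{thm:threshold-uniformbound}, via precisely the inequality $\max\{\tfrac{3}{4},\delta_f^*\}\le \tfrac{3+\sqrt{57}}{12}$ that you derive. No additional argument is given or needed.
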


We remark that \Cref{thm:simple-main} can be directly deduced from \Cref{cor:0.88} by taking all hypergraphs $H_1, \cdots, H_{\frac{n(n-1)}{6}}$ the same.

In our proof, we develop a novel way of getting an approximated structure via fractional hypergraph matchings. We also utilize the iterative absorption method for both edge and color absorptions.
Iterative absorption was first introduced in~\cite{kuhn2013hamilton} and the method has significant applications to several central problems in extremal combinatorics. (For instances, see~\cite{kuhn2013hamilton,knox2015edge,barber2016edge,barber2020minimalist,barber2017clique,kang2023thresholds,kwan2022high}.) The purpose of the iterative absorption method is to restrict the size of the uncovered structure to easily absorb.

The method of iterative absorption typically consists of three steps. First, we fix an appropriate sequence of the vertex subsets of the host structure that looks like a  ``vortex". The size of each vertex set in the vortex decreases rapidly and the final vertex set has only constant size. 
Second, we iteratively construct a desired structure based on the vortex. This step can be achieved by the so-called ``cover-down lemma", which is the main technical hurdle for the iterative absorption. By the cover-down lemma, we construct a partial desired structure that ``covers" all (hyper)edges outside a smaller set in the vortex and reduces the problem to a smaller problem inside this smaller set. Repeatedly applying the cover-down lemma, we can ensure that the remaining (hyper)edges are all in the final constant-size vertex set in the vortex.
Lastly, we ``absorb" these remaining (hyper)edges by using so-called exclusive absorbers that can absorb small structures. The exclusive absorber is a flexible construction that forms a desired construction together with the ``left-over" in the final vertex set of the vortex no matter what the left-over structure is. We construct such exclusive absorbers and embed them prior to reaching the final vertex set of the vortex. 

\paragraph{Proof sketch.}
Our proof strategy follows the aforementioned scheme. We first fix a vortex for the transversal $STS$. Since we are concerned with a transversal structure, we extend the definition of vortex into ``color-vortex" and ``transversal-vortex" (See \Cref{def:color-vortex,def:transversal-vortex}) which satisfies certain structural properties of all hypergraphs with respect to an appropriately chosen color set in each level of vortex. 

We need a cover-down lemma with respect to the given transversal-vortex. In other words, we need to be able to cover all edges not inside a smaller set in the vortex as a shadow of a linear $3$-graph in such a way that the hyperedges of the $3$-graph come from different colors. Forgetting the colors for the moment, the approximate construction of such $3$-graph can be reduced to a hypergraph matching problem. Indeed, for the given set $E\subseteq \binom{V(H)}{2}$ of all the edges to be covered and the given $3$-graph $H$ of available hyperedges, consider an auxiliary graph $H'$ on the vertex set $E$ with edge set $\{\{xy, yz, zx\}\subseteq E: xyz \in E(H)\}$. Then an almost perfect matching in $H'$ is an approximate linear $3$-graph that we seek. 

There are extensive studies about conditions guaranteeing an almost perfect matching in hypergraph. Arguably, Pippenger-Spencer theorem~\cite{pippenger1989asymptotic} is a most useful theorem utilizing R\"{o}dl nibble method to find an almost perfect matching. It states that any sufficiently regular hypergraph with a small maximum codegree has an almost perfect matching covering all but $\ve$-fraction of vertices. We wish to apply this theorem to find an almost perfect matching in the auxiliary hypergraph $H'$. There are two main obstacles to this plan. One is that we must ensure that certain vertices of $H'$ must be covered by a matching. The ability to cover all of the predesignated vertices in $H'$ is indeed necessary. First, we need to cover all edges not inside the smaller set in the vortex. Second, for the later color-absorption, it is crucial that the last left-over colors are only the colors inside the final color set in the color-vortex. Hence, we need to prove that we can find an almost perfect matching in $H'$  covering all vertices from a predesignated set. Our plan for overcoming this issue is the following. Divide the vertex set (and color set) into two parts so that ``predesignated" vertices and colors belong to the first set. After finding an almost perfect matching of $H'$ in the first set, if only a very small number of predesignated vertices and colors have remained, once very few of them are remaining, we can greedily cover them using a hyperedge of $H'$ between the two parts. However, it turns out that the leftover has to be very small (smaller than a certain polynomial fraction of $H'$.) As  Pippenger-Spencer theorem only guarantees the left-over size is less than $\ve$ fraction of vertices, it is not sufficient for our purpose. We instead use a quantitatively stronger theorem from~\cite{kang2020new}. In fact, as we need a much stronger result than what~\cite{kang2020new} offers, we track nine consecutive color sets (instead of two) of color- and transversal-vortex and repeat the above process multiple times to make sure the number of remaining predesignated vertices and colors are smaller than what we need and the greedy covering does not ruin the remaining hypergraph too much. 

Another obstacle is that the auxiliary hypergraph $H'$ can be far from regular. (The codegree condition is fine as $H'$ has maximum codegree $1$.) Moreover, to utilize the quantitatively stronger hypergraph matching theorem mentioned above, we need a very strong regularity. In order to obtain a very regular subgraph of $H'$, we utilize the fractional matching of $H$. As we assume that the minimum codegree of $H$ is at least $(\delta^*_f + \ve)n$, the auxiliary graph $H'$ has a fractional perfect matching in a robust way. We repeatedly take a fractional perfect matching and delete hyperedges with large weights, we obtain many fractional perfect matching in $H'$. Taking a union of them yields a perfectly regular edge-weighted hypergraph with bounded weights. By discretizing the weights, we can find a very regular subgraph of the hypergraph $H'$ which is a multi-hypergraph with bounded multiplicity. For such a multi-hypergraph, we can apply the hypergraph matching theorem to obtain an almost perfect matching as desired.

In the preceding paragraphs, we often ignored the colors and only discussed the vertices. To show that we can incorporate the colors in our plan, consider the hypergraph obtained by adding new vertices that represent each color to $H'$ and replace the edges by its union with the new vertex corresponding to its color. Then a perfect matching in this new hypergraph is exactly the transversal that we are looking for. Although there are some more technical difficulties, this gives some idea on how to incorporate the colors in the cover-down lemma.

For the absorption step, we need to construct an exclusive absorber that can absorb the colors as well. To achieve this, we introduce a variant of the notion of degeneracy, namely ``edge-degeneracy" (See \Cref{def:edge-degeneracy}). A small edge-degeneracy ensures that it can be greedily embedded in a dense $3$-graph. Hence, we construct a desired exclusive absorber with small edge-degeneracy, and we can greedily embed them for the last vertex set of transversal-vortex using the small edge-degeneracy. Once we obtain the above cover-down lemma (\Cref{lem:cover-down}) and the exclusive absorber, we can follow the scheme of iterative absorption to finish the proof.

\paragraph{Organization.}
The rest of the paper is organized as follows. In \Cref{sec:prelim}, we declare our notations and introduce several tools for the proof of main theorems. In \Cref{sec:fractional-threshold}, we prove \Cref{thm:threshold-uniformbound} by using the max-flow min-cut theorem. We show that we can obtain the approximated transversal $STS$ via fractional $STS$ in \Cref{sec:pseudorandomSTS+absorber}. We also construct the exclusive absorbers in \Cref{sec:pseudorandomSTS+absorber} for the final absorption step. In \Cref{sec:cover down}, we state and prove the cover-down lemma for transversal $STS$ which is the main lemma to prove our main result. By combining all of them, in \Cref{sec:proof}, we prove \Cref{thm:main} by using the iterative absorption method. Finally, we end with concluding remarks and several open problems in \Cref{sec:concluding}.


\section{Preliminaries}\label{sec:prelim}

\subsection{General notations}\label{subsec:notations}

We treat large numbers as if they are integers by omitting floors and ceilings.
When we claim a statement holds if $0 < \alpha \ll \beta_1, \beta_2 \ll \gamma < 1 $, it means that there exist functions $f$ and $g$ such that the statement holds whenever $\beta_1 ,\beta_2 < f(\gamma)$ and $\alpha < g(\beta_1, \beta_2)$. We will not compute those functions explicitly.
We denote the set $\{1, 2, \dots, n\}$ by $[n]$.
For a hypergraph $H$, $V(H)$ and $E(H)$ denote the set of vertices and the set of edges of $H$, respectively. For a positive integer $k$, we write $k$-uniform hypergraph as $k$-graph. We simply write $2$-graph as graph. We write $H_{\mathrm{simp}}$ for a \emph{simplification} of $H$, which is a hypergraph obtained by identifying all multiple edges of $H$ into a single edge. For a graph $G$ and disjoint vertex sets $X, Y \subseteq V(G)$, $G[X, Y]$ denotes a subgraph of $G$ with the vertex set $X\cup Y$ and the edge set $\{xy\in E(G): x\in X, y\in Y\}$.

For a $k$-graph $H$, we denote the $i$-degree of $H$ as follows, where $1 \leq i \leq k-1$. For a set $X\in \binom{V(H)}{i}$, we write \[d_i(X) \defeq |\{f\in E(H): X\subset f\}|.\] We denote by $\Delta_i(H)$ and $\delta_i(H)$ the maximum and minimum $i$-degree of $H$, respectively. We simply write $\Delta(H)$ and $\delta(H)$ for $\Delta_1(H)$ and $\delta_{1}(H)$, respectively. For given $U\subseteq V(G)$ and $f\in \binom{V(G)}{k-1}$, we define \[N_H(f; U) \defeq \{v\in U: f\cup \{v\}\in E(H)\}\] and \[d_H(f; U) \defeq |N_H(f; U)|.\] If $H$ is clear from the context, we omit the subscript $H$.
For a $3$-graph $H$, we say that $H$ has \emph{essential minimum codegree} at least $\delta$ if $d_2(e) \geq \delta$ for all $e\in E(H^{(2)})$ and we write $\delta_{\mathrm{ess}}(H) \geq \delta$.

Let $H$ and $H'$ be two hypergraphs. Then we denote $H\setminus H'$ the hypergraph with the vertex set $V(H)$ and the edge set $E(H)\setminus E(H')$. Similarly, $H\cup H'$ is a hypergraph on the vertex set $V(H)\cup V(H')$ with the edge set $E(H)\cup E(H')$.

We say that a $3$-graph $H$ is a $K_3$-decomposition of a graph $G$ if $H$ is a linear hypergraph and $H^{(2)} = G$.

\subsection{Pseudorandom hypergraphs}
For non-negative real numbers $a, b$ and $c$, the expression $x = (a\pm b)c$ means that $x$ is between $(a - b)c$ and $(a + b)c$. If $c = 1$, we write $(a\pm b)$ instead of $(a\pm b)c$. We say a hypergraph $H$ is \emph{$(a\pm b)c$-regular} if $(a - b)c \leq \delta(H) \leq \Delta(H) \leq (a + b)c$.

\begin{definition}
    Let $D, \delta, \tau > 0$ be positive real numbers. For a multi-hypergraph $H$, we say $H$ that is \emph{$(D, \tau, \delta)$-pseudorandom} if $H$ is $(1 \pm \tau )D$-regular and $\Delta_2(H) \leq \delta D$.
\end{definition}

\begin{definition}
    Suppose that $M$ is a matching in a hypergraph $H$ and $\calF \subset 2^{V(H)}$ is a collection of subsets of vertices, and $\ve \geq 0$ is a non-negative real number. We say $M$ is an $(\calF, \ve)$-pseudorandom matching if the size of the set $F\setminus M$ is at most $\ve |F|$ for every $F\in \calF$.  
\end{definition}

Let $H$ be a simple hypergraph. We call a function $\psi: E(H) \to [0, 1]$ \emph{weighted} on $H$. For a vertex $v\in V(H)$, we define $d^{\psi}(v) \defeq \sum_{v\in e \in E(H)} \psi(e)$, a weighted degree of $v$. Similarly, for a pair of distinct vertices $u, v$, we write $d_2^{\psi}(\{u, v\}) \defeq \sum_{\{u, v\}\subseteq e \in E(H)} \psi(e)$, a weighted codegree of $\{u, v\}$.
We denote by $\Delta^{\psi}(H)$, $\delta^{\psi}(H)$, $\Delta_2^{\psi}(H)$, and $\delta_2^{\psi}(H)$ be the maximum/minimum weighted (co)degree, respectively. We denote by $\lVert \psi \rVert$ the maximum value of $\psi$, that is $\max\{\psi(e): e\in E(H)\}$.

If a weighting $\psi$ on $H$ satisfies $d^{\psi}(v) \leq 1$ for every $v\in V(H)$, then we say that $\psi$ is a \emph{fractional matching} of $H$.
We say $\psi$ is a \emph{perfect fractional matching} of $H$ if $d^{\psi}(v) = 1$ holds for all $v\in V(H)$.

\begin{definition}
    Let $H$ be a hypergraph.
    For non-negative real number $d, \tau, \delta$, we say a fractional matching $\psi$ of $H$ is \emph{$(d, \tau, \delta)$-pseudorandom fractional matching} if for all $v\in V(H)$, we have $d^{\psi}(v) = (1 \pm \tau)d$ and $\Delta_2^{\psi}(H) \leq \delta d$.
\end{definition}

Let $H$ be a $3$-graph. We note that a weighted subgraph $\phi$ of $H$ is a fractional $STS$ if $\sum_{e\subset f\in E(H)}\phi(f) \leq 1$ holds for all $e\in E(H^{(2)})$. Remark that if $\sum_{e\subset f\in E(H)}\phi(f) = 1$ holds for all $e\in E(H^{(2)})$, then we call $\phi$ is a perfect fractional $STS$ of $H$.

\begin{definition}
    Let $H$ be a $3$-graph. For non-negative real numbers $d, \tau, \delta$, we say that a fractional $STS$ $\phi$ on $H$ is \emph{$(d, \tau, \delta)$-pseudorandom fractional $STS$} if for all $e\in E(H^{(2)})$, we have $d^{\phi}(e) = (1 \pm \tau) d$ and $\lVert \phi \rVert \leq \delta d$.
\end{definition}

Let $H$ be a $ 3$-uniform simple hypergraph. We denote by $H_{aux}$ the auxiliary $3$-graph $F$, where $V(F) = E(H^{(2)})$ and $E(F) = \{\{e_1, e_2, e_3\}\subset E(H^{(2)}): \exists f\in E(F) \text{ such that } f = e_1 \cup e_2 \cup e_3\}$. We note that $H_{aux}$ is a linear hypergraph.

The following observation is immediate from the definition of $H_{aux}$.

\begin{observation}\label{obs:correspondence}
    Let $H$ be a $3$-uniform simple hypergraph and $d, \tau, \delta$ be non-negative real numbers. Then $H$ has a $(d, \tau, \delta)$-pseudorandom fractional $STS$ of $H$ if and only if $H_{aux}$ has a $(d, \tau, \delta)$-pseudorandom fractional matching.
\end{observation}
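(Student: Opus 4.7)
The plan is to set up an explicit bijection between $E(H)$ and $E(H_{aux})$ induced by the correspondence $f \mapsto \{e_1, e_2, e_3\}$, where $e_1, e_2, e_3$ are the three pairs contained in $f \in E(H)$, and then transport the weight function through this bijection. Concretely, given a fractional $STS$ $\phi$ on $H$, I would define $\psi$ on $E(H_{aux})$ by $\psi(\{e_1, e_2, e_3\}) \defeq \phi(e_1 \cup e_2 \cup e_3)$, and vice versa; since $H$ is simple (so no two distinct edges of $H$ produce the same triple of shadow pairs) and by the definition of $H_{aux}$, this is a well-defined bijection.

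The verification then splits into three pieces. First, for every $e \in E(H^{(2)}) = V(H_{aux})$, the two sums defining $d^\phi(e) = \sum_{e \subset f \in E(H)} \phi(f)$ and $d^\psi(e) = \sum_{e \in E \in E(H_{aux})} \psi(E)$ are literally the same sum under the correspondence, so the fractional $STS$ inequality $d^\phi(e) \leq 1$ becomes the fractional matching inequality $d^\psi(e) \leq 1$, and the equality $d^\phi(e) = (1 \pm \tau)d$ for all such $e$ becomes $d^\psi(v) = (1 \pm \tau)d$ for all $v \in V(H_{aux})$. Second, $\lVert \phi \rVert = \lVert \psi \rVert$ is immediate from the definition of the bijection. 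Third -- and this is the only place where a structural property of $H_{aux}$ is actually used -- since $H_{aux}$ is linear, each pair $\{u, v\}$ of vertices of $H_{aux}$ is contained in at most one edge of $H_{aux}$, so $d_2^\psi(\{u,v\}) \leq \lVert \psi \rVert$ with equality achieved by the pair inside a maximum-weight edge. Hence $\Delta_2^\psi(H_{aux}) = \lVert \psi \rVert = \lVert \phi \rVert$, matching the hypothesis $\lVert \phi \rVert \leq \delta d$ with the condition $\Delta_2^\psi(H_{aux}) \leq \delta d$.

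Combining the three observations gives both directions of the equivalence simultaneously, since the map $\phi \leftrightarrow \psi$ is a bijection preserving every quantity appearing in the definitions of the two pseudorandomness notions. The main obstacle, such as it is, is purely notational bookkeeping: making sure the bijection is unambiguous (which requires the simplicity of $H$) and that the linearity of $H_{aux}$ is invoked at exactly the step where $\lVert \phi \rVert$ is identified with $\Delta_2^\psi(H_{aux})$. No inequalities, probabilistic arguments, or auxiliary lemmas are needed, which is why the paper labels the statement as an observation.
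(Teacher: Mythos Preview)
Your proposal is correct and is exactly the natural unpacking of the definitions; the paper itself gives no proof beyond the sentence ``immediate from the definition of $H_{aux}$,'' so your write-up is simply the explicit verification. The one substantive point---that linearity of $H_{aux}$ forces $\Delta_2^{\psi}(H_{aux}) = \lVert\psi\rVert$, which is what bridges the $\lVert\phi\rVert$ condition on the $STS$ side with the weighted-codegree condition on the matching side---is handled correctly.
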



\subsection{Probabilistic inequalities}\label{subsec:prob-inequality}

\begin{lemma}[Chernoff bound~\cite{janson2011random}]\label{lem:chernoff}
    Let $X_1, \dots, X_n$ be $n$ independent Bernoulli random variables and let $X = \sum_{i\in [n]} X_i$. Then the following inequality holds for every $\delta \in [0, 1)$.

    $$Pr\left(|X - \mathbb{E}[X]| \geq \delta \mathbb{E}[X] \right) \leq 2 exp\left(-\frac{\delta^2 (\mathbb{E}[X])^2}{3}\right).$$
    
\end{lemma}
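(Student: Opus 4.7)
The plan is to follow the standard moment generating function (Chernoff--Bernstein) argument. By symmetry, it suffices to prove one-sided bounds $\Pr(X \geq (1+\delta)\mu) \leq \exp(-\delta^2 \mu / 3)$ and $\Pr(X \leq (1-\delta)\mu) \leq \exp(-\delta^2 \mu / 3)$ with $\mu = \mathbb{E}[X]$, and then union-bound these two events to obtain the factor of $2$ on the right-hand side.

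For the upper tail, I would first apply Markov's inequality to the non-negative random variable $e^{tX}$, where $t > 0$ is a parameter to be optimized later, giving $\Pr(X \geq (1+\delta)\mu) \leq e^{-t(1+\delta)\mu}\,\mathbb{E}[e^{tX}]$. Then I would exploit independence of the $X_i$ to factor the moment generating function as $\mathbb{E}[e^{tX}] = \prod_{i=1}^{n}\mathbb{E}[e^{tX_i}]$, and bound each factor using the elementary inequality $\mathbb{E}[e^{tX_i}] = 1 + p_i(e^t - 1) \leq \exp\bigl(p_i(e^t-1)\bigr)$ where $p_i = \mathbb{E}[X_i]$. Multiplying these bounds collapses to $\mathbb{E}[e^{tX}] \leq \exp(\mu(e^t - 1))$.

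The key optimization step is to choose $t = \ln(1+\delta)$, which yields $\Pr(X \geq (1+\delta)\mu) \leq \exp\bigl(-\mu\,h(\delta)\bigr)$ with $h(\delta) \defeq (1+\delta)\ln(1+\delta) - \delta$. The lower tail is handled symmetrically by applying Markov's inequality to $e^{-tX}$ for $t>0$ and optimizing analogously. The only step requiring real work is then the analytic verification that $h(\delta) \geq \delta^2/3$ for $\delta \in [0,1)$, which I would establish by a Taylor expansion around $\delta = 0$ and a routine monotonicity check of the difference $h(\delta) - \delta^2/3$ on this interval; this is where I expect the bookkeeping to be slightly fiddly, though not genuinely difficult.

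I would remark that the statement as written has $(\mathbb{E}[X])^2$ in the exponent, which appears to be a typographical slip, since the standard Chernoff bound of the cited form has $\mathbb{E}[X]$ in the exponent; the proof sketch above targets the latter, and the former follows a fortiori whenever $\mathbb{E}[X] \geq 1$. Since the result is invoked only as a black box in the paper and comes from a well-known reference, no further refinement is needed beyond this outline.
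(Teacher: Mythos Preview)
Your proof sketch is the standard Chernoff--Bernstein argument and is correct; the paper itself does not prove this lemma at all but simply cites it from \cite{janson2011random} as a black-box tool. Your observation about the exponent (that $(\mathbb{E}[X])^2$ should read $\mathbb{E}[X]$) is also apt and matches how the lemma is actually applied later in the paper.
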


Similar inequality also holds for hypergeometric distributions.

\begin{lemma}[Chernoff bound for hypergeometric distribution~\cite{janson2011random}]\label{lem:chernoff-hypergeometric}
    Let $X$ be a random variable that follows a hypergeometric distribution with parameters $N, K, n$. Then the following tail bound holds for every $\delta \in [0, \frac{3}{2}]$.

    $$Pr\left(|X - \mathbb{E}[X]| \geq \delta \mathbb{E}[X] \right) \leq 2 exp\left(-\frac{\delta^2 
 \mathbb{E}[X]}{3}\right).$$
\end{lemma}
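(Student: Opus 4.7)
The plan is to reduce the hypergeometric tail bound to the binomial Chernoff bound (\Cref{lem:chernoff}) via Hoeffding's classical convex order comparison. Recall that if $X$ is hypergeometric with parameters $N, K, n$, then $X$ counts the number of successes when drawing $n$ elements without replacement from a population of $N$ containing $K$ successes. The crucial input is the following fact due to Hoeffding (1963): for every convex function $\phi : \mathbb{R} \to \mathbb{R}$,
\[ \mathbb{E}[\phi(X)] \;\leq\; \mathbb{E}[\phi(Y)], \qquad Y \sim \text{Binomial}(n, K/N). \]
This can be proved either by an explicit coupling of sampling-with/without-replacement, or by writing $X$ as a conditional expectation of $Y$ given the multiset of draws and applying Jensen's inequality. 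Both proofs appear in the reference \cite{janson2011random}, and either would serve here.

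The first step I would carry out is to apply this comparison to the exponential function $\phi_t(x) = e^{tx}$, which is convex for every $t \in \mathbb{R}$. This yields the moment generating function domination $\mathbb{E}[e^{tX}] \leq \mathbb{E}[e^{tY}]$ for all real $t$. Combined with Markov's inequality, this immediately reduces the desired two-sided tail bound for $X$ to the corresponding Chernoff bound for $Y$. Since $\mathbb{E}[X] = \mathbb{E}[Y] = nK/N$, the parameter $\delta \mathbb{E}[X]$ appearing in the deviation is the same on both sides of the reduction, so no bookkeeping is required.

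The second step is then simply the standard binomial Chernoff calculation: optimize $e^{-t(1+\delta)\mu} \mathbb{E}[e^{tY}]$ in $t$ for the upper tail (taking $t = \log(1+\delta)$) and analogously for the lower tail. The remaining work is to verify the elementary inequalities $(1+\delta)\log(1+\delta) - \delta \geq \delta^2/3$ for $\delta \in [0, 3/2]$ and $(1-\delta)\log(1-\delta) + \delta \geq \delta^2/2$ for $\delta \in [0,1]$, which together produce the uniform exponent $-\delta^2 \mathbb{E}[X]/3$ for both tails. Summing the two tail probabilities yields the factor of $2$ in the conclusion.

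The only genuine obstacle is the convex-order comparison itself; once that is granted, the remainder is identical to the proof of \Cref{lem:chernoff} and so there is no additional difficulty. Accordingly, I would simply quote the comparison from \cite{janson2011random} and then mirror the computation in the independent case.
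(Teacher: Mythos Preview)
The paper does not prove this lemma; it simply quotes it as a known result from the cited reference \cite{janson2011random}, so there is no argument in the paper to compare against. Your outline via Hoeffding's convex-order comparison of sampling without replacement to the binomial case, followed by the standard MGF optimisation, is a correct and standard way to establish the bound and is essentially what the cited reference does.
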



\section{Fractional thresholds}\label{sec:fractional-threshold}

In this section, we prove \Cref{thm:threshold-uniformbound}.
The proof utilizes the techniques of Dross~\cite{dross2016fractional} developed to investigate the threshold of the fractional triangle decomposition of the graph.

\begin{proof}[Proof of \Cref{thm:threshold-uniformbound}]
    Let $H$ be a $3$-graph on the vertex set $V$ and let $G = H^{(2)}$. 
    For a copy $K$ of $K^{(3)}_4$ in $H$ on a vertex set $\{u_1, u_2, v_1, v_2\}$, we call a tuple $(K, \{u_1v_1, u_2v_2\})$ a \emph{rooted $K^{(3)}_4$ on $u_1v_1$ and $u_2v_2$.} Note that a copy of $K^{(3)}_4$ can be rooted in three different ways.

    Suppose $(K, \{u_1v_1, u_2v_2\})$ is a rooted $K^{(3)}_4$ in $H$ and $\eta$ is a positive real number, and $\psi : E(H) \to \mathbb{R}$ is a function. Increase each value of $\psi(\{u_1,v_1, u_2\})$ and $\psi(\{u_1, v_1, v_2\})$ by $\frac{\eta}{2}$ and decrease each value of $\psi(\{u_1, u_2, v_2\})$ and $\psi(\{v_1, u_2, v_2\})$ by $\frac{\eta}{2}$. We call this operation as \emph{$\eta$-flow-opertion} on $K$.
    By the $\eta$-flow-operation on $K$, we obtain a function $\psi': E(H) \to \mathbb{R}$ such that the following holds.

    \begin{enumerate}
            \item[$\bullet$] $\psi'(f) = \psi(f)$ for all $f\in E(H) \setminus E(K)$,
            \item[$\bullet$] $\sum_{e \subseteq f\in E(H)}\psi'(f) = \sum_{e \subseteq f\in E(H)} \psi(f)$ for all $e\in E(G)\setminus \{u_1v_1, u_2v_2\}$,
            \item[$\bullet$] $\sum_{\{u_1, v_1\}\subseteq f\in E(H)}\psi'(f) = \sum_{\{u_1, v_1\}\subseteq f\in E(H)} \psi(f) + \eta$, and $\sum_{\{u_2, v_2\}\subseteq f\in E(H)}\psi'(f) = \sum_{\{u_2, v_2\}\subseteq f\in E(H)}\psi(f) - \eta$. 
    \end{enumerate}

    Let $\delta = \frac{3 + \sqrt{57}}{12}$.
    We now assume the $3$-graph $H$ satisfies $\delta_{\mathrm{ess}}(H) \geq \delta n$. We may assume that for every edge ${v_1, v_2, v_3}$ of $H$, at least one of the pairs ${v_1v_2, v_2v_3, v_3v_1}$ has codegree exactly $\delta n$. If this is not the case, we simply remove the edge ${v_1, v_2, v_3}$ from $H$ to reduce the excess codegrees. Then all edges in $H$ are contained in at most $\delta n$ copies of $K^{(3)}_4$, hence contained in at most $3\delta n$ copies of rooted $K^{(3)}_4$.
    
    We now consider a constant weight function $\psi_0$ satisfying $\psi_0(e) = \omega$ for all $e\in E(H)$ where $\omega = \frac{|E(G)|}{3 |E(H)|}$.
    As we observed, we note that every edge in $H$ is contained in at most $3\delta n$ copies of rooted $K^{(3)}_4$. By the $\eta$-flow-operation, for every $3$-uniform edge $f$, if we take $\eta \leq \frac{2\omega}{3\delta n}$, then the final weights of all $3$-uniform edges of $H$ be nonnegative. 
    For every $e \in E(G)$, let $d_H(e)$ be the number of $3$-uniform edges of $H$ containing $e$. Consider an auxiliary graph $F$ on the vertex set $V(F) = E(G) \cup \{s, t\}$ where $s, t$ are distinct new vertices. 
    Let
    \begin{align*}
        E_1 &\defeq \{se: e\in E(G): d_H(e)  > \frac{1}{\omega}\},\\
        E_2 &\defeq \{et: e\in E(G): d_H(e) \leq \frac{1}{\omega}\},\\
        E_3 &\defeq \{ee': e, e'\in E(G), \text{ there is a rooted $K^{(3)}_4$ on $e$ and $e'$ in $H$ }\}.
    \end{align*}
    
    Let $E(F) = E_1 \cup E_2 \cup E_3$. We say that two vertices $s$ and $t$ as source and sink, respectively. For each edge $ee'$ in $E_3$, let the capacity of $ee'$ be $\frac{2w}{3\delta n}$. For each $se\in E_1$ and $e't\in E_2$, let the capacity of $se$ and $e't$ be $d_H(e) w - 1$ and $1 - d_H(e') w$, respectively.

    Let $M = \sum_{e\in E(G): se\in E_1} (d_H(e) \omega - 1)$. If we find a flow of value $M$ from $s$ to $t$, then it implies that there is a perfect fractional $STS$ of $H$ by applying $\eta$-flow-operations depending on the flow, adjusting the parameter $\eta$. Assume $F$ does not allow a flow of value $M$. Then by the max-flow min-cut theorem, there is a vertex partition $(X, Y)$ of $V(F)$ such that $s\in X$, $t\in Y$, and the sum of capacities of edges between $X$ and $Y$ is less than $M$. Let $X_0 = X\setminus \{s\}$ and $Y_0 = Y\setminus \{t\}$ and let $|X_0| = \ell$. Then $|Y_0| = |E(G)| - \ell$. Let $N_X$ be the average of $d_H(e)$ where $e$ is in $X_0$ and let $N_Y$ be the average of $d_H(e)$ where $e$ is in $Y_0$.

    We note that for every $e\in E(G)$, there are at least $\frac{d_H(e)(d_H(e) - 2(1 - \delta)n)}{2}$ edges $e'$ such that there exists rooted $K^{(3)}_4$ on $e$ and $e'$. Thus $F[X, Y]$ contains at least $\sum_{e\in X_0} \left( \frac{d_H(e)(d_H(e) - 2(1 - \delta)n)}{2} - \ell \right)$ edges of $E_3$. Thus, we have the following inequality.
    \begin{equation}\label{eq:1}
        \sum_{e\in X_0} \left( \frac{d_H(e)(d_H(e) - 2(1 - \delta)n)}{2} - \ell \right) \frac{2\omega}{3\delta n} + \sum_{e\in Y_0: se\in E_1} (d_H(e) w - 1) - \sum_{e\in X_0: et\in E_2} (d_H(e) \omega - 1) < M.
    \end{equation}
    Since $M = \sum_{e\in E(G): se\in E_1} (d_H(e) \omega - 1) = \sum_{e\in E(G): et\in E_2} (1 - d_H(e) \omega)$, from \eqref{eq:1}, we obtain the following inequality.

    \begin{equation*}
        \frac{2w}{3\delta n}\sum_{e\in X_0} d_H(e)(d_H(e) - 2(1 - \delta)n) - 2 \ell) - 2\sum_{e\in X_0} (d_H(e) \omega - 1) < 0.
    \end{equation*}
    
    As the convexity of the function $f(x) = x^2$ implies $N_X \leq \frac{1}{n}\sum_{e\in X_0} d_H(e)^2$, we have the following.
    \begin{equation}\label{eq:2}
        \frac{2\omega}{3\delta n} (N_X(N_X - 2(1 - \delta)n) - 2\ell) - 2(N_X \omega - 1) < 0.
    \end{equation}
    
    With the same reason on the $Y_0$ side, the following inequality also holds.
    \begin{equation}\label{eq:3}
        \frac{2\omega}{3\delta n} (N_Y(N_Y - 2(1 - \delta)n) - 2(|E(G)| - \ell)) - 2(1 - N_Y \omega) < 0.
    \end{equation}

    Then \eqref{eq:2} and \eqref{eq:3} imply the following two inequalities, respectively.
    \begin{equation}\label{eq:4}
        N_X(N_X - 2(1 - \delta)n) + \frac{3\delta n}{\omega} (1 - N_X \omega) < 2\ell.
    \end{equation}
    
    \begin{equation}\label{eq:5}
        2\ell < 2|E(G)| - N_Y(N_Y - 2(1 - \delta)n) + \frac{3\delta n}{\omega}(1 - N_Y \omega).
    \end{equation}

    Since $2|E(G)| < n^2$, from \eqref{eq:4} and \eqref{eq:5}, we obtain the following.
    \begin{equation}\label{eq:6}
        N_X(N_X - (2 + \delta)n) < n^2 - N_Y(N_Y -(2 - 5\delta)n)
    \end{equation}

    Since every edge $e\in E(G)$ is contained in at least $\delta n$ hyperedges of $H$, we have $\delta n \leq N_X, N_Y \leq n$. In order to deduce an inequality only cares about $\delta$, we observe that the left-hand side of \eqref{eq:6} is minimized when $N_X = n$ and the right-hand side of \eqref{eq:6} is maximized when $N_Y = \delta n$. Thus, we deduce the following inequality.

    \begin{equation}\label{eq:7}
        -(1 + \delta) < 1 - \delta^2 + \delta(2 - 5\delta).
    \end{equation}

    By solving \eqref{eq:7}, we have $\delta < \frac{3 + \sqrt{57}}{12}$, a contradiction.
\end{proof}


\section{Pseudorandom structures and exclusive absorbers}\label{sec:pseudorandomSTS+absorber}

In this section, we state and prove lemmas useful for finding approximated structures and executing the final absorption step. Since we plan to iteratively construct an approximated transversal $STS$ for each set in the vortex in each step, we need to show we can find approximated transversal $STS$ with respect to the vortex (\Cref{def:transversal-vortex}) structure. We can achieve this using the notion of pseudorandomness. In the following subsection, we show how we obtain a pseudorandom fractional $STS$.

\subsection{Pseudorandom fractional STS}\label{subsec:fractional matching}

The goal of this section is to prove the following lemma.

\begin{lemma}\label{lem:pseudorandom-STS}
    For every real number $\ve > 0$, there is $n_0 = n_0(\ve)$ such that the following holds for all $n\geq n_0$.
    Let $H$ be an $n$-vertex $3$-uniform simple hypergraph with $\delta_{\mathrm{ess}}(H) \geq (\delta_f(\ve) + \ve)n$ and $\delta(H^{(2)}) \geq (1 - \ve)n$.
    Then there is $\left(1, 0, \frac{\log^2 n}{n}\right)$-pseudorandom fractional $STS$ of $H$.
\end{lemma}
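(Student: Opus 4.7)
By \Cref{obs:correspondence}, finding a $(1,0,\log^2 n/n)$-pseudorandom fractional $STS$ of $H$ is equivalent to finding a $(1,0,\log^2 n/n)$-pseudorandom fractional matching of the auxiliary linear $3$-graph $H_{aux}$. The plan is to apply the definition of $\delta_f(\ve)$ iteratively, at each step removing the hyperedges that receive large weight in the previous perfect fractional $STS$, and then averaging the resulting sequence of perfect fractional $STS$s.

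Concretely, fix $\beta \defeq \log^2 n/(2n)$ and a large parameter $T$. Starting from $H_1 \defeq H$, inductively find a perfect fractional $STS$ $\phi_i$ of $H_i$, which exists by the definition of $\delta_f(\ve)$ provided $\delta_{ess}(H_i) \geq (\delta_f(\ve) + \ve/2)n$, and set $H_{i+1} \defeq H_i \setminus \{f \in E(H_i) : \phi_i(f) > \beta\}$. Since every pair $e$ satisfies $d^{\phi_i}(e) = 1$, at most $1/\beta = 2n/\log^2 n$ heavy hyperedges containing $e$ are discarded at each step, so the essential codegree decreases by at most $T/\beta$ over $T$ iterations; choosing $T \leq \ve\beta n/2 = \Theta(\ve \log^2 n)$ keeps the $\delta_f(\ve)$ hypothesis alive throughout.

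Set $\Phi \defeq \frac{1}{T}\sum_{i=1}^T \phi_i$. Then $d^{\Phi}(e) = 1$ for every pair $e$. For a hyperedge $f$ that stays below the $\beta$-threshold in every iteration in which it appears, we have $\Phi(f) \leq \beta$, matching the target. The trouble is with hyperedges that cross the threshold at exactly one iteration $i^*$: for $i < i^*$ we have $\phi_i(f) \leq \beta$; at $i^*$ we only know $\phi_{i^*}(f) \leq 1$; and for $i > i^*$ we have $\phi_i(f) = 0$. This yields only the weaker bound $\Phi(f) \leq \beta + 1/T$.

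The main obstacle is closing the $1/T$ gap between $\beta + 1/T$ and the target $\log^2 n/n$, since the codegree-preservation constraint caps $T$ at $O(\log^2 n)$, forcing $1/T$ to be significantly larger than $\beta$. I would close the gap by a cap-and-recover refinement: identify the set $F^* \defeq \bigcup_i \{f \in E(H_i) : \phi_i(f) > \beta\}$ of exceptional hyperedges (each pair lies in at most $T/\beta$ of them, so $H \setminus F^*$ still satisfies the codegree hypothesis), cap $\Phi$ at $\beta$ on every hyperedge, and recover the small residual demand by a scaled application of $\delta_f(\ve)$ to $H \setminus F^*$. Repeating this refinement polylogarithmically many times should drive the weight bound uniformly below $\log^2 n/n$. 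The delicate point throughout is balancing $\beta$ and $T$ at each level so that the codegree and weight budgets are simultaneously respected; the $\ve$-slack in the codegree hypothesis provides exactly the room needed for this calibration.
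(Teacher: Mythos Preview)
Your setup via $H_{aux}$ and the heavy-edge-removal averaging is exactly the paper's starting point, and you correctly identify that a single round of averaging only yields $\|\Phi\| \leq \beta + 1/T$ with $1/T \gg \beta$. The gap is in the cap-and-recover step: after capping $\Phi$ at $\beta$, the residual demand $\rho_e = 1 - d^{\Phi'}(e)$ is \emph{not uniform} across pairs $e$, whereas a scaled perfect fractional $STS$ has all pair-degrees equal, so it cannot match non-uniform deficits. Concretely, if each $\phi_i$ concentrates nearly all its mass at $e$ on a single hyperedge (a different one at each step, since the previous one is removed), then after capping one gets $\rho_e \approx 1 - T\beta \approx 1$, while other pairs may have $\rho_{e'} \approx 0$; no scalar multiple of a perfect fractional $STS$ fills both. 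There is no evident way to realize a prescribed non-uniform demand vector using only the hypothesis that codegree-dense subhypergraphs admit perfect fractional $STS$s, so the recovery step as described does not go through, and iterating it polylogarithmically many times does not help because the first iteration already fails.

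The paper's remedy (\Cref{lem:frac to int matching}) avoids capping altogether by strengthening the inductive hypothesis: it proves that for every $F$ with $\Delta(F) \leq d$, the hypergraph $H_{aux} \setminus F$ has a perfect fractional matching with $\|\cdot\| \leq \alpha(d)$. One round of your averaging, now using only \emph{two} matchings each already known to satisfy $\|\cdot\| \leq \alpha$, yields $\alpha_{\text{new}} \leq \gamma + (\alpha - \gamma)/2$ at the cost of $1/\gamma$ from the degree budget. Iterating $t = \lfloor \gamma C \rfloor = \Theta(\ve \log^2 n)$ times with $\gamma = \frac{\log^2 n}{3n}$ and $C = \frac{\ve n}{2}$ drives the excess down to $2^{-t} \ll \log^2 n / n$. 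The key point you were missing is that the degree budget $C$ should be spent in small increments of $1/\gamma$ over many rounds, each round feeding the improved matchings into the next, rather than all at once in a single averaging.
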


To prove \Cref{lem:pseudorandom-STS}, we utilize pseudorandom fractional matchings.
The following lemma demonstrates how to obtain a pseudorandom fractional matching by using many perfect fractional matching.

\begin{lemma}\label{lem:frac to int matching}
    Let $C$ and $D$ be positive integers and $\gamma, \delta, \eta$ be real numbers in the interval $(0, 1)$ which satisfies the inequality $\delta D \left(\gamma + 2^{-\gamma C} \right) < \frac{\eta}{2}$. 
    Let $H$ be a multi-hypergraph with $\delta(H) \geq D$ and $\Delta_2(H) \leq \delta D$. Assume for every subgraph $F$ of $H$ with $\Delta(F) \leq C$, the sub-hypergraph $H \setminus F$ has a perfect fractional matching. Then $H$ has a $(1, 0, \eta)$-pseudorandom fractional matching $\psi$ with $\lVert \psi \rVert \leq \gamma + 2^{- \lfloor \gamma C \rfloor}$.
\end{lemma}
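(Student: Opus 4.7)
The plan is to obtain $\psi$ as the uniform average $\frac{1}{K}\sum_{i=1}^{K}\mu_i$, where $K \defeq 2^{\lfloor \gamma C\rfloor}$ and each $\mu_i$ is a perfect fractional matching of $H\setminus F_i$ supplied by the hypothesis, with $F_i$ the set of edges on which the partial sum $\psi^{(i-1)} \defeq \frac{1}{K}\sum_{j<i}\mu_j$ has already accumulated weight at least $\gamma$. The degenerate case $\gamma C < 1$ is disposed of immediately: then $\lfloor \gamma C\rfloor = 0$, so $2^{-\gamma C} > 1/2$ and the hypothesis forces $\delta D < \eta$. Any perfect fractional matching of $H$ (existing by applying the hypothesis with $F = \emptyset$) then has $\lVert\psi\rVert \leq 1 \leq \gamma + 2^{-\lfloor \gamma C\rfloor}$ and $\Delta_2^{\psi}(H) \leq \Delta_2(H) \leq \delta D < \eta$, which suffices. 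From now on assume $\gamma C \geq 1$, equivalently $1/\gamma \leq C$.

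Start from $\psi^{(0)} \equiv 0$ and iterate for $i = 1, \ldots, K$. At step $i$ define $F_i \defeq \{e \in E(H) : \psi^{(i-1)}(e) \geq \gamma\}$. The key observation is that $d^{\psi^{(i-1)}}(v) = (i-1)/K \leq 1$ for every vertex $v$, and every edge of $F_i$ through $v$ contributes at least $\gamma$ to this sum, so $\Delta(F_i) \leq 1/\gamma \leq C$. The hypothesis therefore yields a perfect fractional matching $\mu_i$ of $H\setminus F_i$; in particular $\mu_i(e) = 0$ for $e\in F_i$ and $\mu_i(e)\leq 1$ in general because $\mu_i(e)\leq d^{\mu_i}(v) = 1$ for any $v\in e$. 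Set $\psi^{(i)} \defeq \psi^{(i-1)} + \tfrac{1}{K}\mu_i$ and let $\psi \defeq \psi^{(K)}$.

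By construction $d^{\psi}(v) = \tfrac{1}{K}\sum_{i=1}^{K}d^{\mu_i}(v) = 1$ at every $v$, so $\psi$ is a perfect fractional matching of $H$. For the edge-weight bound, fix $e\in E(H)$ and observe that once $e$ enters some $F_i$ its weight is frozen, since $\mu_i(e) = 0$ forces $e\in F_{i+1}$, $F_{i+2},\ldots$ as well. Letting $i^\ast$ be the largest index with $e\notin F_{i^\ast}$ (or declaring $\psi(e) = 0$ if none exists), we get $\psi(e) = \psi^{(i^\ast-1)}(e) + \tfrac{1}{K}\mu_{i^\ast}(e) < \gamma + \tfrac{1}{K} = \gamma + 2^{-\lfloor \gamma C\rfloor}$. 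Using $2^{-\lfloor \gamma C\rfloor}\leq 2\cdot 2^{-\gamma C}$ together with the hypothesis then yields $\Delta_2^{\psi}(H) \leq \lVert\psi\rVert\cdot \Delta_2(H) \leq 2\delta D(\gamma + 2^{-\gamma C}) < \eta$, giving the required $(1,0,\eta)$-pseudorandomness. The only delicate ingredient is maintaining $\Delta(F_i)\leq C$ at every step so the hypothesis can be applied; this is exactly what forces the choice $K = 2^{\lfloor \gamma C\rfloor}$ and the case split on whether $\gamma C \geq 1$.
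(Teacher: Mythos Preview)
Your proof is correct, and in fact cleaner than the paper's. The paper proceeds by a nested induction: it first proves an auxiliary claim that if $\omega(H\setminus F)\leq\alpha$ for all $F$ with $\Delta(F)\leq d$, then $\omega(H\setminus F')\leq \gamma+\frac{\alpha-\gamma}{\lfloor(d-d')\gamma\rfloor+1}$ for all $F'$ with $\Delta(F')\leq d'$, and then applies this claim $t=\lfloor\gamma C\rfloor$ times along the sequence $d_i=C-i/\gamma$, halving the excess $\alpha-\gamma$ at each step to reach $\alpha_t=\gamma+(1-\gamma)2^{-t}$. Unwinding that recursion, the final $\psi$ is implicitly an average of $2^t$ perfect fractional matchings, but the bookkeeping tracks the full family $\{\omega(H\setminus F):\Delta(F)\leq d_i\}$ at every level.

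You bypass the auxiliary claim entirely by averaging the $K=2^{\lfloor\gamma C\rfloor}$ matchings in a single pass, freezing an edge as soon as its running partial average reaches $\gamma$. The key observation that $\Delta(F_i)<1/\gamma\leq C$ follows in one line from $d^{\psi^{(i-1)}}(v)=(i-1)/K<1$, so the hypothesis applies at every step; and the freezing mechanism immediately gives $\lVert\psi\rVert<\gamma+1/K$. This is more elementary and yields the same bound. The paper's route has the minor advantage of also bounding $\omega(H\setminus F')$ for nonempty $F'$ at intermediate levels, but that extra strength is never used downstream.
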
 

The proof of the above lemma is based on the iterative application of the following observation.

\begin{observation}\label{obs:normalized fractional matching}
    Let $H$ be a hypergraph. Let $\psi_1, \dots, \psi_m$ be perfect fractional matchings of $H$. Then $\frac{1}{m}\sum_{i = 1}^m\psi_i$ is also a perfect fractional matching of $H$.
\end{observation}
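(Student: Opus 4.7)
The plan is to verify the two defining properties of a perfect fractional matching directly from the definition, so this will essentially be a one-line linearity argument. First, I would check that $\psi \defeq \frac{1}{m}\sum_{i=1}^m \psi_i$ is a legitimate weighted subgraph of $H$, i.e.\ that $\psi(e) \in [0,1]$ for every $e \in E(H)$. Since each $\psi_i$ is a weighted subgraph we have $\psi_i(e) \in [0,1]$, and the interval $[0,1]$ is convex, so the average $\psi(e)$ also lies in $[0,1]$.

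Second, I would verify the perfect matching condition $d^\psi(v) = 1$ for every $v \in V(H)$. Because the weighted degree functional is linear in the weight function, we have
$$d^\psi(v) = \sum_{v \in e \in E(H)} \frac{1}{m}\sum_{i=1}^m \psi_i(e) = \frac{1}{m}\sum_{i=1}^m d^{\psi_i}(v) = \frac{1}{m}\sum_{i=1}^m 1 = 1,$$
where the last equality uses the assumption that each $\psi_i$ is a perfect fractional matching of $H$. Together these two checks show that $\psi$ is a perfect fractional matching.

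There is no substantive obstacle here; the statement is an immediate consequence of the linearity of the weighted degree together with the convexity of $[0,1]$, which is precisely why it is recorded as an observation. Its role is to be invoked inside the proof of \Cref{lem:frac to int matching}, where averaging many perfect fractional matchings (one for each iteration in which heavy edges are removed) is used to produce a single fractional matching whose weights are controlled and whose degrees are uniformly equal to $1$.
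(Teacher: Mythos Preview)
Your proof is correct and is exactly the intended justification; the paper itself states this as an observation without proof, so your linearity/convexity argument is precisely the trivial check the authors had in mind.
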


\begin{proof}[Proof of \Cref{lem:frac to int matching}]
    
    Let $A$ be a hypergraph. We denote by $\calF_{A}$ the set of all perfect fractional matchings of $A$.
    We define $\omega(A) \defeq \min \{\lVert \psi \rVert : \psi \in \calF_{A}\}$ if $\calF_{A} \neq \emptyset$ and otherwise, $\omega(A) \defeq \infty$. We note that $\omega(A)$ is well-defined since it is defined by the minimum of a continuous function on the compact domain, so the minimum can be attained.
    For a positive integer $d$, we denote by $H(d)$ the set of sub-hypergraph $F$ of $H$ such that $\Delta(F) \leq d$. Then the following claim holds.

    \begin{claim}\label{clm:inductive-regularization}
        Let $d$ and $d'$ be two positive integers such that $d' \leq d$. If $\max \{\omega(H\setminus F) : F \in H(d)\} \leq \alpha$, then $\max \{\omega(H\setminus F') : F' \in H(d')\} \leq \gamma + \frac{\alpha - \gamma}{\lfloor (d - d')\gamma \rfloor + 1}$    
    \end{claim}

    \begin{claimproof}
        Let $m = \lfloor (d - d')\gamma \rfloor$. Fix an arbitrary hypergraph $F' \in H(d')$.
        We will iteratively choose suitable perfect fractional matchings $\psi_{0}, \psi_{1}, \dots, \psi_{m}$ from $H\setminus F'$. We initialize $F_0 = F'$ and $H_0 = H\setminus F_0$. Assume we have $H_i$ and $F_i$ such that $\Delta(F_i) \leq d$ where $0 \leq i \leq m$. Then $F_i \in H(d)$, the hypergraph $H_i$ has a perfect fractional matching $\varphi$ with $\lVert \varphi \rVert \leq \alpha$. We set such perfect fractional matching as $\psi_{i}$. We now update $F_{i+1} \defeq F_i \cup \{e\in E(H_i): \psi_{i}(e) > \gamma \}$ and define $H_{i + 1} = H\setminus F_{i+1}$.
        We note that since $\psi_i$ is a perfect fractional matching, for every vertex $v\in V(\calH)$, we have $d_{F_{i+1}}(v) \leq d_{F_{i}}(v) + \lfloor \frac{1}{\gamma} \rfloor$. Thus, $\Delta(F_{m}) \leq \Delta(F_0) + m\lfloor \frac{1}{\gamma} \rfloor \leq d' + m \lfloor \frac{1}{\gamma} \rfloor \leq d' + d - d' = d$. Thus, we obtain perfect fractional matchings $\psi_0, \dots, \psi_m$ of $H\setminus F'$. Let $\psi = \frac{1}{m+1} \sum_{i = 0}^m \psi_i$. Then by \Cref{obs:normalized fractional matching}, $\psi$ is also a perfect fractional matching of $H\setminus F'$. 
        
        We now claim that $\lVert \psi \rVert$ is at most $\gamma + \frac{\alpha - \gamma}{m+1}$. For an edge $e\in H\setminus F'$, denote by $w_e$ the vector $(\psi_0(e), \psi_1(e), \dots, \psi_m(e))$. If $e \notin H_i$, then we simply define $\psi_i(e) = 0$. For some $a \in \{0, 1, \dots, m\}$, if $\psi_a(e) > \gamma$, then $\psi_a(e) \leq \alpha$ and $\psi_i(e) \leq \gamma$ for every $i < a$ and $\psi_{j}(e) = 0$ for every $j > a$. Thus, we have $\psi(e) = \frac{1}{m+1}\sum_{i = 0}^m \psi_i(e) \geq \frac{1}{m+1}\left(m\gamma + \alpha \right) = \gamma + \frac{\alpha - \gamma}{m+1}$. Since $e$ is an arbitrary edge of $H\setminus F'$ and $F'$ is an arbitrary hypergraph in $H(d')$. This proves the claim.
    \end{claimproof}

    We now use \Cref{clm:inductive-regularization} iteratively to complete the proof. Let $t = \lfloor \gamma C \rfloor$. For every $i \in \{0, 1, \dots, t\}$, let $d_i = C - \frac{i}{\gamma}$ and $\alpha_i = \gamma + \frac{1 - \gamma}{2^i}$. We claim that $\max\{\omega(H \setminus F) : F \in H(d_i)\} \leq \alpha_i$ for every $i\in \{0, 1, \dots, t\}$. If $i = 0$, then by the condition of $H$, for every sub-hypergraph $F \in H(d_0)$, where $d_0 = C$, $H\setminus F$ has a perfect fractional matching. Thus, $\max\{\omega(H \setminus F) : F \in H(d_0)\} \leq 1 = \alpha_0$. We now use induction on $i$. We apply \Cref{clm:inductive-regularization} where $d_{i-1}$, $d_i$, and $\alpha_{i-1}$ play role of $d$, $d'$, and $\alpha$, respectively. Then $\max\{\omega(H \setminus F) : F \in H(d_i)\} \leq \gamma + \frac{\alpha_{i-1} - \gamma}{\lfloor (d_{i-1} - d_i)\gamma \rfloor + 1} = \gamma + \frac{1}{2}\left(\gamma + \frac{1-\gamma}{2^{i-1}} - \gamma \right) = \gamma + \frac{1 - \gamma}{2^i} = \alpha_i$. Thus, by induction, for every $i\in \{0, 1, \dots, t\}$, we have $\max\{\omega(H \setminus F) : F \in H(d_i)\} \leq \alpha_i$. This implies that $\omega(H) = \max \{\omega(H\setminus F) : F \in H(0)\} \leq \max \{\omega(H\setminus F) : F \in H(t)\} \leq \alpha_t = \gamma + \frac{1-\gamma}{2^t} \leq \gamma + \frac{1}{2^{\lfloor \gamma C \rfloor}}$.
    Thus, there is a perfect fractional matching $\psi$ in $H$, where $\lVert \psi \rVert \leq \gamma + \frac{1}{2^{\lfloor \gamma C \rfloor}}$.

    In order to finish the proof, it suffices to show that for every pair of distinct vertices $u, v \in V(H)$, the inequality $d_2^{\psi}(\{u, v\}) < \eta$ holds. Since $\Delta_2(H) \leq \delta D$, by our construction of $\psi$, we have $\sum_{\{u, v\}\subset e \in E(H)} \psi(e) \leq \delta D \left(\gamma + \frac{2}{2^{ \gamma C }} \right)$ for every pair of distinct vertices $u, v$. Since $2\delta D \left(\gamma + 2^{- \lfloor \gamma C \rfloor} \right) < \eta$, we conclude that $\Delta_2^{\psi}(H) < \eta$. This completes the proof.
\end{proof}

\begin{proof}[Proof of \Cref{lem:pseudorandom-STS}]
    Consider $H_{aux}$. Then $H_{aux}$ is a linear hypergraph and $\delta(H_{aux})$ is same as the minimum codegree of $e$ in $H$ where $e\in E(H^{(2)})$, thus we have $\delta(H_{aux}) \geq (\delta_f(\ve) + \ve)n$. We observe that there is an one-to-one correspondence between $E(H_{aux})$ and $E(H)$. 
    Moreover, if we remove $x$ many edges that are incident with $e\in V(H_{aux})$, then the codgree of $e$ in $H$ decreases exactly $x$. Thus, for every $3$-uniform sub-hypergraph $F$ in $H_{aux}$ with $\Delta(F) \leq \frac{\ve}{2}n$, all edges $e$ in the corresponding hypergraph $H'$ of $H_{aux} \setminus F$ has codegree at least $\left(\delta_f(\ve) + \frac{\ve}{2}\right)n$, where $H'_{aux} = H_{aux}\setminus F$. 
    By the definition of $\delta_f(\ve)$, there is a perfect fractional $STS$ in $H'$. Note that $H^{(2)} = H'^{(2)}$. By \Cref{obs:correspondence}, there is also perfect fractional matching in $H_{aux}\setminus F$. 
    Thus, we can apply \Cref{lem:frac to int matching} to $H_{aux}$ with parameters $\frac{\ve}{2}n, \delta_f(\ve) n, \frac{log^2 n}{3 n}, \frac{1}{\delta_f(\ve) n}, \frac{log^2 n}{n}$ playing the role with $C, D, \gamma, \delta, \eta$, respectively. By \Cref{lem:frac to int matching}, there is a $\left(1, 0, \frac{\log^2 n}{n}\right)$-pseudorandom fractional matching in $H_{aux}$. 
    Thus, by \Cref{obs:correspondence}, there is a $\left(1, 0, \frac{\log^2 n}{n} \right)$-pseudorandom $STS$ in $H$.
\end{proof}


\subsection{Exclusive absorbers}

In this section, we will prove that for every $K_3$-divisible graph $G$, there is a sufficiently sparse structure $A$ that can `absorb' $G$. That is, both $A^{(2)}$ and $A^{(2)}\setminus G$ have $K_3$-decomposition in $A$. We will take $G$ as a graph that is induced by uncovered edges by $3$-uniform edges in the final stage of the proof of \Cref{thm:main}. 

For a graph $G$ and a $3$-graph $H$, we say $H$ and $G$ are \emph{$K_3$-disjoint} if there is no hyperedge $e = \{v_1, v_2, v_3\}$ in $H$ such that $\{v_1v_2, v_2v_3, v_3v_1\} \subseteq E(G)$.

\begin{definition}
    Let $G$ be a graph. We say a $3$-graph $A$ is a \emph{$K_3$-absorber} for $G$ if $A$ and $G$ are $K_3$-disjoint, $A^{(2)}$ contains $G$ as an induced subgraph, and both $A^{(2)}$ and $A^{(2)} \setminus G$ have a $K_3$-decomposition in $A$.  
\end{definition}

The purpose of this section is to find a sparse absorber $A$ for every $K_3$-divisible graph. We use the following notion of edge-degeneracy to measure the sparseness.

\begin{definition}\label{def:edge-degeneracy}
    Let $H$ be a $3$-graph and let $F$ be an induced subgraph of $E(H^{(2)})$. The \emph{$F$-rooted edge-degeneracy} of $H$ is the smallest non-negative integer $d$ such that there is an ordering $v_1, v_2, \dots $, of $V(H) \setminus V(F)$ such that the following inequality holds for all $i$.

    $$|\{\{u, w\} \subseteq \left(V(F)\cup \{v_j : 1 \leq j < i\}\right) : \{v_i, u, w\}\in E(H) \}| \leq d.$$
\end{definition}

We now state the following lemma.

\begin{lemma}\label{lem:sparse-K3-absorber}
    For each $K_3$-divisible graph $G$, there exists a $K_3$-absorber $A$ for $G$ whose $G$-rooted edge-degeneracy is at most $4$.
\end{lemma}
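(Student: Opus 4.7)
The plan is to construct $A$ as a disjoint union of bounded-size absorber gadgets attached at small groups of edges of $G$ via fresh vertices. The key building block is the \emph{octahedron gadget}: for a triangle $T = \{uv, vw, uw\}$ and three fresh vertices $a, b, c$, consider the $3$-graph on $\{u, v, w, a, b, c\}$ whose candidate hyperedges come from both triangle decompositions of the octahedron $K_{2,2,2}$ on the tripartition $(\{u, a\}, \{v, b\}, \{w, c\})$ and of $K_{2,2,2} \setminus T$. The first decomposition includes the triangle $T$ and covers the whole $2$-shadow; the second avoids $T$ and covers the $2$-shadow minus $T$. Together these form a $K_3$-absorber for $T$, and $K_3$-disjointness from $T$ holds because every hyperedge of the gadget uses at least one of $a, b, c$.

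Since $G$ need not contain any triangles, the next step is to route each edge of $G$ through a virtual triangle on fresh vertices. A parity obstruction prevents absorbing a single edge in isolation: removing one edge changes $|E(A^{(2)})|$ by $1$, so $A^{(2)}$ and $A^{(2)} \setminus \{e\}$ cannot both be $K_3$-divisible. This forces gadgets to absorb edges in $K_3$-divisible chunks. Using that $G$ is $K_3$-divisible (so $|E(G)| \equiv 0 \pmod 3$ and every vertex has even degree), I would extract a collection of groups of edges of $G$ such that each group, after adding a constant number of auxiliary edges on fresh vertices, becomes a $K_3$-divisible configuration absorbable by a single octahedron gadget. Taking $A$ to be the disjoint union of these local gadgets, the two global triangle decompositions of $A^{(2)}$ and of $A^{(2)} \setminus G$ are obtained by toggling each gadget between its two modes in parallel.

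For the $G$-rooted edge-degeneracy bound of $4$, I order the fresh vertices gadget by gadget, and within each gadget in a fixed internal order: first the routing vertices that introduce the auxiliary edges, then the three octahedron vertices $a, b, c$. When a fresh vertex $x$ is added, the only hyperedges of $A$ whose other two vertices are already present are hyperedges of the single local gadget containing $x$. A direct count inside the octahedron gadget shows that each fresh vertex lies in at most $4$ such hyperedges, which delivers the desired bound on the $G$-rooted edge-degeneracy of $A$.

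The main obstacle is producing the partition of $E(G)$ into absorbable chunks for arbitrary $K_3$-divisible $G$ --- triangle-free $G$ in particular demands the insertion of auxiliary edges via fresh vertices to create virtual triangles, and this insertion must itself be absorbable and must not push the edge-degeneracy above $4$. I would handle this by a recursive construction, peeling off the auxiliary edges into further octahedron gadgets placed on yet further fresh vertices until the configuration closes up. The closure is guaranteed after a bounded number of levels because each round of auxiliary edges is again $K_3$-divisible by design, and all fresh vertices outside $V(G)$ can be linearly ordered so that each contributes at most the budgeted $4$ hyperedges on already-present pairs.
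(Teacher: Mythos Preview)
Your octahedron gadget correctly absorbs a single triangle with edge-degeneracy $4$, but the sketch breaks down at exactly the point you flag as ``the main obstacle'': you never actually describe how to partition $E(G)$ into absorbable chunks, and the recursive patching you propose has no termination argument. Saying ``the closure is guaranteed after a bounded number of levels because each round of auxiliary edges is again $K_3$-divisible'' is not a proof: $K_3$-divisibility of the auxiliary edges tells you nothing about their structure becoming simpler at each round, and there is no evident measure that decreases. For a concrete obstruction, note that a generic $K_3$-divisible $G$ (say a long even cycle) has no triangles at all, so every absorbable chunk must be manufactured via fresh vertices; the auxiliary edges you introduce then form another $K_3$-divisible graph of comparable complexity, and your recursion simply regenerates the problem.

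The paper's proof avoids this trap by abandoning the attempt to absorb $G$ locally. Instead it introduces \emph{transformers}: a $3$-graph $T$ that lets you trade one $K_3$-divisible graph $S$ for another $S'$ in the sense that both $T^{(2)}\setminus S$ and $T^{(2)}\setminus S'$ have $K_3$-decompositions in $T$. The absorber is then built as a chain of transformers $G \to G^{**} \to S_{3m} \to M'$, where $G^{**}$ is the $3$-subdivision of $G$ (obtained by two rounds of attaching one fresh vertex per edge, edge-degeneracy $1$), $S_{3m}$ is a bouquet of $3m$ four-cycles sharing a single vertex (reachable from $G^{**}$ via an edge-bijective homomorphism, which is what the transformer lemma needs), and $M'$ is a disjoint union of $m$ triangles, which is trivially decomposed. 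The transformer between two graphs related by an edge-bijective homomorphism is built cycle-by-cycle with three fresh vertices per edge of the cycle, and a direct check in that explicit gadget gives the degeneracy bound $4$. The point is that the chain terminates in $M'$ after a fixed finite number of transformations, independent of $G$; your recursion has no analogue of this terminal object.
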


In order to prove this, we define a specific structure called a transformer. Transformers will be useful to build a desired absorber. The idea of using transformers to construct exclusive absorbers was introduced in~\cite{barber2016edge,barber2020minimalist,glock2023existence}. Below is the definition of transformers.

\begin{definition}
    Let $S$ and $S'$ be two vertex-disjoint $K_3$-divisible graphs. We say a $3$-graph $T$ is a \emph{transformer of $(S, S')$} or \emph{$(S, S')$-transformer} if $T$ and $S\cup S'$ are $K_3$-disjoint, $T^{(2)}$ contains $S\cup S'$ as an induced subgraph, and both $T^{(2)}\setminus S$ and $T^{(2)}\setminus S'$ have a $K_3$-decomposition in $T$. 
\end{definition}

For two simple graphs $G_1$ and $G_2$, we write $G_1 \rightsquigarrow G_2$ if $|E(G_1)| = |E(G_2)|$ and there is a graph homomorphism $\phi: V(G_1) \to V(G_2)$ such that for every $e' \in E(G_2)$, there is an edge $e \in E(G_1)$ such that $\phi(e) = e'$. In other words, $\phi$ is an edge-bijective homomorphism.  

\begin{lemma}\label{lem:transformer-exist}
    For every pair of vertex-disjoint $K_3$-divisible graphs $(S, S')$ with $S\rightsquigarrow S'$, there is a transformer $T$ of $(S, S')$ which has the $(S\cup S')$-rooted edge-degeneracy at most $4$. 
\end{lemma}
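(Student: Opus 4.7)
The plan is to build $T$ as a union of local gadgets, one per edge of $S$, glued along the shared vertex set $V(S)\cup V(S')$. Fix the edge-bijective homomorphism $\phi\colon V(S)\to V(S')$; for each edge $e=uv\in E(S)$ with $\phi(e)=u'v'\in E(S')$, introduce a constant-size set $W_e$ of fresh auxiliary vertices, pairwise disjoint and disjoint from $V(S)\cup V(S')$. On $\{u,v,u',v'\}\cup W_e$ construct a small $3$-graph $T_e$ satisfying the three local properties: (i) $T_e^{(2)}$ contains the edges $uv$ and $u'v'$; (ii) every hyperedge of $T_e$ meets $W_e$; (iii) both $T_e^{(2)}\setminus\{uv\}$ and $T_e^{(2)}\setminus\{u'v'\}$ admit $K_3$-decompositions whose triangles are hyperedges of $T_e$. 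Such a gadget can be realised with only $O(1)$ auxiliary vertices and hyperedges, in the spirit of the transformer gadgets used throughout the iterative-absorption literature.

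Define $T:=\bigcup_{e\in E(S)}T_e$. Because the $W_e$'s are pairwise disjoint and every hyperedge of each $T_e$ meets $W_e$, the only hyperedges of $T$ containing two vertices of $V(S)$ (respectively $V(S')$) are the ``defining'' hyperedges sitting above an edge of $S$ (respectively $S'$); it follows that $S\cup S'$ is an induced subgraph of $T^{(2)}$ and that $T$ is $K_3$-disjoint from $S\cup S'$. To decompose $T^{(2)}\setminus S$, take the union over $e\in E(S)$ of the local decomposition of $T_e^{(2)}\setminus\{uv\}$ from (iii). Distinct gadgets share vertices only in $V(S)\cup V(S')$, and the only edges of $T_e^{(2)}$ inside $V(S)\cup V(S')$ are $uv$ and $u'v'$ (the former is removed, the latter belongs to no other gadget because $\phi$ is edge-bijective), so this union is a genuine $K_3$-decomposition. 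The decomposition of $T^{(2)}\setminus S'$ is symmetric.

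For the edge-degeneracy bound, order the vertices of $V(T)\setminus V(S\cup S')$ by listing the sets $W_e$ in some arbitrary order, with a fixed internal order inside each $W_e$ tailored to the gadget. Since any auxiliary vertex $w\in W_e$ lies only in hyperedges of $T_e$, one only needs to verify inside each single $T_e$ that every $w\in W_e$ is contained in at most $4$ hyperedges whose other two vertices are strictly earlier in the order; this can be arranged by choosing the internal order of $W_e$ to exploit the fixed local structure of $T_e$.

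The main obstacle is the explicit design of the local gadget $T_e$ that simultaneously meets (i)--(iii) and admits an internal ordering witnessing edge-degeneracy at most $4$. This is a finite combinatorial design task: both $|T_e^{(2)}\setminus\{uv\}|$ and $|T_e^{(2)}\setminus\{u'v'\}|$ must be divisible by $3$, the two triangle-decompositions must live inside one common hyperedge set, and the internal ordering of $W_e$ must respect the degeneracy bound. Once this finite gadget is pinned down, the global assembly above delivers the transformer with all required properties.
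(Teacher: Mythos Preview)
Your per-edge gadget $T_e$ cannot exist: it is ruled out by a degree-parity obstruction. Since $S$ and $S'$ are vertex-disjoint, the four vertices $u,v,u',v'$ are distinct. For $T_e^{(2)}\setminus\{uv\}$ to admit a $K_3$-decomposition it must be $K_3$-divisible, so in particular $u$ has even degree there, i.e.\ $d_{T_e^{(2)}}(u)$ is odd. For $T_e^{(2)}\setminus\{u'v'\}$ to be $K_3$-divisible, $u$ must have even degree there as well; but removing $u'v'$ does not touch $u$, so $d_{T_e^{(2)}}(u)$ is even. These two requirements contradict each other, so no gadget satisfying your (i)--(iii) exists. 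The ``finite combinatorial design task'' you defer to is therefore infeasible, and the global assembly built on it collapses.

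The paper avoids this obstruction by not working edge by edge. It first decomposes $S$ into edge-disjoint cycles (possible because every vertex of a $K_3$-divisible graph has even degree) and builds one gadget per cycle $C=v_1\cdots v_\ell$ together with its image $\phi(C)$. In a cycle every vertex has degree $2$, so deleting all of $E(C)$ lowers each $d(v_i)$ by $2$ and parity is preserved; symmetrically for $\phi(C)$. Concretely, the paper introduces three auxiliary vertices $x_i,y_i,z_i$ per cycle vertex $v_i$ (not per edge), writes down eight explicit families of triples $E_1,\dots,E_8$, and checks that $E_1\cup E_3\cup E_5\cup E_7$ and $E_2\cup E_4\cup E_6\cup E_8$ give the two required $K_3$-decompositions, while the ordering $x_1,y_1,z_1,x_2,y_2,z_2,\dots$ witnesses $(S\cup S')$-rooted edge-degeneracy $4$. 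The crucial difference from your plan is that consecutive edges of the cycle share auxiliary vertices (e.g.\ $x_{i+1}$ appears in hyperedges attached to both $v_iv_{i+1}$ and $v_{i+1}v_{i+2}$), which is exactly what lets the parity work out.
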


\begin{proof}
    Let $\phi: V(S)\to V(S')$ be the graph homomorphism such that $\phi(E(S)) = E(S')$.
    Since $S$ is $K_3$-divisible, every vertex of $S$ has an even degree. Thus, $E(S)$ can be decomposed into cycles. Let $C_1, \dots, C_m$ be edge-disjoint cycle such that $S = \bigcup_{i=1}^m C_i$. Assume for every $i\in [m]$, there is a transformer $T_i$ for $(C_i, \phi(C_i))$ with $(C_i\cup \phi(C_i))$-rooted edge-degeneracy at most $4$. Then $\bigcup_{i = 1}^m T_i$ is a desired transformer. Thus, we may assume $S$ is a cycle $v_1, v_2, \dots, v_{\ell}$. We note that $E(\phi(S)) = \{\phi(v_1v_2), \dots, \phi(v_{\ell - 1}v_{\ell}), \phi(v_{\ell}v_1)\}$. We now define sets of edges as follows:
    \begin{align*}
        E_1& \defeq \{\{\phi(v_i), \phi(v_{i+1}), x_{i+1}\} : i\in [\ell]\}\\ 
        E_2& \defeq \{\{x_i, y_i, \phi(v_{i})\} : i\in [\ell]\}\\ 
        E_3& \defeq \{\{y_i, z_i, \phi(v_{i})\} : i \in [\ell]\}\\
        E_4& \defeq \{\{z_i, x_{i+1}, \phi(v_{i})\} : i \in [\ell]\}\\
        E_5& \defeq \{\{v_i, x_{i}, y_i\} : i \in [\ell]\}\\ 
        E_6& \defeq \{\{v_i, y_i, z_i\} : i \in [\ell]\}\\
        E_7& \defeq \{\{v_i, z_i, x_{i+1}\} : i \in [\ell]\}\\
        E_8& \defeq \{\{v_i, v_{i+1}, x_{i+1}\}\} : i \in [\ell]\}
    \end{align*}

Let $T$ be an $3$-graph with edges $\bigcup_{i\in [8]} E_i$. Then $T$ has $(S\cup \phi(S))$-rooted edge-degeneracy $4$ with the ordering $x_1, \dots, x_{\ell}, y_1, \dots, y_{\ell}, z_1, \dots, z_{\ell}$. For verifying edge-degeneracy $4$ with respect to this ordering, we build edges $E_1 \cup E_8$, $E_2\cup E_5$, and $E_3\cup E_4 \cup E_6 \cup E_7$ in turns. From this, each $X_i$ and $y_i$ has edge-degeneracy $2$ and each $z_i$ has edge-degeneracy $4$. Let $T_1 \defeq E_1 \cup E_3 \cup E_5 \cup E_7$ and $T_2 \defeq E_2 \cup E_4 \cup E_6 \cup E_8$. Then $T$ is a transformer of $(S, \phi(S))$, where $T^{(2)}\setminus S$ and $T^{(2)} \setminus \phi(S)$ have a $K_3$-decomposition $T_1$ and $T_2$, respectively.
\end{proof}

We are now ready to prove \Cref{lem:sparse-K3-absorber}.

\begin{proof}[Proof of \Cref{lem:sparse-K3-absorber}]
    Let $G$ be a $K_3$-divisible graph. Let $|E(G)| = 3m$ for some integer $m$. Let $F_G$ be a $3$-graph obtained from $G$ by introducing a new vertex $v_e$ for every edge $e\in E(G)$ and adding a hyperedge $e\cup \{v_e\}$ for each $e\in E(G)$. We denote by $G^*$ the graph on vertex set $V(F_G)$ with $E(G^*) = E(F_G^{(2)})\setminus E(G)$. Note that $G^*$ is the $1$-subdivision of $G$ and $G^{**}$ is the $3$-subdivision of $G$.
    Then we observe that $F = F_G \cup F_{G^*}$ be a $(G, G^{**})$-transformer which has $G$-rooted edge-degeneracy $1$. Note that $F^{(2)}_G$ and $F^{(2)}_{G^*}$ are $K_3$-decompositions of $F^{(2)}\setminus G^{**}$ and $F^{(2)} \setminus G$, respectively.   

    We denote by $S_{3m}$ the union of $3m$ copies of $C_4$ that share exactly one common vertex and all the other vertices are disjoint. We note that $G^{**}\rightsquigarrow S_{|E(G)|}$ by identifying the vertices in $V(G)$ as a single vertex. By \Cref{lem:transformer-exist}, there is a $3$-graph $T$ which is a $(G^{**}, S_{3m})$-transformer such that the $(G^{**}\cup S_{|E(G)|})$-rooted edge-degeneracy is at most $4$. This implies that there is a transformer of $(G, S_{|3m|})$ which has  $(G, S_{|3m|})$-edge-degeneracy at most $4$. 

    Let $M$ be the disjoint union of $3$-uniform edges where $V(M)$ is disjoint from $V(G)$ and let $M'$ be the graph $M^{(2)}$. Let consider $S_{3m}$ which is vertex disjoint from $V(G)$ and $V(M')$. Then there are two $3$-graph $T_1$ and $T_2$ such that $T_1$ and $T_2$ are $(G, S_{3m})$-transformer and $(S_{3m}, M')$-transformer, respectively, where $T_1$ has $(G\cup S_{3m})$-rooted edge-degeneracy is at most $4$ and $T_2$ has $(S_{3m} \cup M')$-rooted edge-degeneracy is at most $4$. 
    We obtain $A$ by placing $M$ and $S_{3m}$ first and then construct $T_1$ and $T_2$. This shows that the $G$-rooted edge-degeneracy of $A$ is at most $4$. Thus, it suffice to show that both $A^{(2)}$ and $A^{(2)}\setminus G$ has a $K_3$-decomposition in $A$. By the definition of transformer, $T_1^{(2)} \setminus S_{3m}$ and $T_2^{(2)} \setminus M'$ has a $K_3$-decomposition in $T_1$ and $T_2$, respectively. Since $M$ itself forms a $K_3$-decomposition of $M'$, this implies that $A^{(2)}$ have $K_3$-decompositions in $A$. Similarly, $T_1^{2} \setminus G$ and $T_2^{(2)}\setminus S_{3m}$ have $K_3$-decompositions in $T_1$ and $T_2$, respectively. This shows that $A^{(2)}\setminus G$ has a $K_3$-decomposition in $A$. Hence, $A$ is a $K_3$-absorber for $G$.
\end{proof}


\section{Transversal vortices and cover-down lemma}\label{sec:cover down}

\subsection{Transversal vortex}\label{subsec:transversal-vortex}

Since we will apply the iterative absorption method for finding a transversal, we define the color-vortex and the transversal-vortex as follows.

\begin{definition}[$(n, \ve)$-color-vortex]\label{def:color-vortex}
    A sequence $C_0 \supseteq C_1 \supseteq \cdots \supseteq C_{\ell}$ is an \emph{$(n, \ve)$-color-vortex} if $C_i \subseteq [n]$ and $|C_i| \leq \ve^{i-100}n$ for each $0 \leq i \leq \ell$.    
\end{definition}

For given two nonnegative integers $n$ and $m$, if we initialize $n_0 = n$ and inductively define $n_{i+1} \defeq  \lfloor \ve n_i \rfloor$, then we denote $\ell(n; m)$ by the minimum value of $\ell$ that satisfying $n_{\ell} \leq m$.

\begin{definition}[$(\alpha, \ve, m)$-transversal-vortex]\label{def:transversal-vortex}
    Let $\alpha, \ve$ be the positive real numbers and $m, n$ be integers such that $n \equiv 1 \text{ or } 3 \pmod{6}$. We write $\ell$ be the number $\ell(n; m)$ and let $\calC = \{C_0\supseteq C_1 \supseteq \cdots \supseteq C_{\ell-1}\}$ be a $(\frac{n(n-1)}{6}, \ve)$-color-vortex.
    Let $\calH = \{H_1, \dots, H_{\frac{n(n-1)}{6}}\}$ be a collection of $3$-graphs on common vertex set $V$ with size $n$. An \emph{$(\alpha, \ve, m)$-trasnversal-vortex} in $\calH$ with respect to $\calC$ is a sequence $U_0 \supseteq U_1 \cdots \supseteq U_{\ell}$ which satisfies the following.

    \begin{enumerate}
        \item[$(V1)$] $U_0 = V(H)$,
        \item[$(V2)$] $|U_{i}| =  \ve |U_{i-1}|$ for all $1 \leq i \leq \ell$,
        \item[$(V3)$] $|U_{\ell}| = m$,
        \item[$(V4)$] $d_{H_j}(e; U_{i+1}) \geq \alpha |U_{i+1}|$ for all $0 \leq i < \ell$, $j\in C_i$, and $e\in \binom{U_{i}}{2}$.
    \end{enumerate}
\end{definition}

The following lemma yields a desired transversal-vortex for a given collection of $3$-graphs.

\begin{lemma}\label{lem:vortex-transversalSTS}
    Let $0 < \frac{1}{n} \ll \frac{1}{m'} \ll \alpha, \ve < 1$. Here, $n$ and $m$ are integers and $n \equiv 1 \text{ or } 3 \pmod{6}$. Let $\calH = \{H_1, \dots, H_{\frac{n(n-1)}{6}}\}$ be a collection of $3$-graphs on common vertex set $V$ with size $n$. 
    Let $\alpha > 0$ be a real number. Assume for all $i\in [\frac{n(n-1)}{6}]$, we have $\delta_2(H_i) \geq \left(\alpha + \ve \right)n$. Let $\ell = \ell(n; m')$ and let $\calC = \{C_0 \supseteq C_1 \supseteq \cdots \supseteq C_{\ell-1}\}$ be a $(\frac{n(n-1)}{6}, \ve)$-color-vortex. 
    Then $\calH$ has a $\left(\alpha, \ve, m \right)$-transversal-vortex with respect to $\calC$ for some $\ve m' < m \leq m'$.
\end{lemma}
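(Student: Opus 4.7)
The plan is to construct the transversal vortex by iterative random subset selection. Set $U_0 \defeq V$ and $n_{i+1} \defeq \lfloor \ve n_i \rfloor$, and let $\ell \defeq \ell(n; m')$; for $i = 0, 1, \ldots, \ell-1$ in turn, sample $U_{i+1}$ uniformly at random from the subsets of $U_i$ of size $n_{i+1}$. I will show that with positive probability this process satisfies condition (4) of \Cref{def:transversal-vortex}, after which any successful outcome can be fixed deterministically, and we output $m \defeq n_\ell \in (\ve m', m']$.

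The iteration is driven by the inductive hypothesis $P(i)$: for every $j \in C_i$ and $e \in \binom{U_i}{2}$,
\[
    d_{H_j}(e; U_i) \geq (\alpha + \ve_i)\,|U_i|,
\]
where $(\ve_i)_{i=0}^{\ell}$ is a slowly decreasing buffer with $\ve_0 = \ve$ and $\ve_\ell > 0$; a convenient choice is the harmonic schedule $\ve_i \defeq \frac{\ve}{2} + \frac{\ve}{2(i+1)}$, which keeps $\ve_\ell > \ve/2$ while spreading the total drop of $\ve/2$ across all $\ell$ steps. The base case $P(0)$ is the hypothesis $\delta_2(H_j) \geq (\alpha + \ve)n$. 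For the inductive step, conditional on $U_i$ satisfying $P(i)$, the random variable $d_{H_j}(e; U_{i+1})$ is hypergeometric with mean $\mu \geq (\alpha + \ve_i)\,n_{i+1}$, so \Cref{lem:chernoff-hypergeometric} applied with deviation parameter $\delta = (\ve_i - \ve_{i+1})/(\alpha + \ve_i)$ yields
\[
    \Pr\!\left[d_{H_j}(e; U_{i+1}) < (\alpha + \ve_{i+1})\,n_{i+1}\right] \leq 2\exp\!\left(-c\,(\ve_i - \ve_{i+1})^2\,n_{i+1}\right)
\]
for some constant $c = c(\alpha, \ve) > 0$. Taking a union bound over all pairs $(j, e)$ with $j \in C_i$ and $e \in \binom{U_i}{2}$ and over $i = 0, \ldots, \ell-1$, using $|C_i| \leq \ve^{i-100}\,n(n-1)/6$ and $n_i = \Theta(\ve^i n)$, shows that the total failure probability is $o(1)$ under the stated hierarchy. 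Establishing the concentration for this larger pair set at each step simultaneously gives both condition (4) at step $i$ and the restricted claim $P(i+1)$, since $C_{i+1} \subseteq C_i$ and $\binom{U_{i+1}}{2} \subseteq \binom{U_i}{2}$.

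The main obstacle is tuning the buffer schedule. The per-step drop $\ve_i - \ve_{i+1}$ must be large enough that $(\ve_i - \ve_{i+1})^2\,n_{i+1}$ dominates $\log(\mathrm{poly}(n))$ so that the Chernoff-union bound beats the polynomially many events, yet the accumulated drop $\ve - \ve_\ell$ must stay strictly below $\ve$, for otherwise $P(\ell)$ could fail to imply $d_{H_j}(e; U_\ell) \geq \alpha|U_\ell|$. With $\ell = \Theta(\log(n/m'))$, the harmonic schedule gives per-step drop $\ve/(2(i+1)(i+2)) \geq \ve/(2\ell^2)$, and since $n_{i+1} \geq \ve m'$ for all $i < \ell$, the exponent in the Chernoff bound dominates $\log n$ as soon as $m'$ is sufficiently large compared to $\log n$---a condition absorbed into the hierarchy $1/n \ll 1/m'$. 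Selecting any realization of the random process satisfying all these concentration events then yields the required $(\alpha, \ve, m)$-transversal-vortex.
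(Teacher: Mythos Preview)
Your overall plan---iterated uniform random subsets with a shrinking codegree buffer, controlled via the hypergeometric Chernoff bound and a union bound---is exactly the standard argument the paper has in mind. The gap lies in your choice of buffer schedule and in how you justify it from the parameter hierarchy.

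With the harmonic schedule $\ve_i = \tfrac{\ve}{2} + \tfrac{\ve}{2(i+1)}$ the per-step drop is $\ve_i - \ve_{i+1} = \Theta(\ve/i^2) \ge \Theta(\ve/\ell^2)$, giving a Chernoff exponent at step $i$ of order $n_{i+1}/\ell^4$. At the final step this is $\Theta(m/\ell^4)$. But $\ell = \ell(n;m') = \Theta_\ve(\log(n/m'))$, and under the hierarchy $1/n \ll 1/m'$ the integer $m'$ is fixed first and then $n$ may be taken arbitrarily large; hence $\ell\to\infty$ and your exponent tends to $0$, rendering the tail bound vacuous at the last few steps. Your sentence ``$m'$ is sufficiently large compared to $\log n$---a condition absorbed into the hierarchy $1/n \ll 1/m'$'' reads the hierarchy in the wrong direction: $1/n \ll 1/m'$ places a \emph{lower} bound on $n$ in terms of $m'$ and gives no control whatsoever of $\log n$ by $m'$.

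The fix is to tie the drop to the current scale rather than to the number of levels: take the drop at step $i$ to be $n_{i+1}^{-1/3}$, so that the Chernoff exponent becomes $\Theta(n_{i+1}^{1/3})$ and in particular is at least $\Theta(m^{1/3})$, a large constant independent of $n$. The accumulated drop is then $\sum_{j\le\ell} n_j^{-1/3}$, a geometrically increasing sum dominated by its final term and hence bounded by $O_\ve(m^{-1/3}) \le \ve$, using $1/m' \ll \ve$. With this schedule your inductive step goes through as written.
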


The proof of \Cref{lem:vortex-transversalSTS} is standard (we randomly take $U_{i}$ in $U_{i-1}$ for each $i\in [\ell]$, see~{\cite[Lemma 3.7]{barber2020minimalist}}), so we omit it.

One of the main ideas to prove \Cref{thm:main} is to keep tracking the remaining uncovered colors in the previous cover-down step. \Cref{lem:pseudorandom-STS} means that at each step we can obtain a subgraph in the remaining system which is highly pseudorandom. Then at each step, we can exponentially use the unused colors in the previous cover-down step which were expected to be used in the previous step. By this argument, after several steps, the number of unused color set in the certain stage is small enough to use them greedily while losing not so much pseudorandomness of the host structure. This idea will be formally stated and used in the following section.

Since, we need to track assigned colors in the few steps before, in \Cref{sec:proof}, we set the size of each color set in the color-vortex a bit bigger than the desired number of each color. However, to obtain a transversal-vortex, we cannot assign too many colors in each color set in the color-vortex. That is the reason that there is a size condition on the color set in the definition of the color-vortex.

\subsection{Cover-down lemma}\label{subsec:cover-down}

The main tool for the proof of \Cref{thm:main} is the following cover-down lemma for transversal $STS$. 

\begin{lemma}[Cover-down lemma]\label{lem:cover-down}
    Let $0 < \frac{1}{n} \ll \ve \ll 1$.
    Let $V$ be a set of $n$ vertices and let $U$ is a subset of $V$ with size $\ve n$. Let $U'\subseteq U$ be the set of size at most $\ve |U|$. Let $\calH = \{H_1, H_2, \dots, H_N\}$ is a collection of $3$-graph on the vertex set $V$ such that $\delta_{\mathrm{ess}}(H_i) \geq (\max\{\frac{3}{4}, \delta_f(3\ve)\}+10\ve)n$ for every $i\in [N]$, where $N$ is an integer such that $\ve^4 n^2 \leq N - \frac{|E(G\setminus G[U])|}{3} \leq \frac{\ve^3 n^2}{2}$. 
    Moreover, $H_i^{(2)}$ is the same graph $G$ for every $i\in [N]$.
    Suppose that $G$ is a $K_3$-divisible graph on $V$ with $\delta(G) \geq (1 - \ve)n$, $d_G(v; U) \geq (1 - 2\ve)|U|$ for all $v\in V$, and $G[U]$ is a complete graph.
    We further assume that for every $e\in E(G)$, the inequality $d_{H_i}(e;U) \geq (\max\{\frac{3}{4}, \delta_f(3\ve)\}+9\ve)|U|$ holds for all $i\in [N]$.
    Let $X_1, X_2, \dots, X_9$ be a partition of $[N]$ such that $|X_i| \leq \ve n^{i/4}$ for each $i\in [8]$. Then there exist $W\subseteq [N]$ and a $3$-graph $T$ on $V$ with $|E(T)| = |W|$ and $\delta_2(T) \leq 1$ that satisfies the following.

    \begin{enumerate}
        \item[$(D1)$] $X_1 \subseteq W$, $|X_i \setminus W| \leq \ve (\ve n)^{(i-1)/4}$ for each $i\in [9]\setminus \{1\}$,
        \item[$(D2)$] $(G \setminus G[U]) \subseteq T^{(2)} \subseteq (G \setminus G[U'])$,
        \item[$(D3)$] $\delta((G\setminus T^{(2)})[U]) \geq (1 - \ve)|U|$,
        \item[$(D4)$] $d_{(G\setminus T^{(2)})[U]}(u; U') \geq (1 - 2\ve)|U'|$ for all $u\in U$,
        \item[$(D5)$] there exists a bijective function $\phi: E(T) \to W$ such that $e\in E(H_{\phi(e)})$ for all $e\in E(T)$.
    \end{enumerate}
\end{lemma}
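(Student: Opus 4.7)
The plan is to reduce the cover-down problem to finding an almost perfect matching in a suitable auxiliary hypergraph, and then to exploit the nine-level stratification $X_1,\dots,X_9$ via an iterative matching argument. Construct an auxiliary $4$-uniform hypergraph $\calA$ on vertex set $E(G\setminus G[U'])\cup [N]$, whose edges are the $4$-sets $\{e_1,e_2,e_3,c\}$ such that $e_1,e_2,e_3\in E(G\setminus G[U'])$ form a triangle $f\subseteq V\setminus U'$ and $f\in E(H_c)$. A linear $3$-graph $T$ together with a color-assignment $\phi\colon E(T)\to W$ satisfying condition~(5) corresponds exactly to a matching $M$ in $\calA$: the projection of $M$ onto the color coordinate gives $W$, and the projection onto the three pair-coordinates gives $E(T)$. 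In these terms, condition~(2) requires that $M$ saturate every vertex of $E(G\setminus G[U])$, condition~(1) requires that $M$ saturate all of $X_1$ while missing at most $\ve(\ve n)^{(i-1)/4}$ vertices of $X_i$ for $i\ge 2$, and conditions~(3)--(4) will follow if $M$ is sufficiently pseudorandom at the pair-vertices indexing edges inside $U$.

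The next step is to build a pseudorandom fractional matching on $\calA$. Because $\delta_{ess}(H_i)\ge(\delta_f(3\ve)+10\ve)n$ and the codegree survives the deletion of $U'$ with room to spare, \Cref{lem:pseudorandom-STS} yields a $(1,0,\log^2 n/n)$-pseudorandom fractional $STS$ of each $H_i$. Stacking these over all colors and invoking \Cref{obs:correspondence} produces a pseudorandom fractional matching on $\calA$ whose weighted degree is essentially uniform at every pair-vertex and every color-vertex. Discretizing this fractional matching -- rounding weights to a common scale $1/\mathrm{poly}(n)$ and replicating edges -- yields a $(D,\tau,\delta)$-pseudorandom multi-sub-hypergraph $\calA'\subseteq \calA$ with $\tau$ and $\delta$ both of order $n^{-\Omega(1)}$. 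This regularity is strong enough to feed into the hypergraph matching theorem of~\cite{kang2020new}.

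Now run a nine-stage iteration. At stage $i$, work in a restriction $\calA_i$ of $\calA'$ built from the currently uncovered designated pair-vertices and the currently unused colors in $X_1\cup\cdots\cup X_i$. Apply the strong matching theorem to $\calA_i$ to obtain a near-perfect matching whose unsaturated set has size at most $n^{-1/4}$ times the size of the previous residue. Between stages, run a greedy clean-up step that absorbs the residue using colors from the current reserve $X_i$; the codegree condition $d_{H_i}(e;U)\ge(\max\{3/4,\delta_f(3\ve)\}+9\ve)|U|$ combined with the near-uniformity of $\calA'$ ensures that every residual pair and every residual color in $X_1$ still has polynomially many admissible hyperedges of $\calA'$ available. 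After the ninth stage the counts $|X_i\setminus W|$ satisfy the required geometric decay and every pair of $E(G\setminus G[U])$ is covered. Conditions~(3) and~(4) then follow because the pseudorandomness of $M$ ensures that only an $O(\log^2 n/n)$ fraction of edges at any fixed vertex of $U$ (resp.\ $U'$) ends up inside $T^{(2)}$.

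The main obstacle is the simultaneous handling of two distinct kinds of must-cover vertices -- the pairs in $E(G\setminus G[U])$ and the colors in $X_1$ -- while preserving enough pseudorandomness of the remaining hypergraph to feed the next stage. A single pass through the matching theorem leaves a residue far too large to absorb greedily in one shot, which is precisely why the nine-level stratification with geometric decay is introduced: each of the nine stages knocks the residue down by a polynomial factor, using $X_i$ as its reserve color class. A secondary difficulty is preserving the essential completeness of $G[U]$ and the density toward $U'$ after removing $T^{(2)}$; this is controlled by insisting on pseudorandomness of the matching at those pair-vertices of $\calA'$ that correspond to edges inside $U$, which is in turn guaranteed by the restricted-codegree hypothesis on each $H_i$ inside $U$.
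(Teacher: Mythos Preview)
Your plan has a genuine gap in how conditions (3)--(4) are obtained, and it misreads the role of the partition $X_1,\dots,X_9$.

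\textbf{Conditions (3)--(4).} Your auxiliary hypergraph $\calA$ takes as pair-vertices all of $E(G\setminus G[U'])$, which includes the edges of $G[U]\setminus G[U']$. A near-perfect (or $(\calF,\eta)$-pseudorandom) matching in $\calA$ saturates a $(1-\eta)$-fraction of every set in $\calF$, so almost every edge inside $U$ would land in $T^{(2)}$; this forces $\delta((G\setminus T^{(2)})[U])=o(|U|)$, the opposite of what (3) asks. Your sentence ``pseudorandomness of $M$ ensures that only an $O(\log^2 n/n)$ fraction of edges at any fixed vertex of $U$ ends up inside $T^{(2)}$'' has the inequality reversed. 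The paper avoids this by building the $4$-uniform auxiliary hypergraph only over $G'=G\setminus(G[U]\cup R)$, where $R$ is a sparse reserved graph between $V\setminus U$ and $U$; hence the main matching $T_1$ never touches $G[U]$, and the few edges of $G[U]$ that are eventually used come from a separate step contributing $O(\ve^9|U|)$ per vertex.

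\textbf{Exact coverage of $E(G\setminus G[U])$.} Even granting your iteration, some pair-vertices of $E(G\setminus G[U])$ remain unsaturated, and a ``greedy clean-up'' cannot just absorb them: every greedy triangle through a residual edge consumes two further pair-vertices that may already be matched, creating codegree-$2$ conflicts in $T$. The paper's mechanism is to \emph{reserve in advance} random sets $A_i\subseteq U\setminus U'$ for each $v_i\in V\setminus U$ (Claim~\ref{clm:local-cleaner}) together with a robust hypergraph $L$ built from a reserve colour pool $Y\subseteq X_9$. After one matching step leaves every vertex with at most $|U|^{3/4}$ uncovered incident edges, residual edges inside $V\setminus U$ are routed into $A_i\cap A_j$ via $L$ (the piece $T_2$), and the remaining $(V\setminus U)$--$U$ edges are finished by edge-disjoint perfect matchings in the link graphs via \Cref{lem:absorption-matching} (the piece $T_3$). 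This reserved absorber structure is entirely absent from your outline.

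\textbf{The nine classes $X_1,\dots,X_9$.} These are not stages of an inner iteration. The paper handles them with a \emph{single} application of \Cref{lem:pseudorandom-alon-yuster-type}: a tiny greedy matching $M_a$ first saturates all of $X_1$ (size $\le \ve n^{1/4}$, absorbed by the codegree bound), and then the padded sets $X'_2,\dots,X'_8$ together with the neighbourhoods $E_{v_i}$ are placed into the family $\calF$, so one pseudorandom matching $M_b$ yields $|X_i\setminus W|\le \delta^{1/3}|X'_i|\le \ve(\ve n)^{(i-1)/4}$ simultaneously. The nine-level decay is consumed by the \emph{outer} iteration in \Cref{sec:proof}, where successive calls to the cover-down lemma shift $X_j\mapsto X_{j-1}$.
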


\Cref{lem:cover-down} means that we can iteratively construct the desired structure and it allows us to focus on the smaller set $U'$ after applying \Cref{lem:cover-down}. Moreover, in some sense, we can absorb a small set of colors to complete the transversal structure.

The following lemma shows that we can build a pseudorandom hypergraph based on given pseudorandom fractional matching.

\begin{lemma}\label{lem:matching-to-hypergraph}
    Let $k$ and $n$ be an integer and $D, d, \tau,\delta$ be non-negative real numbers. Let $H$ be a $k$-uniform simple hypergraph that has a $(d, \tau, \delta)$-pseudorandom fractional matching and $\Delta(H) \leq \tau D$. Then there is a multi-hypergraph $F$ such that $F_{\mathrm{simp}}$ is a sub-hypergraph of $H$ and $F$ is an $(D, 2\tau, \delta)$-pseudorandom hypergraph.
\end{lemma}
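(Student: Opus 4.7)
\begin{proofsketch}[Proof proposal]
The plan is to obtain $F$ by a straightforward deterministic discretization of the given weighted subgraph. If $\psi \colon E(H) \to [0,1]$ is the $(d,\tau,\delta)$-pseudorandom weighted subgraph provided by the hypothesis, so that $d^{\psi}(v) = (1\pm\tau)d$ for every $v\in V(H)$ and $\Delta_2^{\psi}(H) \leq \delta d$, I would define the multiplicity of each edge $e \in E(H)$ in $F$ by
\[
    m(e) \defeq \left\lfloor \frac{D\,\psi(e)}{d} \right\rfloor,
\]
and let $F$ be the resulting multi-hypergraph on $V(H)$. That $F_{simp} \subseteq H$ is immediate, because $m(e)>0$ forces $\psi(e)>0$, hence $e \in E(H)$.

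Next I would verify the three pseudorandomness conditions of $F$. The upper degree bound and the codegree bound are painless: dropping the floor gives
\[
    d_F(v) \;\leq\; \frac{D}{d}\, d^{\psi}(v) \;\leq\; (1+\tau)D, \qquad d_{2,F}(\{u,v\}) \;\leq\; \frac{D}{d}\, d_2^{\psi}(\{u,v\}) \;\leq\; \delta D,
\]
which handles the $\Delta(F)\leq (1+2\tau)D$ and $\Delta_2(F)\leq \delta D$ requirements on the nose.

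The only step that genuinely uses the hypothesis $\Delta(H) \leq \tau D$, and thus the main (mild) obstacle, is the lower degree bound, where the rounding loss $\lfloor x \rfloor \geq x - 1$ accumulates once per edge through $v$. Combining this with the degree bound on $H$ gives
\[
    d_F(v) \;\geq\; \frac{D}{d}\, d^{\psi}(v) \;-\; d_H(v) \;\geq\; (1-\tau)D \;-\; \tau D \;=\; (1-2\tau)D,
\]
which is precisely the $2\tau$ slack promised in the conclusion. Putting the three bounds together shows that $F$ is $(D, 2\tau, \delta)$-pseudorandom, completing the proof.
\end{proofsketch}
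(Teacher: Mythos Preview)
Your proposal is correct and essentially identical to the paper's proof: both define the multiplicity as $\lfloor \tfrac{D}{d}\psi(e)\rfloor$, bound the degree via $d_F(v)=\tfrac{D}{d}d^{\psi}(v)\pm d_H(v)=(1\pm 2\tau)D$ using $\Delta(H)\leq\tau D$ for the rounding loss, and bound the codegree by dropping the floor. The paper simply compresses the upper and lower degree bounds into a single $\pm$ line, whereas you separate them, but the argument is the same.
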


\begin{proof}
    Let $\psi: E(H) \to [0, 1]$ be the given $(d, \tau, \delta)$-pseudorandom fractional matching. We replace each edge $e\in E(H)$ by multiple edges of multiplicity $\lfloor \frac{D}{d}\psi(e) \rfloor$ and denote the resulting multi-hypergraph by $F$. 
    Then for every $v\in V(H)$, we have $d_F(v) = \sum_{v\in e\in E(H)} \lfloor \frac{D}{d}\psi(e) \rfloor = \frac{D}{d} d^{\psi}(v) \pm \Delta (H) = (1 \pm 2\tau)D$.
    Similarly, for every distinct two vertices $u, v\in V(H)$, we have $d_2(\{u, v\}) = \sum_{\{u, v\}\subseteq e \in E(H)} \lfloor \frac{D}{d}\psi(e) \rfloor \leq \delta D$. Thus, $F$ is a $(D, 2\tau, \delta)$-pseudorandom hypergraph. Since we replace edges in $E(H)$ with multiple edges, obviously $F_{\mathrm{simp}}$ is a sub-hypergraph of $H$.
\end{proof}

To prove \Cref{lem:cover-down}, we need the following two lemmas,

\begin{lemma}[{\cite[Theorem 7.1]{kang2020new}}]\label{lem:pseudorandom-alon-yuster-type}
    Let $k > 3$ be an integer, $D, \tau, \delta, \mu, \ve, \gamma, K > 0$ and $\ve \in (0, 1 - \frac{1}{k-1})$ be real numbers. Then there exists $n_0 = n_0(k, K, \gamma, \mu, \ve)$ such that the following holds for every $n \geq n_0$.

    Let $H$ be a $k$-uniform $n$-vertex $(D, \tau, \delta)$-pseudorandom multi-hypergraph. Assume $D \geq exp(\log^{\mu} n)$, $\delta \leq D^{-\gamma}$, and $\tau \leq K\delta^{1 - \ve}$. Let $\calF \subseteq 2^{V(H)}$ be a collection of vertex subsets of $V(H)$ such that $|F| \geq \delta^{-\frac{1}{2}}\log n$ for all $F\in \calF$ and $|\calF| \leq exp(\log^{\frac{4}{3}} n)$. Then $\calH$ has a $(\calF, \delta^{\frac{1}{k-1}})$-pseudorandom matching.
\end{lemma}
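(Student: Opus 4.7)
The plan is to prove the statement via the semi-random (Rödl nibble) method, following the general framework developed by Alon--Yuster, Kahn, and refined in more recent works on pseudorandom matchings such as Ehard--Glock--Joos and Kang--Kelly--Kühn--Methuku--Osthus. The essential idea is to build the matching $M$ in stages: at each stage we select a random sub-matching that uses only a $(1-e^{-1})$-fraction of the expected number of edges, and track, via martingale concentration, how the degrees, codegrees, and the sets $|F \cap V(M)|$ for $F\in\calF$ evolve.

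First, I would set up the \emph{nibble step}. Given the current (weighted) sub-hypergraph $H_t$ with maximum degree $D_t$, I would include each edge independently with probability $p_t = \alpha/D_t$ for a small constant $\alpha > 0$ to form a random set $E_t'$, then delete from $E_t'$ every edge that shares a vertex with another chosen edge, yielding a partial matching $M_t$. Because $\Delta_2(H_t) \leq \delta_t D_t$ with $\delta_t$ small, a standard first-moment computation shows conflicts are rare, so $|M_t|$ is on the order of $\alpha n/k$. After the nibble, the updated hypergraph $H_{t+1}$ is obtained by deleting every vertex covered by $M_t$ (and incident edges). The expected new degree of an uncovered vertex $v$ is $(1 - \alpha)^{k-1} D_t \cdot (1 \pm o(1))$, and similarly the expected new size of $F \in \calF$ is $(1-\alpha)^{k-1} |F \cap V(H_t)| \cdot (1\pm o(1))$.

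Second, I would establish concentration via Freedman's inequality (or Azuma/Talagrand on an edge-exposure martingale). The delicate point is that the nibble step only has independence before the conflict-resolution, so I would use the standard trick of defining a vertex- or edge-exposure martingale and bounding the one-step difference by $O(1)$ times a function of the codegree parameter $\delta_t$. The parameter evolution satisfies $D_{t+1} \approx (1-\alpha)^{k-1} D_t$, $\delta_{t+1} \leq \delta_t^{1+c}$ for some $c > 0$ (the codegrees decay faster than degrees), and $\tau_{t+1} \leq K \delta_{t+1}^{1-\ve}$ is preserved inductively provided the error terms we pick up stay below $\delta_t^{1/2}$ in relative size. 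For the sets in $\calF$, since $|F \cap V(H_t)| \geq \delta^{-1/2} \log n \gg D_t^{-1/2}\log n$, a Freedman-type bound gives a failure probability of the form $\exp(-\Theta(\log^{3/2} n))$, which comfortably beats the union bound over $|\calF| \leq \exp(\log^{4/3}n)$.

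Third, I would iterate the nibble $T = O(\log D)$ times, so that $D_T$ drops to a polylogarithmic level while the codegree and the pseudorandomness conditions are maintained throughout. At the end of this process, the union $M = \bigcup_t M_t$ is a matching that leaves each $F\in\calF$ with at most $\delta^{1/(k-1)} |F|$ uncovered vertices, as required. Any leftover edges can be incorporated by a trivial greedy step since the residual hypergraph is very sparse. The main obstacle is the simultaneous control, with a single union bound, of all of the following quantities: $D_t$, $\Delta_2(H_t)$, and $|F \cap V(H_t)|$ for every $F \in \calF$; handling this requires choosing the exposure martingale carefully so that the Lipschitz constant is small enough that the concentration inequality defeats both $\exp(\log^{4/3}n)$ and the size-$\delta^{-1/2}\log n$ threshold simultaneously. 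Since this is exactly Theorem~7.1 of~\cite{kang2020new}, I would in fact invoke their argument directly rather than reproduce the full bookkeeping.
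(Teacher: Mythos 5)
The paper offers no proof of this lemma at all: it is quoted (in simplified form) directly as Theorem 7.1 of \cite{kang2020new}, so your concluding move of simply invoking that theorem is exactly what the paper does, and your nibble-plus-martingale sketch is a reasonable description of how results of this type are actually proved. One caveat on the sketch itself: the claim that ``codegrees decay faster than degrees'' so that $\delta_{t+1}\leq\delta_t^{1+c}$ is backwards --- in a nibble round the degree of a surviving vertex shrinks by roughly a factor $e^{-(k-1)\alpha}$ while the codegree of a surviving pair shrinks only by roughly $e^{-(k-2)\alpha}$, so the \emph{relative} codegree bound typically degrades slightly at each step, and controlling this accumulation is one of the delicate points of the full argument in \cite{kang2020new}; since you ultimately defer to that theorem rather than carry out the bookkeeping, this does not create a gap here.
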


We note that \Cref{lem:pseudorandom-alon-yuster-type} is a simplified version of Theorem 7.1 in~\cite{kang2020new}, but this simpler version is sufficient for our purpose.
The following is the rainbow analogue of Lemma 3.10 in~\cite{barber2020minimalist}.

\begin{lemma}\label{lem:absorption-matching}
    Let $\mu$ be a positive real number and let $N$ be the positive integer. Then there exists $n_0$ which depends only on $\mu$ such that the following holds for all $n \geq n_0$. 

    Let $G_1, \dots, G_N$ be graphs on common vertex set $V$ of size $n$ and assume $A_1, \dots, A_N$ be the subset of $V$ that satisfies the followings:
    \begin{enumerate}
        \item[$\bullet$] $|A_i|$ is even and $\delta(G_i[A_i]) \geq (1/2 + 4\mu^{1/6})|A_i|$ for all $i \in [N]$,
        \item[$\bullet$] $|A_i| \geq \mu^{4/3}n$ for all $i\in [N]$,
        \item[$\bullet$] $|A_i \cap A_j| \leq \mu^2 n$ for all $1 \leq i < j \leq N$,
        \item[$\bullet$] every $v\in V$ is contained in at most $\mu n$ of the sets $A_i$.  
    \end{enumerate}
    Then for every $i\in [N]$, the graph $G_i[A_i]$ contains a perfect matching $M_i$ such that $E(M_i) \cap E(M_j) = \emptyset$ for every $1 \leq i < j \leq N$.
\end{lemma}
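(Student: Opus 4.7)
I would construct $M_1, \ldots, M_N$ sequentially. At step $i$, set $G_i' \defeq G_i[A_i] \setminus \bigcup_{j<i} M_j$. Since $|A_i|$ is even, Dirac's theorem provides the required perfect matching $M_i$ in $G_i'$ as soon as $\delta(G_i') \geq |A_i|/2$. As $\delta(G_i[A_i]) \geq (1/2 + 4\mu^{1/6})|A_i|$, the problem reduces to showing that for each $v \in A_i$ the number of edges at $v$ lost to earlier matchings is at most $4\mu^{1/6}|A_i|$. An edge $vw$ at $v$ is lost at step $j < i$ only when $v, w \in A_j$ and $vw \in M_j$, so this loss equals the number of $j < i$ with $v \in A_j$ and whose $M_j$-partner $w_j$ of $v$ happens to lie in $A_i \cap A_j$.

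\textbf{Key probabilistic estimate.} The number of $j < i$ with $v \in A_j$ is at most $\mu n$ by hypothesis~(4). This naive bound is insufficient because $|A_i|$ could be as small as $\mu^{4/3}n$ and $\mu n$ then exceeds $4\mu^{1/6}|A_i|$. To exploit hypothesis~(3), $|A_i \cap A_j| \leq \mu^2 n$, I plan to choose each $M_j$ uniformly at random among perfect matchings of $G_j'$. A standard property of a uniformly random perfect matching in a graph of Dirac-type density is that the partner of a fixed vertex is approximately uniformly distributed across the remaining vertex set, which should give $\Pr[w_j \in A_i \mid v \in A_j] \leq O(|A_i \cap A_j|/|A_j|) \leq O(\mu^2 n / \mu^{4/3}n) = O(\mu^{2/3})$. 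Summing over the at most $\mu n$ relevant values of $j$, the expected loss at $v$ is $O(\mu^{5/3} n) = O(\mu^{1/3}|A_i|)$, which is far below the slack $4\mu^{1/6}|A_i|$ since $\mu^{1/3} \ll \mu^{1/6}$. Concentration via a Doob martingale revealing one $M_j$ at a time, combined with Azuma's inequality, should confine the loss to within a factor of two of its mean, and a union bound over $v$ and $i$ delivers positive probability that every $G_i'$ meets Dirac's condition.

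\textbf{Main obstacle.} The delicate step is justifying the approximate uniformity of the partner of $v$ in a uniformly random perfect matching of a Dirac-dense graph, which amounts to estimating the ratio of perfect matchings through a fixed edge to the total. One route is to invoke Br\`egman--Minc/van der Waerden-type bounds on permanents together with an edge-switching argument to control this ratio. A cleaner, more self-contained alternative is to replace the uniform random perfect matching by a \emph{random-greedy} perfect matching built edge by edge (the Dirac slack guarantees many continuation choices at every stage), in which case the partner of $v$ is approximately uniform by construction and the required estimate reduces to standard Chernoff-type bounds. Either implementation makes the sequential Dirac-plus-bookkeeping framework rigorous and yields the lemma.
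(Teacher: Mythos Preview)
The paper omits this proof entirely, deferring to Lemma~3.10 of \cite{barber2020minimalist}; your sequential Dirac-plus-random-matching scheme is exactly that argument. Two points where the sketch needs tightening. First, the partner-uniformity step requires no permanent estimates: a single edge-switch already shows that in any graph on $m$ vertices with $\delta\ge(\tfrac12+\alpha)m$, each fixed edge lies in at most a $1/(\alpha m)$ fraction of the perfect matchings; with $\alpha=\Theta(\mu^{1/6})$ this gives $\Pr[w_j(v)\in A_i]=O(\mu^{1/2})$ rather than $O(\mu^{2/3})$, still comfortably below the slack $4\mu^{1/6}|A_i|$. Second, Doob--Azuma does not work as you wrote it: because $M_j$ affects the graph $G_k'$ in which every later $M_k$ is drawn, revealing $M_j$ can shift the conditional expectation of each future indicator by as much as $p$, so the martingale differences are of order $\mu^{3/2}n$ rather than $1$ and Azuma becomes vacuous. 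The standard repair is stochastic domination: conditioned on any history in which the Dirac slack has survived, each indicator $\mathbf{1}[w_j(v)\in A_i]$ is dominated by an independent Bernoulli$(p)$, so the loss is dominated by $\mathrm{Bin}(\mu n,p)$ and Chernoff plus a union bound (inductively maintaining the slack) finishes.
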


Since the proof of \Cref{lem:absorption-matching} is almost identical with the proof of Lemma 3.10 in~\cite{barber2020minimalist}, we omit the proof.
We are now ready to prove \Cref{lem:cover-down}.

\begin{proof}[Proof of \Cref{lem:cover-down}]
    Let $\delta \defeq \max \{\frac{3}{4}, \delta_f(3\ve)\}$ and $n' = |V\setminus U|$. Let $V = \{v_1, \dots, v_n\}$ and $V\setminus U = \{v_1, \dots, v_{n'}\}$.
    Choose an arbitrary color set $Y\subseteq X_9$ with $\ve^4 n^2 \leq |Y| \leq 2\ve^4 n^2$. Then we consider an auxiliary hypergraph $L$ on the vertex set $V$ with edges $f\in \binom{V}{3}$ such that $|\{i\in Y: f\in E(H_i)\}| \geq \ve^{5} n^2$. We observe the following.

    \begin{observation}\label{obs:L-codegree}
        $L^{(2)} = G$ and for every $e\in E(G)$, we have $d_L(e; U) \geq (\delta + 8\ve)|U|$. 
    \end{observation}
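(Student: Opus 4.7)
The plan is to prove part (2) first via a direct double-counting argument, and then deduce part (1) as an immediate corollary. Fix an edge $e = \{u, v\}\in E(G)$ and set $T \defeq N_L(e; U)$, so that $d_L(e; U) = |T|$ and the target for (2) becomes $|T| \geq (\delta + 8\ve)|U|$.

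The key quantity to double-count is
\[
S \defeq |\{(i, w)\in Y\times U : \{u,v,w\}\in E(H_i)\}|.
\]
Summing first over $i\in Y$ and invoking the codegree hypothesis $d_{H_i}(e; U) \geq (\delta + 9\ve)|U|$ supplied in the statement of \Cref{lem:cover-down} gives $S \geq (\delta + 9\ve)|U|\cdot |Y|$. Summing first over $w\in U$, every $w\in T$ contributes at most $|Y|$ to $S$, while by the definition of $L$ each $w\in U\setminus T$ contributes strictly less than $\ve^5 n^2$. Using $|Y| \geq \ve^4 n^2$ we have $\ve^5 n^2 \leq \ve|Y|$, so
\[
(\delta + 9\ve)|U|\cdot |Y| \;\leq\; |T|\cdot |Y| + |U|\cdot \ve|Y|.
\]
Dividing through by $|Y|$ yields $|T| \geq (\delta + 8\ve)|U|$, which is part (2).

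For part (1), the containment $V(L) = V = V(G)$ is immediate, so it suffices to check that the edge sets of $L^{(2)}$ and $G$ agree. Every $f\in E(L)$ lies in some $H_i$ with $H_i^{(2)} = G$, so all three pairs in $f$ belong to $E(G)$; hence $E(L^{(2)})\subseteq E(G)$. Conversely, part (2) gives $d_L(e; U) \geq (\delta + 8\ve)|U| > 0$ for every $e\in E(G)$, producing a triple of $L$ containing $e$, so $E(G)\subseteq E(L^{(2)})$. I do not anticipate any real obstacle here: the observation is essentially an averaging statement, and the quantitative gap between the codegree assumption $(\delta + 9\ve)|U|$ and the target $(\delta + 8\ve)|U|$ is precisely calibrated to absorb the $\ve|U|$ loss coming from the vertices outside $T$ and from the threshold used to define $L$.
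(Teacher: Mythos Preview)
Your proof is correct and follows essentially the same double-counting argument as the paper: the paper compresses your computation into the single chain $(\delta + 9\ve)|U|\,|Y| \leq \sum_{i\in Y} d_{H_i}(e; U) \leq d_L(e; U)|Y| + \ve^5 n^2 |U| \leq d_L(e; U)|Y| + \ve |U|\,|Y|$, and then notes that this yields both $d_L(e;U)\geq(\delta+8\ve)|U|$ and $L^{(2)}=G$. Your version simply unpacks the same steps a bit more explicitly.
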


    \begin{claimproof}
        For all $e\in E(G)$, we have $$(\delta + 9\ve)|U| |Y| \leq \sum_{i\in Y} d_{H_i}(e; U) \leq d_L(e; U)|Y| + \ve^5 n^2|U|  \leq d_L(e; U)|Y| + \ve |U| |Y|.$$ This implies $d_L(e; U) \geq (\delta + 8\ve)|U|$ and $L^{(2)} = G$. This proves the observation.
    \end{claimproof}

    For a $3$-graph $H$ and two distinct vertices $u, v$ of $H$, we write $N_H(u v)$ as $N_H(\{u, v\})$.

    \begin{claim}\label{clm:local-cleaner}
        Let $\mu = \ve^{10}$.
        There exists subsets $A_1, \dots, A_{n'}$ in $U\setminus U'$ that satisfy the followings for all $i, j\in [n']$ with $i\neq j$. 
        \begin{enumerate}
            \item[$(a)$] $A_i \subseteq N_G(v_i)$,
            \item[$(b)$] $|A_i| \geq \frac{\mu |U|}{2}$,
            \item[$(c)$] $|A_i \cap A_j| \leq 2\mu^2 |U|$,
            \item[$(d)$] for all $u\in U$, we have $|N_L(u v_i)\cap A_i| \geq \frac{2|A_i|}{3}$,
            \item[$(e)$] for all $1 \leq i\neq j \leq n'$, we have $|N_L(v_i v_j) \cap A_i \cap A_j| \geq \frac{\mu^2 |U|}{8}$,
            \item[$(f)$] each $u\in U$ is contained in $A_i$ for at most $2\mu n$ numbers $i\in [n']$, and
            \item[$(g)$] the number $|E(G\setminus G[U])| - \sum_{i\in n'} |A_i|$ is divisible by $3$. 
        \end{enumerate}
    \end{claim}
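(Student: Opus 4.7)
The plan is to construct the sets $A_i$ by independent random sampling and then verify each of conditions (a)--(f) via Chernoff plus a union bound, handling (g) by a tiny deterministic adjustment at the end. Concretely, for each $i \in [n']$, I would include each vertex $w \in N_G(v_i) \cap (U \setminus U')$ in $A_i$ with probability $\mu$, all choices independent. Property (a) is then immediate. Using $d_G(v_i; U) \geq (1 - 2\ve)|U|$ and $|U'| \leq \ve|U|$, the set $S_i \defeq N_G(v_i) \cap (U \setminus U')$ has size at least $(1 - 3\ve)|U|$, so $\mathbb{E}[|A_i|] \geq \mu(1 - 3\ve)|U|$, and Chernoff (\Cref{lem:chernoff}) will give (b) together with an upper bound $|A_i| \leq (1 + o(1))\mu|U|$.

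For (d), I would first observe that $L^{(2)} = G$ forces $N_L(uv_i) = \emptyset$ when $uv_i \notin E(G)$, so the condition is only meaningful for $u$ with $uv_i \in E(G)$; for such $u$, every $w \in N_L(uv_i)$ lies in $N_G(v_i)$, whence \Cref{obs:L-codegree} and $|U'| \leq \ve|U|$ give $|N_L(uv_i) \cap S_i| \geq (\delta + 7\ve)|U|$. This yields $\mathbb{E}[|N_L(uv_i) \cap A_i|]/\mathbb{E}[|A_i|] \geq \delta + 7\ve$, which exceeds $2/3$ by a fixed margin since $\delta \geq 3/4$; applying Chernoff with, say, relative error $\mu^{1/4}$ to both numerator and denominator preserves this margin and establishes (d). The same reasoning handles (e): for $v_iv_j \in E(G)$, $N_L(v_iv_j) \cap (U \setminus U') \subseteq S_i \cap S_j$ has size at least $(\delta + 7\ve)|U|$, so $\mathbb{E}[|N_L(v_iv_j) \cap A_i \cap A_j|] \geq (3/4)\mu^2|U|$, comfortably above $\mu^2|U|/8$.

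For (c), I would bound $\mathbb{E}[|A_i \cap A_j|] \leq \mu^2|U|$ and appeal directly to Chernoff for the factor-of-$2$ slack; for (f), I would use $\mathbb{E}[|\{i : u \in A_i\}|] \leq \mu n'$. Each of the $O(n^2)$ failure events above has probability $\exp(-\Omega(\mu^2|U|)) = \exp(-\Omega(\ve^{21}n))$, so a union bound is easily absorbed and yields positive probability of a simultaneously successful realization.

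Finally, (g) is a minor fix: after sampling, I would simply delete $0$, $1$, or $2$ vertices from $A_1$ to adjust $\sum_i |A_i|$ modulo $3$, which leaves (b)--(f) intact since $|A_1| \gg 2$. The main obstacle is the ratio calculation for (d): it is exactly here that the hypothesis $\delta \geq 3/4$ is exploited, with the $3/4$ threshold providing a comfortable margin against Chernoff fluctuations; any argument with a weaker codegree assumption below $2/3$ would fail at this step.
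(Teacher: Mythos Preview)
Your proposal is correct and follows essentially the same approach as the paper: independent $\mu$-sampling from $N_G(v_i)\cap(U\setminus U')$, Chernoff plus union bound for (b)--(f), and deletion of at most two vertices from $A_1$ for (g). Your identification of (d) as the place where $\delta\ge 3/4$ is genuinely needed matches the paper exactly.
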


    \begin{claimproof}
        Let $X_i$ be a random subset of $N_G(v_i)\cap (U\setminus U')$ obtained by choosing each vertex in $N_G(v_i)\cap (U\setminus U')$ with probability $\mu$ independently at random for each $i\in [n']$. We note that $(1-\ve)|U| \leq |U\setminus U'|$ and $(1 - 3\ve)|U| \leq |N_G(v_i)\cap (U\setminus U')| \leq |U|$. Then by Chernoff bound~(\Cref{lem:chernoff}) and the union bound, with probability at least $\frac{9}{10}$, we have the following for all $i\in [n']$.
        \begin{equation}\label{eq:eq1}
            |A_i| \geq \frac{2\mu|U|}{3}.
        \end{equation}
        
        Since $|N_G(v_i) \cap N_G(v_j) \cap (U\setminus U')| \geq (1 - 5\ve)|U|$ for every $1\leq i \neq j \leq n'$, by \Cref{lem:chernoff} and the union bound, with probability at least $\frac{9}{10}$, the following inequality holds for all $1\leq i\neq j\leq n'$.
        \begin{equation}\label{eq:eq2}
            |A_i \cap A_j| \leq 2\mu^2 n.   
        \end{equation}

        By \Cref{obs:L-codegree}, for all $i\in [n']$ and $u\in U$, we have $|N_L(\{u, v_i\})\cap N_G(v_i) \cap (U\setminus U')| \geq (\delta + 5\ve)|U|$. As $\delta \geq \frac{3}{4}$, again, by \Cref{lem:chernoff} and the union bound, with probability at least $\frac{9}{10}$, we have the following for all $u\in U$ and $i\in [n']$.
        \begin{equation}\label{eq:eq3}
            |N_L(\{u, v_i\})\cap A_i| \geq \frac{2|A_i|}{3}.
        \end{equation}

        Since for every $1\leq i \neq j \leq n'$, we have $|N_L(\{v_i, v_j\})\cap N_G(v_i) \cap N_G(v_j) \cap  (U\setminus U')| \geq (\delta + 3\ve)|U|$, we also have the following inequality with probability at least $\frac{9}{10}$ for every $1 \leq i \neq j \leq n'$.
        \begin{equation}\label{eq:eq4}
            |N_L(\{v_i, v_j\})\cap A_i\cap A_j| \geq \frac{\mu^2 |U|}{6}.
        \end{equation}

        Finally, by \Cref{lem:chernoff} and the union bound, with probability at least $\frac{9}{10}$, for every $u\in U$, the following holds.
        \begin{equation}\label{eq:eq5}
            |\{i\in [n']: u\in A_i\}| \leq 2\mu n.
        \end{equation}

        Then by the union bound, with positive probability, there exist the sets $A_1, \dots, A_{n'}$ that satisfy all \eqref{eq:eq1}---\eqref{eq:eq5}. To ensure they satisfy $(g)$, we remove at most two elements from $A_1$. Then the sets $A_1, \dots, A_{n'}$ satisfies all the conditions $(a)$ to $(g)$. This proves the claim. 
    \end{claimproof}

    Let fix the sets $A_1, \dots, A_{n'}$ that satisfy \Cref{clm:local-cleaner}. Let $R$ be a subgraph of $G$ such that $E(R) = \{v_iu\in E(G):i\in [n'], u\in A_i\}$.

    Let $G' =G \setminus (G[U]\cup R)$. For each $i\in [N]$, let $H'_i$ be a sub-hypergraph of $H_i$ such that $E(H_i') = \{\{u, v, w\}\subset E(H_i): uv, vw, wu\in E(G')\}$. Then $H_i'^{(2)} = G'$, so $\delta(H_i'^{(2)}) \geq (1 - 3\ve)n$. By $(g)$ in \Cref{clm:local-cleaner}, the number $|E(G')|$ is divisible by $3$.
    By \Cref{lem:pseudorandom-STS}, for every $i\in [N]$, there is $\left(1, 0, \frac{\log^2 n}{n} \right)$-pseudorandom $STS$ in $H'_i$. Let $Z$ be a subset of $[N]\setminus Y$ such that $[N]\setminus Z \subseteq X_9$ and $|Z| = \frac{|E(G')|}{3}$.
    We now consider an auxiliary $4$-uniform hypergraph $\widetilde{H}$ on the vertex set $E(G')\cup Z$ with the edge set $E(\widetilde{H}) = \{\{i\} \cup e: i\in Z, e\in E((H_i')_{aux})\}$.

    \begin{claim}\label{clm:color-hypergraph}
        $\widetilde{H}$ has a $(1, 0, \frac{\log^2 n}{n})$-pseudorandom fractional matching.
    \end{claim}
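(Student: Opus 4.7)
The plan is to construct the desired fractional matching on $\widetilde{H}$ by running \Cref{lem:pseudorandom-STS} on each $H_i'$ for $i \in Z$, translating the resulting fractional $STS$'s into fractional matchings on the auxiliary hypergraphs $(H_i')_{aux}$ via \Cref{obs:correspondence}, and then combining them with a common scaling factor chosen so that both the $G'$-edge vertices and the color vertices of $\widetilde{H}$ end up with weighted degree exactly $1$.

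First I would verify that \Cref{lem:pseudorandom-STS} applies to each $H_i'$ with $3\ve$ playing the role of $\ve$. Since $G' = G \setminus (G[U] \cup R)$, every vertex of $V$ loses at most $|U| + \max_u d_R(u) \le \ve n + 2\mu n \le 3\ve n$ neighbors in passing from $G$ to $G'$ (using property $(f)$ of \Cref{clm:local-cleaner}), so $\delta(H_i'^{(2)}) = \delta(G') \ge (1 - 3\ve)n$. Similarly, for any $e = \{u,v\} \in E(G')$, the vertices of $N_{H_i}(e)$ that are lost when restricting to triangles of $G'$ are precisely those $w$ with $uw$ or $vw$ in $G[U] \cup R$, of which there are at most $4\ve n$. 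Hence $\delta_{ess}(H_i') \ge (\delta + 10\ve)n - 4\ve n \ge (\delta_f(3\ve) + 3\ve)n$ as required, and \Cref{lem:pseudorandom-STS} yields a $(1, 0, \frac{\log^2 n}{n})$-pseudorandom fractional $STS$ $\phi_i$ of $H_i'$. By \Cref{obs:correspondence}, each $\phi_i$ corresponds to a $(1, 0, \frac{\log^2 n}{n})$-pseudorandom fractional matching $\psi_i$ on $(H_i')_{aux}$.

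Next I would define
\[
\widetilde{\psi}(\{i\} \cup e) \defeq \frac{3}{|E(G')|}\, \psi_i(e) \qquad \text{for all } i \in Z,\ e \in E((H_i')_{aux}).
\]
Since $|Z| = |E(G')|/3$, the weighted degree of a $G'$-edge vertex $e \in V(\widetilde H)$ is $\sum_{i \in Z} \frac{3}{|E(G')|} d^{\psi_i}(e) = \frac{3|Z|}{|E(G')|} = 1$, and the weighted degree of a color vertex $i \in Z$ is $\frac{3}{|E(G')|} \sum_{e} \psi_i(e) = 1$ by the handshake identity $3\sum_e \psi_i(e) = \sum_v d^{\psi_i}(v) = |E(G')|$ (using that $(H_i')_{aux}$ is $3$-uniform and all its weighted degrees are $1$).

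Finally, for the codegree condition, any pair of color vertices lies in no edge of $\widetilde{H}$; any pair $\{e, i\}$ consisting of a $G'$-edge and a color has weighted codegree $\frac{3}{|E(G')|} d^{\psi_i}(e) = \frac{3}{|E(G')|}$, which is far smaller than $\frac{\log^2 n}{n}$; and any pair $\{e_1, e_2\} \subseteq E(G')$ lies in at most one edge of $(H_i')_{aux}$ for each color $i$ by linearity, contributing weighted codegree at most $\sum_{i \in Z} \frac{3}{|E(G')|} \|\psi_i\| \le \frac{3|Z|}{|E(G')|} \cdot \frac{\log^2 n}{n} = \frac{\log^2 n}{n}$. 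Thus $\widetilde{\psi}$ is the desired pseudorandom fractional matching. The main technical hurdle is the first step---propagating the essential-codegree condition from $H_i$ to $H_i'$ through the restriction to triangles of $G'$---after which the rest of the argument is a clean algebraic combination of the per-color fractional matchings.
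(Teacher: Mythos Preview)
Your proposal is correct and follows essentially the same route as the paper: obtain a $(1,0,\frac{\log^2 n}{n})$-pseudorandom fractional matching $\psi_i$ on each $(H_i')_{aux}$, then average them with weight $\tfrac{1}{|Z|} = \tfrac{3}{|E(G')|}$ to get $\widetilde{\psi}$, and verify the degree and codegree conditions exactly as you do. The only cosmetic differences are that the paper writes $\widetilde{\psi}$ as $\tfrac{1}{|Z|}\sum_i \widetilde{\psi}_i$ and bounds the $\{e_1,e_2\}$-codegree via $\Delta_2^{\psi_i}$ rather than $\|\psi_i\|$ (which coincide since $(H_i')_{aux}$ is linear); also, your first paragraph re-derives the applicability of \Cref{lem:pseudorandom-STS} to $H_i'$, which in the paper is recorded immediately \emph{before} the claim rather than inside its proof.
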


    \begin{claimproof}
        By \Cref{obs:correspondence}, there exists a $(1, 0 ,\frac{\log^2 n}{n})$-pseudorandom fractional matching $\psi_i$ in $(H_i')_{aux}$ for each $i \in Z$. We extend $\psi_i$ to $\widetilde{\psi}_i: E(\widetilde{H})\to [0, 1]$ in such a way that $\widetilde{\psi}_i(e) = \psi(e\setminus \{i\})$ whenever $i\in e$ and $\widetilde{\psi}_i(e) = 0$ otherwise. Let $\widetilde{\psi} \defeq \frac{1}{|Z|}\sum_{i\in Z}\widetilde{\psi}_i$. We now claim that $\widetilde{\psi}$ is a $(1, 0, \frac{\log^2 n}{n})$-pseudorandom fractional matching for $\widetilde{H}$.
        For every $e\in E(G') \subseteq V(\widetilde{H})$, we have $d^{\widetilde{\psi}}(e) = \frac{1}{|Z|}\sum_{e\subset f\in E(\widehat{H})} \widetilde{\psi}(f) = \frac{1}{|Z|}\sum_{i\in Z} \sum_{e\subset f'\in E((H_i')_{aux})} \psi_i(f') = \frac{1}{|Z|}|Z| = 1$. Meanwhile, for every $i\in Z$, the inequality $d^{\widetilde{\psi}}(i) = \frac{1}{|Z|} \sum_{e\subset f'\in E((H_i')_{aux})} \psi_i (f') = \frac{1}{|Z|}\frac{|E(G')|}{3} = 1$. Thus, $\widetilde{\psi}$ is a perfect fractional matching. 
        Let $e\neq e' \in E(G)$ and $i\neq j \in Z$. Then we have $d_2^{\widetilde{\psi}_i}(\{e, e'\}) = d_2^{{\psi}_i}(\{e, e'\}) \leq \frac{\log^2 n}{n}$. This implies $d_2^{\widetilde{\psi}}(\{e, e'\}) \leq  \frac{1}{|Z|}|Z|\frac{\log^2 n}{n} \leq \frac{\log^2 n}{n}$. We note that there is no hyperedge containing both $i$ and $j$. Finally, $d_2^{\widetilde{\psi}}(\{e, i\}) = \frac{1}{|Z|} d^{\widetilde{\psi}_i}(e) = \frac{1}{|Z|} \leq \frac{\log^2 n}{n}$. Thus, $\widetilde{\psi}$ is $(1, 0, \frac{\log^2 n}{n})$-pseudorandom fractional matching. This proves the claim.
    \end{claimproof}

    We note that $\widetilde{H}$ is a simple hypergraph,  thus, $\Delta(\widetilde{H}) \leq |Z|\binom{n}{2} \leq \frac{n^4}{12}$. By \Cref{lem:matching-to-hypergraph}, there is a multi-$4$-graph $\widehat{H}$ on the vertex set $E(G')\cup Z$ such that $\widehat{H}$ is a $(n^5, \frac{1}{n}, \frac{\log^2 n}{n})$-pseudorandom hypergraph and $\widehat{H}_{\mathrm{simp}} \subseteq \widetilde{H}$. 
    We claim that $\widehat{H}$ contains a matching $M_a$ such that $X_1 \subseteq V(M_a)$ and $|E(M_a)| = |X_1|$. Assume that $M$ is a matching in $\widehat{H}$ such that $|M \cap X_1| = i$ and $|E(M)| = i$ with maximum possible $i$. If $i < |X_1|$, choose a vertex $x\in X_1 \setminus M$ and consider the hypergraph $\widehat{H} - V(M)$. Since the maxmimum codegree of $\widehat{H}$ is bounded above by $n^4 \log^2 n$ and $i\leq \ve n^{1/4}$, the mimumum $1$-degree of $\widehat{H} - V(M)$ is at least $n^5 - n^4 - n^4\log^2 n > 0$. Thus, we can choose an edge $e_{i}$ in $\widehat{H} - V(M)$ that contains $x$. Then $M' = M \cup \{e\}$ contradicts the maximality of $i$.  Thus, we have $i = |X_i|$ and we obtain a matching $M_a = M$ such that $X_1 \subseteq V(M_a)$ and $|E(M_a)| = |X_1| \leq \ve n^{1/4}$.   

    Let $\widehat{H'}$ be the hypergraph $\widehat{H} - V(M_a)$. Since $|E(M_a)| \leq \ve n^{1/4}$ and the maximum codegree of $\widehat{H}$ is at most $n^4 \log^2 n$, the hypergraph $\widehat{H'}$ is a $(n^5, \frac{1}{\sqrt{n}}, \frac{\log^2 n}{n})$-pseudorandom hypergraph.
    Let $E_{v_i} \defeq N_{G'}(v_i) \setminus V(M_a)$.
    We note that for all $i\in [n]$, we have $n^{2/3} \leq |E_{v_i}| \leq n$. 
    By distributing vertices in $Z\cap X_9$ to $X_2, \dots, X_8$ in an appropriate way, we may assume $|X'_i| = \max\{\lceil 10n^{1/2}\rceil, |X_i| \}$ such that $X_i \subseteq X'_i$ for all $2 \leq i \leq 8$. 
    Let $\calF = \{X'_2, X'_3, \dots, X'_8, E_{v_1}, \dots, E_{v_{n}}\}$. We now apply \Cref{lem:pseudorandom-alon-yuster-type} on $\widehat{H'}$ and $\calF$ with parameters $4, n^5, \frac{2\log^2 n}{n}, \frac{\log^2 n}{n}, 1, \frac{1}{2}, \frac{1}{10}, 1$ playing the roles of $k, D, \tau, \delta, \mu, \ve, \gamma, K$, respectively. Then there exists a matching $M_b$ in $\widehat{H'}$ such that $|X'_i \setminus V(M_b)| \leq \frac{\ve^3}{10}|X'_i|n^{-1/4}$ for each $2\leq i \leq 8$. Thus, we have $|X_i \setminus V(M_b)| \leq \ve (\ve n)^{(i-1)/4}$. We also have $|E_{v_i}\setminus V(M_b)| \leq (\ve n)^{3/4}$. Let $M = M_a\cup M_b$. We observe that for every $e\in M$, if we let $e' = e \setminus Z$ and $i = e\cap Z$, then $e'$ is a triangle of $G$ and it comes from $H_i$. 
    
    Let $T_1$ be a $3$-graph on $V$ and $W' \subseteq Z$ such that $$E(T_1) = \{\{x, y, z\}\in V: \text{ there is } i\in Z \text{ such that } \{xy, yz, zx, i\} \in E(M)\},$$ $$W' \defeq \{i\in Z: \text{ there is } x, y, z\in V \text{ such that } \{xy, yz, zx, i\} \in E(M)\}.$$ Then each hyperedges in $T_1$ comes from $H_i$ for different $i\in Z$ and the maximum codegree of $T_1$ is at most $1$. Moreover, $X_1 \subseteq W'$, $|X_i\setminus W'| \leq \ve (\ve n)^{(i-1)/4}$ for each $2 \leq i \leq 9$. Let $G'' \defeq G'\setminus T_1^{(2)}$. Then we have $\Delta(G'') \leq (\ve n)^{3/4} = |U|^{3/4}$. For each $e = v_iv_j \in E(G''[V\setminus U])$, we will choose a vertex $u_{e}\in A_i\cap A_j$ such that $\{v_i, v_j, u_{e}\}\in E(L)$ and $u_e \neq u_{e'}$ whenever $e\cap e' \neq \emptyset$ for all $e'\in E(G''[V\setminus U])$. Since $2\Delta(G'') \leq 2|U|^{3/4}$ and by $(e)$ in \Cref{clm:local-cleaner}, we have $|N_L(\{v_i, v_j\}) \cap A_i \cap A_j| > 2|U|^{3/ 4}$ for all $1 \leq i \neq j \leq n'$, we can choose such $u_e$ greedily for all $e\in E(G''[V\setminus U])$.
    
    We denote as $T_2$ the hypergraph on the vertex set $V$ and the edge set $$E(T_2) \defeq \{e\cup \{u_e\}: e\in E(G''[V\setminus U])\}.$$ Let $R' \defeq (R \cup G'')\setminus T_2^{(2)}$. Observe that for every $i\in [n']$, we have $|N_{R'}(v_i)| \leq |A_i| + |U|^{3/4}$. Let $A'_i \defeq N_{R'}(v_i)$ for each $i\in [n']$. We also note that since $G$ was a $K_3$-divisible graph and $(T_1\cup T_2)^{(2)}$ is a $K_3$-divisible graph, the degree of $v_i$ of $R'$ is even for every $i\in [n']$. This implies that $|A'_i|$ is even for each $i\leq [n']$. For each $i\in [n']$, we define a graph $F_i$ on $U$ such that $$E(F_i) = \{e\in \binom{U}{2}\setminus \binom{U'}{2}: e\cup \{v_i\} \in E(L)\}$$ for each $i\in [n']$. Since $|A_i| - |U|^{3/4} \leq |A'_i| \leq |A_i| + |U|^{3/ 4}$, by $(a)$ and $(d)$ in \Cref{clm:local-cleaner}, we have $|A'_i| \geq \frac{\ve^{10}|U|}{3}$ and $\delta(F_i[A'_i]) \geq \frac{3|A'_i|}{5}$ for each $i\in [n']$. We also observe that by $(c)$ in \Cref{clm:local-cleaner}, the inequality $|A'_i \cap A'_j| \leq 2\ve^{20}|U| + 2|U|^{3/4} \leq 3\ve^{20}|U|$ holds for all $1 \leq i \neq j \leq n'$. Lastly, by $(g)$ in \Cref{clm:local-cleaner} and the fact that $\Delta(G'') \leq |U^{3/4}|$, every $u\in U$ contained in at most $2\ve^{10}|U| + |U|^{3/4} \leq 3\ve^{10}|U|$ of the $A'_i$'s. By applying \Cref{lem:absorption-matching} with parameter $2\ve^{9}$ playing a role of $\mu$, we obtain pairwise edge-disjoint matchings $J_1, \cdots J_{n'}$, where $J_i$ is a perfect matching in $F_i[A'_i]$ for each $i\in [n']$.
    
    Let $T_3$ be a $3$-graph on the vertex set $V$ such that $$E(T_3) \defeq \{e_i\cup \{v_i\}: i\in [n'],\text{ } e_i\in E(J_i)\}.$$ Let $T = T_1 \cup T_2 \cup T_3$. We note that $T_2\cup T_3 \subseteq L$ and $|E(T_2 \cup T_3)| \leq \ve^5 n^2$, by the definition of the hypergraph $L$, for each $e\in T_2\cup T_3$, we choose $Y_e\in Y$ greedily from $Y$ such that $Y_e \neq Y_{e'}$ for each $e, e'\in E(T_2\cup T_3)$ and $e\in E(H_{Y_e})$ whenever $e \neq e'$. Let $W'' \defeq \{Y_e\in Y: e\in E(T_2\cup T_3)\}$. We define $W \defeq W' \cup W''$. 
    
    We now claim that $T$ and $W$ are a desired hypergraph and a desired subset of $[N]$.
    By the construction of $T$, the maximum codegree of $T$ is $1$.
    Obviously, $(D5)$ of \Cref{lem:cover-down} holds for $T$ and $W$ by the construction above. Since for each $i\in [n']$, the graph $F_i$ does not containing any element of $\binom{U'}{2}$ as an edge, the graph $T^{(2)}$ is a subgraph of $G\setminus G[U']$. We observe that by the construction of $T$, the graph $T^{(2)}$ contains $G\setminus G[U]$. This implies that $(D2)$ of \Cref{lem:cover-down} holds.
    We note that we already proved that $X_1 \subset W'$ and $|X_i \setminus W'| \leq \ve (\ve n)^{(i-1)/4}$ for each $2\leq i \leq 8$. Since $|W| = |E(T)| \geq \frac{E(G\setminus G[U])}{3}$, we have $|X_9\setminus C| \leq |[N]\setminus C| \leq \frac{\ve^3 n^2}{2} \leq \ve (\lfloor \ve n \rfloor)^2$. Thus, $(D1)$ of \Cref{lem:cover-down} holds.
    By $(f)$ of \Cref{clm:local-cleaner}, we have $\Delta((T_2\cup T_3)^{(2)}[U]) \leq \Delta(G'') + 2\ve^{10}n \leq |U|^{3/4} + 3\ve^{9}|U| \leq \frac{\ve}{2}|U'|$.
    We observe that the followings $$\delta((G\setminus T^{(2)})[U]) = |U| - 1 - \Delta((T_2\cup T_3)^{(2)}[U]),$$ $$d_{(G\setminus T^{(2)})[U]}(u; U') \geq |U'| - 1 - \Delta((T_2\cup T_3)^{(2)}[U])$$ holds for all $u\in U$. Since $\Delta((T_2\cup T_3)^{(2)}[U]) \leq \frac{\ve}{2}|U'|$ holds, $(D3)$ and $(D4)$ of \Cref{lem:cover-down} hold. This completes the proof.
\end{proof}


\section{Proof of \Cref{thm:main}}\label{sec:proof}

We are now ready to prove \Cref{thm:main}.
Let $\calH = \{H_1, \dots, H_{\frac{n(n-1)}{6}}\}$ be a collection of $3$-graph on a common vertex set $V$.
To write the proof \Cref{thm:main} more clearly, the following definitions would be useful.
    
\begin{definition}
    Let $U\subseteq V$ and let $C\subseteq [\binom{n(n-1)}{6}]$.
    We say that a $3$-graph $F$ on the vertex set $U$ is \emph{$C$-rainbow} if there is a bijective function $\phi: E(F)\to C$ such that $e\in E(H_{\phi(e)})$ holds for every $e\in E(F)$.
\end{definition}

\begin{definition}
    For a subset $U\subseteq V$, a graph $G$ on the vertex set $U$, and $C\subset [\binom{n(n-1)}{6}]$ and a collection $\calH = \{H_i: i\in [\frac{n(n-1)}{6}]\}$, we denote by $\calH[U; G; C]$ a collection of hypergraphs $\{H'_i: i\in C\}$ such that $V(H'_i) = U$ and $E(H'_i) = \{\{x, y, z\}\in E(H_i[U]): xy, yz, zx \in E(G)\}$ for each $i\in C$.
\end{definition}

\begin{proof}[Proof of \Cref{thm:main}]
    For given $\ve > 0$, we fix the parameters $m'$ and $n$ as follows.
    $0 < \frac{1}{n} \ll \frac{1}{m'} \ll \ve < 1$, where $n \equiv 1 \text{ or } 3 \pmod{6}$.
    Let $\delta \defeq \max\{\frac{3}{4}, \delta_f(3\ve)\}$
    and $\calH = \{H_1, \dots, H_{\frac{n(n-1)}{6}}\}$ be a collection of $3$-graph on the common vertex set $V$ with size $n$ and for each $i\in [\frac{n(n-1)}{6}]$, we have $\delta_2(H_i) \geq (\delta + 100\ve)n$. Since $\delta_f(\gamma)$ goes to $\delta_f^*$ monotonically non-increasing as $\gamma$ goes to $0$, in order to prove \Cref{thm:main}, by readjusting the parameter $\ve$, it suffice to prove that there is a $\calH$-transversal $STS$ on $V$. Let $K$ be the complete graph on the vertex set $V$.

    \textbf{$\bullet$ Obtaining a transversal-vortex.} Let $m$ be an integer such that $\ve m' \leq m \leq m'$ and let $\ell \defeq \ell(n; m')$. We denote by $n_0, n_1, \dots, n_{\ell}$ the integers such that $n_0 = n$, $n_{i} = \lfloor \ve n_{i-1} \rfloor$ for each $i\in[\ell]$, and $n_{\ell} = m$. Let $(D_0, D_1, \dots, D_{\ell-1})$ be the partition of $[\frac{n(n-1)}{6}]$ with $|D_i| = \frac{1}{3}\left(\binom{n_i}{2} - \binom{n_{i+1}}{2}\right)$ for all $0\leq i \leq \ell-2$ and $|D_{\ell-1}| = \frac{1}{3}\binom{n_{\ell-1}}{2}$. For the simplicity of the notations, we set $D_i = \emptyset$ whenever $i < 0$. We let $C_i \defeq \bigcup_{i-10 \leq j \leq \ell-1} D_j$ for each $0\leq i \leq \ell-1$. Then $\calC = \{C_0 \supseteq C_1 \supseteq \cdots \supseteq C_{\ell -  1}\}$ is a $(\binom{n(n-1)}{2}, \ve)$-color-vortex. By \Cref{lem:vortex-transversalSTS}, $\calH$ has a $(\delta + 99\ve, \ve, m)$-transversal-vortex $U_0 \supseteq U_1 \supseteq \cdots \supseteq U_{\ell}$ with respect to $\calC$.

    \textbf{$\bullet$ Placing a color-absorber.} Let $\calT$ be the collection of all $K_3$-divisible graphs on $U_{\ell}$. Then for each $T\in \calT$, \Cref{lem:sparse-K3-absorber} implies that there exists a $K_3$-absorber $A_T$ for $T$ whose $T$-rooted edge-degeneracy is at most $4$. Let $$t \defeq \max\{|V(A_T)| - |V(T)|: T\in \calT\},$$ $$a_T \defeq \frac{1}{3}|E(A^{(2)}_T \setminus T)|,$$ $$b_T \defeq \frac{1}{3}|E(T)|, \text{ and}$$ $$\calP \defeq \{(T, Z): T\in \calT, Z\subseteq \binom{C_{\ell - 1}}{b_T}\}.$$
    Observe that the $|\calP|$ is at most $2^{\binom{m}{2}}2^{|C_{\ell - 1}|} \leq \frac{1}{(t+m)^3}\ve^{100}n$. Now, for each $(T, Z)\in \calP$, we assign a set $W(T, Z) \subseteq D_0$ of numbers with $|W(T, Z)| = a_T$ in such a way that the sets $W(T, Z)$ are pairwise disjoint. Indeed, this is possible because $\sum_{(T, Z)\in \calP}a_T \leq (t + m)^3 |\calP| \leq \ve^{100}n \leq |D_0|$. With these choices, the following properties hold for all $(T, Z) \neq (T', Z') \in \calP$. 

    \begin{enumerate}
        \item[$(i)$] $A_{(T, Z)}$ is a $K_3$-absorber for $T$ and has $V(T)$-rooted edge-degeneracy at most $4$,
        
        \item[$(ii)$] $V(A_{(T, Z)}) \setminus V(T) \subseteq U_0\setminus U_1$,

        \item[$(iii)$] $(A_{(T, Z)})^{(2)}\setminus T^{(2)}$ and $(A_{(T, Z)})^{(2)}$ has both $W(T, Z)$-rainbow and $(W(T, Z)\cup Z)$-rainbow $K_3$-decompositions in $A_{(T, Z)}$, respectively,

        \item[$(iv)$] $(V(A_{(T, Z)}) \setminus V(T)) \cap (V(A_{(T', Z')}) \setminus V(T')) = \emptyset$.
        
    \end{enumerate}

    Let $A_0\defeq \bigcup_{(T, Z)\in \calP} (A_{(T, Z)})^{(2)}\setminus T^{(2)}$ and $W_0 \defeq \bigcup_{(T, Z)\in \calP} W(T, Z)$. We note that $|V(A_0)| \leq \ve^{100}n$. Then by $(i)$, the graph $A_0[U_1]$ has no edge. Moreover, we have the following by $(i)-(iv)$.

    \begin{equation}\label{prop:color-absorbing}
        \text{For all $T\in \calT$ and for all $Z\subseteq \binom{C_{\ell-1}}{b_T}$, the graph $A_0\cup T$ has a $(W_0\cup Z)$-rainbow $K_3$-decomposition.}\tag{$\clubsuit$}
    \end{equation}
    
    We note that \eqref{prop:color-absorbing} directly followed from $(i)-(iv)$. Let $G_0 \defeq K\setminus A_0$ and let $D'_0 \defeq D_0\setminus W_0$.  Obviously, $A_0$ has a $W_0$-rainbow $K_3$-decomposition by \eqref{prop:color-absorbing}. We note that $G_0$ is a $K_3$-divisible graph and $G_0[U_1]$ is a complete graph.

    \textbf{$\bullet$ Iteratively applying \Cref{lem:cover-down}.} 
    Let $S_0$ be an empty set and $S_1\subseteq D_1$ be an arbitrary subset such that $$\frac{|E(G\setminus G[U_1]])|}{3} + \ve^4 n^2 \leq |D'_0| + |S_1| \leq \frac{|E(G\setminus G[U_1]])|}{3} + \frac{\ve^3 n^2}{2}.$$ 
    Since $|V(A_0)| \leq \ve^{100}n$,  we have $\delta(G_0) \geq (1 - \ve)n$ and $d_{G_0}(v; U_1) \geq (1 - 2\ve)|U_1|$. We now initialize $X^{(0)}_j = \emptyset$ for all $1\leq j \leq 8$ and let $X^{(0)}_9 = D'_0 \cup S_1$. Since $\delta(G_0) \geq (1 - \ve)n$, for every hypergraph $H\in \calH[U_0; G_0; D'_0\cup S_1]$, we observe that $H^{(2)} = G$ and $\delta_{\mathrm{ess}}(H) \geq (\delta + 90\ve)n_0$ and $d_H(e; U_1) \geq (\delta + 80 \ve)n_1$ for all $e\in E(G_0)$. Then we can apply \Cref{lem:cover-down} to $\calH[U_0; G_0; D'_0\cup S_1]$, $U_0$, $U_1$, $G_0$, $X^{(0)}_1, \dots, X^{(0)}_9$ which playing the roles of $\calH$, $V$, $U$, $G$, $X_1\cdots, X_9$, respectively. We claim that we can iteratively apply \Cref{lem:cover-down} to cover the edges of $G_0$ until all the remaining uncovered edges are contained in $U_{\ell}$.

    Assume for some $0\leq i \leq \ell - 1$, we have $D'_i$, $S_i$, $A_i$, $W_i$, $X^{(i)}_1, \cdots, X^{(i)}_8$, where $S_i\subseteq D_i$, $D'_i = D_i\setminus S_i$, that satisfy the followings.
    \begin{enumerate}
        \item[$(a)$] $\bigcup_{j \leq i - 9} D_j \subseteq W_i \subseteq \bigcup_{j \leq i} D_j$,
        \item[$(b)$] $A_0$ and $A_i$ are edge-disjoint and $K\setminus K[U_{i}] \subseteq A_i$,
        \item[$(c)$] there is a $W_i$-rainbow $K_3$-decomposition of $A_i$,
        \item[$(d)$] let $G_i \defeq (K \setminus A_i)[U_{i}]$ then $\delta(G_i) \geq (1 - \ve)n_i$, 
        \item[$(e)$] $G_i[U_{i+1}]$ is a complete graph,
        \item[$(f)$] $S_i \subseteq W_i \cup X^{(i)}_8$ and $D'_i$ is disjont from $W_i\cup \bigcup_{j\in [8]} X^{(i)}_j$,
        \item[$(g)$] $|X^{(i)}_j| \leq \ve n_i^{j/4}$ for all $j\in [8]$ and for each $k\in [8]$, the set $D_{i - k} \setminus W_i$ is contained in $X^{(i)}_{9-k}\cup X^{(i)}_{8-k}$ (we may consider $X^{(i)}_0 = \emptyset$).
    \end{enumerate}
    Since $U_0 \supseteq U_1 \supseteq \cdots \supseteq U_{\ell}$ is a $(\delta + 99\ve, \ve, m)$-transversal-vortex with respect to $\calC$, together with $(a)$ and $(d)$, for all hypergraph $H\in \calH[U_{i};G_i;C_i]$, we have $H^{(2)} = H_i$ and its essential minimum codegree is at least $(\delta + 90\ve)n_i$. Moreover, we have $d_{H}(e; U_{i+1}) \geq (\delta + 80\ve)n_{i+1}$ for all $e\in E(G_i)$.
    By $(c)$, the graph $G_i$ is $K_3$-divisible. By $(a), (c), (f)$ and by our choice of $D_i$, the following inequality $$\frac{1}{3}|E(G_i \setminus G_i[U_{i+1}])| - 1 \leq |D'_i| + \sum_{j\in [8]}|X^{(i)}_j| \leq \frac{1}{3}|E(G_i \setminus G_i[U_{i+1}])| + 1$$ holds. 
    Thus, we can choose a set $S_{i+1} \subseteq D_{i+1}$ such that $2\ve^4 n_i^2 \leq |S_{i+1}| \leq \frac{\ve^3 n_i^2}{3}$. Let $X^{(i)}_9 \defeq D'_i \cup S_{i+1}$. Then by applying \Cref{lem:cover-down} to $\calH[U_i;G_i;\bigcup_{j\in [9]} X^{(i)}_j]$, we obtain a set $W'_{i+1} \subseteq \bigcup_{j\in [9]} X^{(i)}_j$ and a subgraph $A'_{i+1}$, which is contained in $G_i\setminus G_i[U_{i+1}]$ and contains $G_i \setminus G_i[U_{i+1}]$ as a subgraph, satisfying the following. 

    \begin{enumerate}
        \item[$(1)$]  $X^{(i)}_1 \subseteq W'_{i+1}$ and $|X^{(i)}_j \setminus W'_{i+1}| \leq \ve n_{i+1}^{(j-1)/4}$ for each $2 \leq j \leq 9$,
        \item[$(2)$] $A'_{i+1}$ has a $W'_{i+1}$-rainbow $K_3$-decomposition,
        \item[$(3)$] we have $\delta(G_i\setminus A'_{i+1}[U_{i+1}]) \geq (1 - \ve)n_{i+1}$ and $d_{(G_i\setminus A'_{i+1})[U]}(u; U_{i+1}) \geq (1 - 2\ve)n$ for all $u\in U_i$.
    \end{enumerate}
    
    We now define $$W_{i+1}\defeq W_i\cup W'_{i+1},$$ $$A_{i+1} \defeq A_i\cup A'_{i+1},$$ $$D'_{i+1} \defeq D_{i+1} \setminus S_{i+1}.$$ We also update the set $X^{(i+1)}_j = X^{(i)}_{j+1} \setminus W'_{i+1}$ for each $j\in [8]$. Then $D'_{i+1}, S_{i+1}, A_{i+1}, W_{i+1}, X^{(i+1)}_1, \dots, X^{(i+1)}_8$ satisfies all $(a)$ to $(g)$, where $i+1$ plays a role of $i$. 
    
    Iterating this procedure yields a graph $A_{\ell}$ and a set $W_{\ell}$ and a $W_{\ell}$-rainbow $K_3$-decomposition on $A_{\ell}$ such that $K\setminus K[U_{\ell}] \subseteq A_{\ell}$ and $[\frac{n(n-1)}{6}]\setminus W_{\ell} \subseteq C_{\ell - 1}$.

    \textbf{$\bullet$ Absorbing the remaining structures and colors.} We note that $A_{\ell} = A_0 \cup \left(\bigcup_{i\in [\ell]} A'_{i}\right)$ and the sets $A_0, A'_1, \dots, A'_{\ell}$ are pairwise edge-disjoint. We also remark that $(W_0, W'_1, \dots, W'_{\ell})$ is a partition of $W_{\ell}$ and for each $i\in [\ell]$, there is a $W'_i$-rainbow $K_3$-decomposition on $A'_{i}$. This implies that there is a $(W_{\ell} \setminus W_0)$-rainbow $K_3$-decomposition on $A_{\ell} \setminus A_0$.
    Let $T \defeq K\setminus A_{\ell}$ and let $Z \defeq [\frac{n(n-1)}{6}] \setminus W_{\ell}$. As $(T, Z)\in \calP$, \eqref{prop:color-absorbing} implies that there is a $(W_0\cup Z)$-rainbow $K_3$-decomposition on $A_0\cup T$. Together with the $(W_{\ell} \setminus W_0)$-rainbow $K_3$-decomposition on $A_{\ell} \setminus A_0$, we obtain a desired $\calH$-transversal $STS$.
\end{proof}


\section{Concluding remarks}\label{sec:concluding}

In this article, we obtained an upper bound of the minimum codegree threshold not only for the existence of $STS$ but also for the existence of transversal $STS$. We also prove that the lower bound for the threshold is at least $\frac{3}{4}n - 26$ and we believe that the correct threshold would be $\frac{3}{4}n + C$ for some constant $C$. Thus, we leave it as a conjecture.

\begin{conjecture}\label{conj:3/4}
    There is a constant $n_0$ and $C$ such that the following holds for all $n\geq n_0$ satisfying $n \equiv 1 \text{ or } 3 \pmod{6}$.
    Let $\calH = \{H_1, \dots, H_{\frac{n(n-1)}{6}}\}$ be the collection of $3$-graphs on the common $n$-vertex set $V$. Assume for each $i\in [\frac{n(n-1)}{6}]$, the minimum codegree of $H_i$ is at least $\frac{3}{4}n + C$. Then there is a $\calH$-transversal $STS$ on $V$.
\end{conjecture}

\Cref{thm:main} implies that, in order to prove an asymptotic version of \Cref{conj:3/4}, it suffices to show $\delta^*_f \leq \frac{3}{4}$. We further conjecture that the same minimum codegree condition implies the existence of the resolvable $STS$ and the transversal copy of resolvable $STS$. Here, the resolvable $STS$ is a $STS$ that all triples in the system can be perfectly decomposed into perfect matchings. The resolvable $STS$ exists on $n$ vertices if and only if $n \equiv 3 \pmod{6}$~\cite{ray1971solution}. 

\begin{conjecture}
    There is a constant $n_0$ and $C$ such that the following holds for all $n\geq n_0$ satisfying $n \equiv 3 \pmod{6}$.
    Let $H$ is a hypergraph on $n$ vertices with minimum codegree at least $\frac{3}{4}n + C$. Then $H$ contains a resolvable $STS$.
\end{conjecture}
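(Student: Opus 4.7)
The plan is to adapt the iterative absorption framework developed for \Cref{thm:main} to the resolvable setting. A resolvable $STS$ on $n$ vertices with $n \equiv 3 \pmod 6$ partitions the triples into $(n-1)/2$ \emph{parallel classes}, each a vertex-disjoint family of $n/3$ triples covering $V$. I would view this as a transversal-type problem: take $(n-1)/2$ copies of $H$ indexed by colors $i \in [(n-1)/2]$, and seek a $\calH$-transversal $STS$ satisfying the extra constraint that the triples of each color $i$ form a perfect matching on $V$. The target codegree is $\frac{3}{4}n + C$, matching the conjectured bound in \Cref{conj:3/4}.

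The first step is to establish a resolvable fractional relaxation. Define a perfect fractional resolvable $STS$ as a weighting $\phi: E(H) \times [(n-1)/2] \to [0, 1]$ with $\sum_{e\subseteq f,\,i} \phi(f, i) = 1$ for all $e \in \binom{V}{2}$ and $\sum_{v \in f} \phi(f, i) = 3/n$ for every vertex $v$ and color $i$. Mimicking the max-flow min-cut argument of \Cref{thm:threshold-uniformbound}, but on an auxiliary flow network whose nodes include both pairs of vertices and (vertex, color)-pairs, one should show that the fractional resolvable threshold is at most $3/4$ when $\delta_{ess}(H) \geq (\frac{3}{4} + \ve)n$; the additional flow constraints from the matching structure of each parallel class give rise to extra cut inequalities but are handled by the same rooted-$K^{(3)}_4$ flow-operations.

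Next, set up a color-vortex on $[(n-1)/2]$ and a transversal-vortex on $V$ as in \Cref{def:color-vortex,def:transversal-vortex}, then prove a resolvable analogue of the cover-down lemma \Cref{lem:cover-down}. The pseudorandom matching framework of Section 4.1 would operate on an auxiliary $4$-uniform hypergraph whose vertex set is $E(G) \sqcup ([(n-1)/2] \times V)$ and whose hyperedges $\{xy, yz, zx\} \cup \{(i, x), (i, y), (i, z)\}$ encode that the triple $\{x,y,z\}$ of color $i$ simultaneously covers the three pairs and uses three color-vertex slots; a pseudorandom perfect matching here yields a partial resolvable transversal. The essential difficulty is that each vertex now has bounded capacity in each color (at most one triple of color $i$ through $v$), so the $\eta$-flow regularization of \Cref{lem:frac to int matching} must track both codegrees and color-vertex degrees simultaneously.

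Finally, the exclusive absorber construction in Section 4.2 must be strengthened to a \emph{resolvable absorber}: for every $K_3$-divisible graph $G$ on $U_\ell$ together with a ``color deficiency profile'' recording how many triples of each unused color are still needed, build a sparse hypergraph $A$ with small $G$-rooted edge-degeneracy that can absorb $G$ into a partial resolvable decomposition realizing any admissible profile. The transformer gadgets of \Cref{lem:transformer-exist} handle the edge structure; to handle colors one additionally needs \emph{color transformers} that swap which color a triple belongs to while preserving the perfect matching property. I expect this to be the main obstacle, since any such gadget must simultaneously contain vertex-disjoint triples per color and admit two $K_3$-decompositions differing exactly in a prescribed color swap; constructing such gadgets with bounded edge-degeneracy is substantially more delicate than the cycle-based transformer in \Cref{lem:transformer-exist}. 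Once both the cover-down lemma and the resolvable absorber are in place, the iterative absorption scheme of \Cref{sec:proof} carries over essentially unchanged, yielding an asymptotic form of the conjecture conditional on $\delta^*_f \leq \frac{3}{4}$.
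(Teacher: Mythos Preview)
The statement you are addressing is a \emph{conjecture} in the paper, appearing in the concluding remarks (\Cref{sec:concluding}) with no proof whatsoever. The paper does not claim to prove it; it lists it as an open problem alongside \Cref{conj:3/4}. So there is no ``paper's own proof'' to compare your proposal against.

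Your proposal is not a proof but a research outline, and you implicitly acknowledge this: you write ``one should show'', ``I expect this to be the main obstacle'', and ``constructing such gadgets \dots\ is substantially more delicate''. Each of the three main components you describe is genuinely open. In particular: (i) the flow argument of \Cref{thm:threshold-uniformbound} gives only $\delta_f^* \leq \tfrac{3+\sqrt{57}}{12}$, not $\tfrac{3}{4}$, and nothing in your outline explains how the extra resolvability constraints would push the threshold \emph{down} rather than up; (ii) your auxiliary hypergraph with vertex set $E(G) \sqcup ([(n-1)/2] \times V)$ has hyperedges of size seven, not four, so the pseudorandom matching machinery (\Cref{lem:pseudorandom-alon-yuster-type}) would need to be re-examined with different parameters, and the codegree structure is no longer linear; (iii) you correctly identify the resolvable absorber as the central difficulty and offer no construction. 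None of these gaps is filled in the paper either --- which is exactly why it is stated as a conjecture.
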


\begin{conjecture}
    There is a constant $n_0$ and $C$ such that the following holds for all $n\geq n_0$ satisfying $n \equiv 3 \pmod{6}$.
    Let $\calH = \{H_1, \dots, H_{\frac{n(n-1)}{6}}\}$ be the collection of $3$-graph on the common $n$-vertex set $V$. Assume for each $i\in [\frac{n(n-1)}{6}]$, the minimum codegree of $H_i$ is at least $\frac{3}{4}n + C$. Then there is a $\calH$-transversal resolvable $STS$ on $V$.
\end{conjecture}

Very recently, Zheng~\cite{zheng2024codegree} improved the upper bound of $\delta_f^*$ to $0.8579$. Together with \Cref{thm:main}, the codegree threshold of the existence of $STS$ in sufficiently large $n$-vertex $3$-graph is at most $(0.8579 + o(1))n$.

\subsection*{Acknowledgement}
The author thanks his advisor Jaehoon Kim for his helpful comments and advice. He also thanks Michael Zheng for pointing out several minor errors and typos in the early version of this paper. 

\printbibliography

\end{document}